\newcommand{\de}{\delta}
\newcommand{\ep}{\varepsilon}
\newcommand\chars[2]{\left[\begin{smallmatrix}#1\\ #2\end{smallmatrix}\right]}
\newcommand\tc[2]{\theta\chars{#1}{#2}}
\newcommand\tn{\theta_{\rm null}}
\newcommand\gn{\grad_{\rm null}}
\newcommand\Sn{\calS_{\rm null}}
\def\dim{\mathrm{dim}}
\def\ker{\mathrm{ker}}
\newcommand{\CC}{{\mathbb{C}}}
\newcommand{\HH}{{\mathbb{H}}}
\newcommand{\PP}{{\mathbb{P}}}
\newcommand{\QQ}{{\mathbb{Q}}}
\newcommand{\RR}{{\mathbb{R}}}
\newcommand{\ZZ}{{\mathbb{Z}}}
\def\MCG{\mathrm{MCG}}
\def\SY{\mathfrak{S}}
\def\calHM{\mathcal{HM}}
\newcommand{\T}{{\Theta}}
\newcommand{\calA}{{\mathcal A}}
\newcommand{\calC}{{\mathcal C}}
\newcommand{\calM}{{\mathcal M}}
\newcommand{\calJ}{{\mathcal J}}
\newcommand{\calX}{{\mathcal X}}
\newcommand{\calY}{{\mathcal Y}}
\newcommand{\calS}{{\mathcal S}}
\newcommand{\calH}{{\mathcal H}}
\newcommand{\op}{\operatorname}
\newcommand{\Sp}{\op{Sp}}
\def\calN{\mathcal{N}}
\newcommand{\Sing}{\op{Sing}}
\newcommand{\grad}{\op{grad}}
\newcommand\codim{\op{codim}}
\newcommand{\univ}[1]{\widehat{#1}}
\def\sm{\mathrm{sm}}
\def\sing{\mathrm{sing}}
\def\bm#1{\text{\boldmath$#1$}}
\renewcommand\tt[2]{\t\left[\begin{matrix}#1\\ #2\end{matrix}\right]}
\theoremstyle{plain}
\newtheorem{thm}{Theorem}[section]
\newtheorem{lm}[thm]{Lemma}
\newtheorem{prop}[thm]{Proposition}
\newtheorem{cor}[thm]{Corollary}
\newtheorem{fact}[thm]{Fact}
\newtheorem{notation}[thm]{Notation}
\newtheorem{maintheorem}{Theorem}
\newenvironment{mthm}[1]
{
\begin{maintheorem}}
{\end{maintheorem}}
\newtheorem{maincorollary}{Corollary}
\newenvironment{mcor}[1]
{
\begin{maincorollary}}
{\end{maincorollary}}
\newtheorem{mainproposition}{Proposition}
\newenvironment{mprop}[1]
{
\begin{mainproposition}}
{\end{mainproposition}}
\newtheorem{mainlm}{Lemma}
\theoremstyle{definition}
\newtheorem{dfx}[thm]{Definition}
\newtheorem{rem}[thm]{Remark}
\newtheorem*{nota}{Notation}
\newtheorem{example}[thm]{Example}
\def\CORRECT{}
\newcommand\tsup[2][2]{%
 \def\useanchorwidth{T}%
  \ifnum#1>1%
    \stackon[-.5pt]{\tsup[\numexpr#1-1\relax]{#2}}{\scriptscriptstyle\sim}%
  \else%
    \stackon[.5pt]{#2}{\scriptscriptstyle\sim}%
  \fi%
}
\def\wti#1{\widetilde{#1}}
\def\t{\vartheta}
\def\rar{\rightarrow}
\def\thra{\twoheadrightarrow}
\def\hra{\hookrightarrow}
\def\Pic{\mathrm{Pic}}
\def\lra{\longrightarrow}
\def\IRR{\mathrm{irr}}
\def\ol#1{\overline{#1}}
\def\pa{\partial}
\def\-{\text{-}}
\begin{document}

\title[Universally irreducible subvarieties of Siegel moduli spaces]{Universally irreducible subvarieties\\ of Siegel moduli spaces}

\author[G. Mondello]{Gabriele Mondello}
\address{Dipartimento di Matematica, Piazzale Aldo Moro, 2, I-00185 Roma, Italy}
\email{mondello@mat.uniroma1.it}

\author[R. Salvati Manni]{Riccardo Salvati Manni}
\address{Dipartimento di Matematica, Piazzale Aldo Moro, 2, I-00185 Roma, Italy}
\email{salvati@mat.uniroma1.it}
  
\begin{abstract}
A subvariety of a quasi-projective complex variety $X$
is called ``universally irreducible'' if its preimage inside
the universal cover of $X$ is irreducible.
%
In this paper we investigate sufficient conditions for universal irreducibility.
We consider in detail \CORRECT{complete intersection subvarieties} of small codimension
inside Siegel moduli spaces of any finite level.
Moreover we show
that, for $g\geq 3$, every Siegel modular form is the product of finitely many
irreducible analytic functions on the Siegel upper half-space $\HH_g$.
%
We also discuss the special case of singular theta series of weight $\frac{1}{2}$ and of Schottky forms.
\end{abstract}

\maketitle

\section{Introduction}

\subsection{Motivation}\label{sec:main}

The {\it{moduli space $\calA_g$ of complex principally polarized Abelian varieties of dimension $g$}} can be obtained as the quotient of the {\it{Siegel upper half-space}}
\[
\HH_g:=\left\{
\tau\in\mathrm{Mat}_{g\times g}(\CC) 
\ \text{symmetric, with $\Im(\tau)>0$}\right\}
\]
by $\Gamma_g:=\Sp_{2g}(\ZZ)$ that acts with finite stabilizers.
Hence, in the orbifold sense, 
the projection $\HH_g\rar\calA_g$ can be seen as a
universal covering space
and
every finite-index subgroup $\Gamma$ of $\Gamma_g$ determines
a finite \'etale cover $\calA_g(\Gamma):=\HH_g/\Gamma$ of $\calA_g$.
Since holomorphic line bundles on $\HH_g$ are trivializable,
sections of holomorphic line bundles (in the orbifold sense) on $\calA_g(\Gamma)$
lift to {\it{Siegel modular forms with respect to $\Gamma$}}, namely
holomorphic functions $F:\HH_g\rar\CC$ that enjoy a suitable
equivariance property with respect to the action of $\Gamma$ on $\HH_g$
(see Section \ref{ssc:siegel}).

The zero locus of a modular form with respect to $\Gamma$
determines a $\Gamma$-invariant divisor in $\HH_g$, which is the pull-back
of an effective divisor on $\calA_g(\Gamma)$.
Vice versa, for $g\geq 3$, every effective divisor in any $\calA_g(\Gamma)$ pulls back
to $\HH_g$ to the zero locus of a modular form.

A basic example of modular forms are the {\it{theta constants
with semi-integral characteristic $\chars\ep\de$}}
defined as
\[
\theta\chars\ep\de(\tau):=\sum_{n\in\ZZ^g}
\exp\,\pi i\left[ \left(n+\ep/2\right)^t \tau\left(n+\ep/2\right)+\left(n+\ep/2\right)^t\de\right].
\]
where $\ep,\de\in(\ZZ/2)^g$.
Several years ago E.~Freitag and the second named author discussed at length about the problem of factorizing such modular forms.

In \cite[Theorem 4.6]{FR}, Freitag proved that these modular forms are {\it{absolutely irreducible}}, i.e.~they are nonzero and their divisors are irreducible in $\calA_g(\Gamma)$ with respect to arbitrary small finite-index subgroups $\Gamma$ of  $\Gamma_g$. We call {\it{universally irreducible}}
a non-zero modular form that has a stronger property, namely of being
irreducible as an analytic function on $\HH_g$.

%
%



Such irreducibility properties have a geometric
counterpart.

\begin{dfx}[Absolute and universal irreducible subvarieties]
A subvariety $Z$ of $\calA_g(\Gamma)$
is {\it{absolutely irreducible}} if the preimage of $Z$
in every finite \'etale cover (in the orbifold sense) of $\calA_g(\Gamma)$ is irreducible,
and it is {\it{universally irreducible}} if the preimage of $Z$ in $\HH_g$
is irreducible.
\end{dfx}

In this paper we investigate sufficient conditions for universal irreducibility of subvarieties of $\calA_g(\Gamma)$.

\subsection{Setting and conventions}

%

For every $n\geq 2$ we let
\[
\Gamma_g(n):=\{\sigma\in\Gamma_g\ |\ \sigma\equiv I_{2g}\pmod{n}\}
\] 
denote the 
{\it{$n$-th principal congruence subgroup}} of $\Gamma_g$
and we briefly denote $\calA_g(\Gamma_g(n))$ by $\calA_g(n)$.

Now fix $n\geq 3$. The moduli space $\calA_g(n)$ is a smooth quasi-projective variety and 
$\HH_g\rar\calA_g(n)$ is its universal cover in the standard sense;
moreover, the same holds for every finite-index subgroup of $\Gamma_g(n)$.

%
%
%
%
%
%
  
Let now $\Gamma$ be any finite-index subgroup of $\Gamma_g$.
Then $\calA_g(\Gamma_g(n)\cap\Gamma)$ is a
smooth quasi-projective variety.
Thus $\calA_g(\Gamma)$ is the quotient of $\calA_g(\Gamma_g(n)\cap\Gamma)$
by the finite group $\Gamma/(\Gamma_g(n)\cap\Gamma)$.
Hence $\calA_g(\Gamma)$ always has the structure of smooth complex-analytic orbifold and of smooth Deligne-Mumford stack.

In order to have a more uniform treatment for all finite-index subgroups of $\Gamma_g$,
we will always take the orbifold point of view. This means
that the words {\it{smooth}}, {\it{singular}}, {\it{\'etale cover}}, {\it{fundamental group}} must be understood in the orbifold sense. 


\subsection{Main results}\label{ssc:main}

Let $\Gamma$ be any finite-index subgroup of $\Gamma_g$.

Our first main result is the following.
 
\begin{mthm}{A}\label{mthm:div}
Let $g\geq 3$ and let $D\subset \calA_g(\Gamma)$ be an effective divisor.
Then the preimage $\wti{D}$ in $\HH_g$ of $D$ is connected.
Moreover the following hold.
\begin{itemize}
\item[(i)]
If $D$ is locally irreducible, then $D$ is universally irreducible.
\item[(ii)]
If $D$ is absolutely irreducible, then $D$ is universally irreducible.
\end{itemize}
\end{mthm}

The connectedness claim (proven in Theorem \ref{thm:connectedness})
relies on a generalization (Theorem \ref{thm:divisor}) of the homotopical Lefschetz hyperplane section theorem for the fundamental group  of  smooth quasi-projective varieties that have a projective model with small boundary. 
We mention that, for $g=2$, connectedness of $\wti{D}$ holds for 
divisors $D$ whose closures intersect the boundary of the Satake compactification  of $\calA_g(\Gamma)$ in a finite set of points, e.g.~divisors defined by Eisenstein series.

Assertion (i) (proven in
Theorem \ref{thm:connectedness} too)
is a direct consequence of the connectedness of $\wti{D}$. 
Here we recall that normality
implies local irreducibility (see Lemma \ref{lm:irred}(iii)) and so
claim (i) also applies to normal divisors.

Finally, assertion (ii) is a special case of the following.
 
\begin{mthm}{B}\label{mthm:codim}
Let $g\geq 3$ and let $Z\subset\calA_g(\Gamma)$ be an absolutely irreducible 
\CORRECT{complete intersection subvariety} of codimension at most $g-2$.
Then $Z$ is universally irreducible.
\end{mthm}

Actually, the above Theorem \ref{mthm:codim} is a consequence of the following (see Theorem \ref{thm:ag} in the body of the paper).

\begin{mthm}{B'}\label{mthm:codim-B'}
Let $g\geq 3$ and let $Z\subset\calA_g(\Gamma)$ be any \CORRECT{complete intersection subvariety} of codimension at most $g-2$.
Then its preimage $\wti{Z}$ in $\HH_g$ has finitely many irreducible components.
\end{mthm}

 As noted by Freitag \cite[Theorem 4.7]{FR}, for $g\geq 3$
 all modular forms can be factorized 
 into absolutely irreducible ones (Lemma \ref{lm:factorization}).
Such result heavily relies on the fact that
$\mathrm{Pic}(\HH_g/\Gamma)\otimes\QQ= \QQ\cdot \lambda$
 and  the divisibility of the integral class $\lambda$  
in $\mathrm{Pic}(\HH_g/\Gamma)/\!\mathrm{tors}$
is uniformly bounded from above for every finite-index
subgroup $\Gamma$ of $\Gamma_g$
(see Section \ref{ssc:topological-Ag}).

Combining Freitag's observation with Theorem \ref{mthm:div}, we immediately have the following.

\begin{mcor}{C}\label{mcor:prime-forms}
For $g\geq 3$, every modular form is a finite product
of universally irreducible ones.
\end{mcor}

  \noindent

Even with Theorem \ref{mthm:codim} in our hands,
it is not always easy to verify whether a subvariety $Z$ of $\calA_g(\Gamma)$
is absolutely irreducible.
Thus, in our last main result, we provide a 
criterion for connectedness and irreducibility
of the preimage in $\HH_g$ of $Z$, which sometimes turns useful.
 
In order to state it, we recall that the Jacobian locus (resp.~the hyperelliptic locus)
in $\HH_g$ is the locally closed locus
of period matrices corresponding to Jacobians of smooth curves 
(resp.~of smooth hyperelliptic curves) of genus $g$.
The Jacobian and the hyperelliptic loci in $\calA_g(\Gamma)$ are the image of the 
Jacobian and hyperelliptic loci of $\HH_g$
via the projection $\HH_g\rar\calA_g$.

\begin{mprop}{D}\label{mprop:sub}
Let $Z\subset\calA_g(\Gamma)$ be an irreducible subvariety
and let $\wti{Z}$ be its preimage in $\HH_g$.
\begin{itemize}
\item[(i)]
If a Zariski-open subset $Y$ of the Jacobian locus 
is contained in $Z$, then $\wti{Z}$ is connected, with finitely many irreducible components.
Moreover, if $Y$ is not entirely contained inside the singular locus of $Z$, then $\wti{Z}$ is irreducible.
\item[(ii)]
Assume $\Gamma\subseteq\Gamma_g(2)$.
If a Zariski-open subset $Y$ of 
the hyperelliptic locus 
is contained in $Z$, then $\wti{Z}$ is connected, with finitely many irreducible components.
Moreover, if $Y$ is not entirely contained inside the singular locus of $Z$, then $\wti{Z}$ is irreducible.
\end{itemize}
\end{mprop}

The above result (Proposition \ref{prop:irr-Mg} in the body of the article)
is essentially a consequence of the topological considerations recalled
in Section \ref{sec:topological} together with
the surjectivity of the symplectic representations of the mapping class group (Fact \ref{fact:rho-surjective}) and 
of the hyperelliptic mapping class group at level 2 (Proposition \ref{prop:pi1-hyp}).

%

As applications of the above technique,
we show that even theta-nulls are universally irreducible for $g\geq 3$
(Corollary \ref{cor:tn}),
that each component of the moduli space of intermediate Jacobians of cubic threefolds in $\calA_5 (2)$ is universally irreducible
(Corollary \ref{cor:grad}),
and that the Schottky form is universally irreducible for $g\geq 4$
(Corollary \ref{cor:sch}).
Finally, we also describe how different
the situation with even theta-nulls is in genus two (Proposition \ref{prop:th2}).

%

\subsection{Structure of the paper}
Besides the present introduction, the paper has seven more sections and one appendix.

In Section \ref{sec:topological} we collect some standard
facts about the topology of complex-analytic spaces 
(Sections \ref{sec:stratification}-\ref{sec:cpx-irred})
and we prove some criteria for connectedness
and irreducibility of liftings (Sections \ref{sec:connected}-\ref{sec:lift}).

In Section \ref{sec:LHT} we prove the version of the Lefschetz hyperplane theorem mentioned in Section \ref{ssc:main}.

In Section \ref{sec:absol} we distill some topological properties
needed in a more general setting to have
the existence of a factorization into absolutely irreducible divisors
and to rephrase Theorem \ref{mthm:codim} and Theorem \ref{mthm:codim-B'}.

In Section \ref{sec:Ag} we recall some topological properties of 
$\calA_g$ or of its finite covers $\calA_g(\Gamma)$.
Using the tools developed in Section \ref{sec:topological},
we prove the connectedness claim and part (i) of Theorem \ref{mthm:div} and
Proposition \ref{mprop:sub}.
Moreover, we discuss universal irreducibility of subvarieties of $\calA_g(\Gamma)$
that contain the Jacobian or the hyperelliptic locus.
In particular we analyze the case of even theta constants.

In Section \ref{sec:universal}
we prove Theorem \ref{thm:ag} (which is a simultaneous formulation of
Theorem \ref{mthm:codim} and Theorem \ref{mthm:codim-B'}),
using the results contained in Section \ref{sec:topological} and Section \ref{app:arithm}, and
in Appendix \ref{sec:dehn}.
In particular, we review the construction of a rational translate of a subvariety of $\calA_g(\Gamma)$.

In Section \ref{sec:examples} we discuss
three examples: the zero locus of even theta constants in genus $2$,
the locus of intermediate Jacobians of cubic threefolds (in genus $5$)
and the Schottky form.

In Section \ref{app:arithm} we prove an arithmeticity criterion
for subgroups of $\Gamma_g$ needed in the proof of Theorem \ref{thm:ag}.

In Appendix \ref{sec:dehn}
we prove some easy facts about the symplectic monodromy at infinity
of subvarieties of the moduli space of curves.

 \subsection{Acknowledgements}  
 We are indebted to Eberhard Freitag   for  stimulating discussions,   suggestions and many useful comments on an earlier version of the manuscript.  We thank Mark Goresky   for useful discussions  on  Lefschetz hyperplane section theorem and  Julia Bernatska  for   illustrating her result on gradients of theta functions. We are also grateful to an anonymous referee for carefully reading the paper and for valuable suggestions.\\
 
The first-named author was partially supported by GNSAGA research group. Both authors were partially supported by PRIN 2022 grant ``Moduli and Lie theory''.


 \section{Covering spaces, connectedness and irreducibility}\label{sec:topological}

In the following section we collect
sufficient conditions that ensure that, via a covering map of a smooth variety $X$,
the preimage of an irreducible subvariety $Z\subset X$ is connected or irreducible.

For connectedness we have to estimate the fundamental group
of the subvariety $Z$ and, in particular, the image of $\pi_1(Z)\rar\pi_1(X)$. For irreducibility
we have to deal with the fundamental group of
the smooth locus of $Z$, which can be more subtle if $Z$ is not normal
and in particular is not locally irreducible.

We introduce a simple technique that allows us to gain some control
on such fundamental groups: it consists in finding
an irreducible, locally irreducible
subvariety $Y\subset Z$ for which we have better understanding of the image of $\pi_1(Y)\rar\pi_1(X)$.

We state the results we are interested in for complex-analytic spaces. We then explain
the needed modifications in the case of orbispaces.



\subsection{Complex varieties and links}\label{sec:stratification}

Here we collect some classical and basic facts about the topology
of complex analytic spaces. 
Unless differently specified, we work with the classical topology.\\

%

Since part of the results we will recall involve
the topology of neighbourhoods of (possibly singular) subvarieties
of (possibly singular) analytic spaces, we consider stratifications
whose locally closed strata are locally as simple as possible.

\begin{dfx}[Locally trivial analytic subspaces]
Let $X$ be a reduced analytic space.
An analytic subspace $Y\subseteq X$ is {\it{locally trivial}}
if there exists a neighbourhood $U_{X/Y}$ of $Y$ and a projection $U_{X/Y}\rar Y$
such that
\begin{itemize}
\item
the fiber $(U_{X/Y})_y$ over $y\in Y$ is the cone over the {\it{link $(L_{Y/X})_y:=\pa U_y$ of $Y$ inside $X$ at the point $y$}}
\item
$U_{X/Y}\rar Y$ and $L_{X/Y}:=\bigcup_{y\in Y}(L_{X/Y})_y\rar Y$
are topologically locally trivial fiber bundles.
\end{itemize}
\end{dfx}

Let $X$ be a reduced analytic space, and
let $\{X_i\}_{i\in I}$ be a {\it{stratification of $X$}},
namely $I$ is a partially ordered set,
$\bigsqcup_i X_i=X$ and $\ol{X}_i=\bigcup_{j\leq i}X_j$, where each $\ol{X}_i$ is an analytic subspace of $X$
and $X_i$ is Zariski open inside $\ol{X}_i$.

\begin{dfx}[Good stratifications]\label{dfx:good-strat}
A stratification $\{X_i\}_{i\in I}$ of $X$ is {\it{good}} if
\begin{itemize}
\item
each locally closed stratum $X_i$ is connected and non-singular
\item
$X$ admits a locally finite
triangulation such that every open simplex is contained
in a unique $X_i$
\item
every $X_i$ is locally trivial inside $X$.
\end{itemize}
\end{dfx}

In the setting of the above Definition \ref{dfx:good-strat},
we call $U_i:=U_{X_i/X}$ 
the {\it{tubular neighbourhood of $X_i$}} and 
$L_i:=L_{X_i/X}$ the {\it{link of $X_i$}} inside $X$,
and we denote by $\dot{U}_i$ the complement of $X_i$ inside $U_i$, which is a locally trivial $(0,1]$-bundle over $L_i$.
Note that the tubular neighbourhood and the link of $X_i$ inside $X$
admit the natural stratifications 
$U_{X_i/X}=\bigsqcup_{j\geq i} (U_{X_i/X}\cap X_j)$
and $L_{X_i/X}=\bigsqcup_{j\geq i} (L_{X_i/X}\cap X_j)$.


It is well-known that complex-analytic spaces admit a Whitney stratification and that Whitney stratifications are good in the above sense. Also the above-mentioned
stratifications of links and of tubular neighbourhoods of locally closed strata of a Whitney stratification are themselves Whitney stratifications.
We refer to \cite[Part I, section 1.2]{GM} for the definition of Whitney stratifications
and for a list of its main properties, and
to \cite[Part I, section 1.4]{GM} for further references on the above-mentioned result.

Another consequence of triangulability of complex-analytic spaces is that
they are locally contractible, and so for such spaces connectedness is equivalent to path-connectedness.\\
%
%

%
%

It can be shown that,
given a collection $\{Y_\alpha\}$ of reduced analytic subspaces of $X$,
every good stratification of $X$
can be refined to a good stratification which is compatible with $\{Y_\alpha\}$ in the following sense, cf.~\cite{Ch}.

\begin{dfx}[Compatibility]
Let $\{Y_\alpha\}$ be a collection of reduced analytic subspaces of $X$.
A stratification of $X$ is
{\it{compatible with $\{Y_\alpha\}$}} if
every finite intersection 
$Y_{\alpha_1}\cap\dots\cap Y_{\alpha_k}$ is a union of strata.
\end{dfx}

\subsection{Irreducibility and fundamental group}\label{sec:cpx-irred}

Here we collect some remarks about fundamental groups
and local irreducibility of analytic varieties.
We underline that ``locally irreducible'' is to be understood with respect
to the classical topology, and that such condition is equivalent to being ``unibranch''
(i.e.~locally irreducible with respect to the \'etale topology).

The results collected in the following lemma are classical.

\begin{lm}[On irreducibility and local irreducibility]\label{lm:irred}
Let $X$ be a reduced analytic space. 
\begin{itemize}
\item[(i)]
$X$ is irreducible if and only if its smooth locus $X_{\sm}$ is connected.
\item[(ii)]
If $X$ is locally irreducible and connected, then it is irreducible.
\item[(iii)]
If $X$ is normal, then it is locally irreducible.
\item[(iv)]
If $X$ is locally irreducible along an irreducible locally trivial subspace $W\subset X$, then the bundles $L_{W/X}\rar W$ 
and $\dot{U}_{W/X}$ have connected fibers.
\item[(v)]
If $X$ is normal and irreducible (resp. locally irreducible and connected), then so is its universal cover.
\end{itemize}
\end{lm}
\begin{proof}
(i) is proven in \cite[page 55]{Ch}.

(ii) Since $X$ is reduced, for every open subset $V$ of $X$
the locus $V\cap X_{\sm}$ is dense in $V$.
By local irreducibility of $X$ and part (i), the subset $V\cap X_{\sm}$ is connected. The conclusion follows from the connectedness of $X$ and (i).

(iii) is proven in \cite{zariski}.

(iv) Since $X$ is locally irreducible along $W$, the subset $U_{W/X}$ is irreducible and so is $\dot{U}_{W/X}$. By (i) the smooth locus
of $\dot{U}_{W/X}$ is dense and connected. Since $\dot{U}_{W/X}\rar W$ is a locally trivial bundle, the smooth locus of every fiber is dense and connected, and so every fiber is connected. The same conclusion holds for
$L_{W/X}$, since $\dot{U}_{W/X}$ is homeomorphic to an $\RR$-bundle over $L_{W/X}$.

(v)
By hypothesis $X$ is connected and so its universal cover $\wti{X}$ is connected.
Normality and local irreducibility are local properties, so they are inherited by $\wti{X}$.
The conclusion follows by (ii) and (iii).
\end{proof}
%
%

We believe that the following should be well-known.
Since we have been unable to find a proper reference, we include a proof 
for completeness.

\begin{lm}[Fundamental groups of locally irreducible varieties]\label{lm:normal}
Let $X$ be a connected, locally irreducible analytic variety of positive dimension.
\begin{itemize}
\item[(i)]
If $W\subsetneq X$ is a closed analytic subspace, then
$\mathring{X}=X\setminus W$ is connected and
the inclusion $\mathring{X} \hookrightarrow X$ induces a surjection
$\pi_1(\mathring{X})\thra\pi_1(X)$.
\item[(ii)]
$X_{\sm}$ is connected and
$\pi_1(X_{\sm})\rar\pi_1(X)$ is surjective.
\end{itemize}
\end{lm}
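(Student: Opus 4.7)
The plan is to deduce (ii) from (i) by taking $W=X_{\sing}$, which is a proper closed analytic subspace of $X$ since $X$ has positive dimension and therefore non-empty smooth locus. For (i), the strategy is to first prove that $\mathring X$ is connected, and then deduce the surjectivity on fundamental groups by applying the very same connectedness statement to the universal cover $\pi:\tilde X\to X$.

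For the connectedness of $\mathring X=X\setminus W$: irreducibility of $X$ ensures that $W\subsetneq X$ has empty interior, so $\ol{\mathring X}=X$. Suppose by contradiction $\mathring X=A\sqcup B$ with $A,B$ non-empty open; then $\ol A\cup\ol B=X$, and by connectedness of $X$ there exists $p\in\ol A\cap\ol B$. Using local irreducibility at $p$, pick an open neighborhood $V$ of $p$ which is itself an irreducible analytic space. Then $V\setminus W$ is connected: indeed, $V_{\sm}$ is connected by Lemma~\ref{lm:irred}(i), and $V_{\sm}\cap W$ is a proper closed analytic subset of the connected complex manifold $V_{\sm}$ (proper because $V_{\sm}$ is dense in $V$ and $W\subsetneq V$), so $V_{\sm}\setminus W$ is connected and dense in $V\setminus W$, forcing $V\setminus W$ to be connected. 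Since $V\setminus W\subset\mathring X$ is connected, it lies entirely in $A$ or entirely in $B$, say in $A$. But $p\in\ol B$ forces $V\cap B\ne\emptyset$, and $B\subset\mathring X$ then gives $\emptyset\ne V\cap B\subset V\setminus W\subset A$, contradicting $A\cap B=\emptyset$.

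For the surjectivity $\pi_1(\mathring X)\thra\pi_1(X)$: endow the topological universal cover $\tilde X$ with the pulled-back complex analytic structure (well-defined since $\pi$ is a local homeomorphism, using that complex analytic spaces are locally path-connected and semi-locally simply connected). Then $\tilde X$ is connected and locally irreducible, so by Lemma~\ref{lm:irred}(ii) it is irreducible. The preimage $\pi^{-1}(W)$ is a closed analytic subspace of $\tilde X$ with $\pi^{-1}(W)\subsetneq\tilde X$ (because $\pi$ is surjective and $W\subsetneq X$). Applying the connectedness statement just proven to $\tilde X$ and $\pi^{-1}(W)$, the complement $\pi^{-1}(\mathring X)=\tilde X\setminus\pi^{-1}(W)$ is connected. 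By the standard correspondence between connected components of preimages in the universal cover and cosets of the image subgroup in $\pi_1(X)$, this is equivalent to the surjectivity of $\pi_1(\mathring X)\to\pi_1(X)$.

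The main obstacle is really the local connectedness claim that $V\setminus W$ is connected for a small irreducible neighborhood $V$ of a boundary point $p$; this in turn hinges on the classical fact that a connected complex manifold minus a proper closed analytic subset remains connected (such a subset having real codimension at least $2$). Once this is in hand, the rest of the argument is purely topological and covering-space-theoretic, modulo the standard but worth-noting point that the universal cover of a complex analytic space carries a natural complex analytic structure along which local irreducibility descends.
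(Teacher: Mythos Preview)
Your proof is correct. For part (ii) and for the connectedness assertion in (i), you and the paper agree in substance (though the paper dispatches connectedness in one line: $\mathring{X}$ is Zariski-open in the irreducible $X$, hence itself irreducible, hence connected --- your contradiction argument via a small irreducible neighbourhood is correct but more than is needed).

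For the surjectivity $\pi_1(\mathring{X})\twoheadrightarrow\pi_1(X)$ in (i), however, you take a genuinely different route. The paper argues by direct loop-deformation: it stratifies $W$ into finitely many smooth strata, reduces by induction to a single closed stratum, and then uses the tubular neighbourhood/link bundle $L_{W/X}\to W$ --- whose fibres are connected precisely by the local irreducibility hypothesis via Lemma~\ref{lm:irred}(iv) --- to lift arcs across $W$. You instead pass to the universal cover $\tilde X$, note that it inherits local irreducibility and is therefore irreducible, and reapply the connectedness statement there; connectedness of $\pi^{-1}(\mathring{X})$ then yields the surjectivity by standard covering-space theory (this is essentially Lemma~\ref{lm:lift}(iii) of the paper). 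Your argument is cleaner and entirely avoids the stratification and link machinery of Section~\ref{sec:stratification}; the paper's argument is more explicitly geometric, and its link analysis is not wasted, since the same setup is reused in the proof of Lemma~\ref{surjects}.
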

\begin{proof}
Note first that $X$ is irreducible by Lemma \ref{lm:irred}(ii).

(i) Since $X$ is irreducible, $\mathring{X}$ is irreducible too and so it is connected.
Consider a good stratification of $X$ such that $W$ is a union of strata (see Section \ref{sec:stratification}) and let $W_d$ be the union of strata inside $W$ of dimension at most $d$.
Since $W=W_N$ for $N$ large and $W_{-1}=\emptyset$, it is enough to show
that $\pi_1(X\setminus W_d)\rar\pi_1(X\setminus W_{d-1})$ is surjective 
for all $d\geq 0$.

Fix $d\geq 0$ and let $X'=X\setminus W_{d-1}$ and $W'=W_d\setminus W_{d-1}$,
and let $\iota:X'\setminus W'\hra X'$ be the inclusion.
We need to show that $\iota_*:\pi_1(X'\setminus W')\thra \pi_1(X')$ (the basepoint being
picked anywhere in $X'\setminus W'$).
This is an immediate consequence of the fact that the fibers of $L_{W'/X'}\rar W'$ are connected. For sake of completeness, here we give a thourough proof.

Note that $W'$ is a closed subset of $X'$ and a disjoint union of locally trivial
smooth subvarieties of dimension $d$.
We recall that $U_{W'/X'}\rar W'$ and $\dot{U}_{W'/X'}\rar W'$ are locally trivial fibrations, though the topology of the fiber might depend on the connected component of $W'$.
Moreover, by Lemma \ref{lm:irred}(iv), the fibers of $\dot{U}_{W'/X'}\rar W'$ are connected. Hence, a path in $U_{W'/X'}$ that joins two points $x_0,x_1$ of $L_{W'/X'}$
is homotopic (inside $U_{W'/X'})$ to a path entirely contained inside $L_{W'/X'}$ that goes from $x_0$ to $x_1$.

Consider now the class in $\pi_1(X')$ of a loop $\alpha$: we want to show
that $[\alpha]$ is in the image of $\iota_*$.
By the above observation, any portion of $\alpha$ that is contained inside $U_{W'/X'}$
can be replaced by a path with the same endpoints and homotopic
to it, with support entirely
contained in $L_{W'/X'}$. As a result, we produce another loop $\alpha'$ in $X'$
homotopic to $\alpha$, whose support avoids $W'$, and so $[\alpha]=\iota_*[\alpha']$.

(ii) follows from (i) by taking $W=X_{\sing}$.
\end{proof}

In the following lemma we estimate the fundamental group of the smooth locus
of a variety $Z$ in terms of the fundamental group of a subvariety $Y\subset Z$,
thus employing the first technique mentioned at the beginning of the section.

\begin{lm}[Estimating the fundamental group of $Z_{\sm}$]\label{surjects}
Suppose that  $Z$ is  an irreducible analytic variety and $Y\subset Z$ an irreducible, locally irreducible subvariety with $\dim(Y)<\dim(Z)$.
Let $H$ be the image of $\pi_1(Y)\rar\pi_1(Z)$ and $K$ be the image of $\pi_1(Z_{\sm})\rar\pi_1(Z)$.
Then $H\cap K$ has finite index in $H$.
\end{lm}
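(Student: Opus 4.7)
The plan is to translate the desired bound into a statement about the right action of $H$ on $K\backslash G$ where $G=\pi_1(Z,y_0)$: by orbit-stabilizer, $[H:H\cap K]$ equals the size of the $H$-orbit of the trivial coset $Ke$, so it suffices to exhibit, for every loop $\gamma\subset Y$, a decomposition $[\gamma]=k_\gamma\cdot g_{i(\gamma)}$ with $k_\gamma\in K$ and $g_{i(\gamma)}$ taken from a fixed list $\{g_1,\ldots,g_k\}\subset G$.

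I would first restrict attention to the Zariski-open subset $\mathring{Y}\subseteq Y$ of points $y$ at which $Y$ is smooth, $Z$ has exactly $k$ branches through $y$, and each of these branches contains a neighborhood of $y$ in $Y$ (at a generic $y\in Y$ any branch through $y$ must contain the irreducible germ of $Y$, since otherwise $y$ would lie in a proper analytic subset of $Y$). By Lemma \ref{lm:normal}(i) applied to the locally irreducible $Y$, the map $\pi_1(\mathring{Y})\to\pi_1(Y)$ is surjective, so every loop in $Y$ is represented by one in $\mathring{Y}$. Fix $y_0\in\mathring{Y}$ and label the branches of $Z$ at $y_0$ as $Z_1,\ldots,Z_k$. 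Choose smooth points $z_i\in(Z_i)_{\sm}\cap Z_{\sm}$ close to $y_0$, short paths $\sigma_i$ from $y_0$ to $z_i$ running inside $Z_i$, and paths $\tau_{1,j}$ from $z_1$ to $z_j$ lying in $Z_{\sm}$ (which is connected by Lemma \ref{lm:irred}(i)); set $\tau_{j,1}:=\tau_{1,j}^{-1}$ and
\[
g_j:=[\sigma_1\cdot\tau_{1,j}\cdot\sigma_j^{-1}]\in G,\qquad j=1,\ldots,k.
\]
The normalization of $Z$ produces an \'etale $k$-sheeted cover $\nu^{-1}(\mathring{Y})\to\mathring{Y}$ and hence a monodromy homomorphism $\pi:\pi_1(\mathring{Y},y_0)\to\SY_k$ permuting the branch labels.

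For any loop $\gamma\subset\mathring{Y}$ based at $y_0$ I then perform a generic transverse push-off of $\gamma$ inside the analytic continuation of branch $Z_1$ along $\gamma$: this yields a path $\tilde{\gamma}_1$ joining $z_1$ to $z_{\pi(\gamma)(1)}$, and a sufficiently generic displacement keeps $\tilde{\gamma}_1$ inside $Z_{\sm}$ (avoiding the codimension-$\geq 1$ intersections $Z_1\cap Z_j$ inside $Z_1$). A standard ``moving endpoints'' homotopy gives $[\gamma]=[\sigma_1\cdot\tilde{\gamma}_1\cdot\sigma_{\pi(\gamma)(1)}^{-1}]$ in $G$; inserting the null-homotopic loop $\tau_{\pi(\gamma)(1),1}\cdot\tau_{1,\pi(\gamma)(1)}$ at $z_{\pi(\gamma)(1)}$ and regrouping yields
\[
[\gamma]=\bigl[\sigma_1\cdot(\tilde{\gamma}_1\cdot\tau_{\pi(\gamma)(1),1})\cdot\sigma_1^{-1}\bigr]\cdot g_{\pi(\gamma)(1)}=k_\gamma\cdot g_{\pi(\gamma)(1)},
\]
with $k_\gamma\in K$, since $\tilde{\gamma}_1\cdot\tau_{\pi(\gamma)(1),1}$ is a loop at $z_1$ contained in $Z_{\sm}$. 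Hence the right coset $K\cdot[\gamma]$ equals $K\cdot g_{\pi(\gamma)(1)}$, which depends only on $\pi(\gamma)(1)\in\{1,\ldots,k\}$; the $H$-orbit of $Ke$ therefore contains at most $k$ elements and the bound $[H:H\cap K]\leq k$ follows.

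The delicate step will be making the push-off rigorous: I need the analytic continuation of the label ``branch $Z_1$'' along $\gamma$ to coincide with the monodromy of the \'etale cover $\nu^{-1}(\mathring{Y})\to\mathring{Y}$, and the tubular neighborhood of $\mathring{Y}$ inside each branch (available from the stratification machinery of Section \ref{sec:stratification}) must admit a sufficiently small and generic transverse displacement keeping $\tilde{\gamma}_1$ inside $Z_{\sm}$.
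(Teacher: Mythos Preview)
Your argument is correct and is essentially the same as the paper's: both restrict to a Zariski-open $\mathring{Y}$ where $Z$ has exactly $k$ branches, and both exploit the monodromy of these branches to show that any loop in $\mathring{Y}$ can be pushed into $Z_{\sm}$ up to one of at most $k$ fixed ``correction'' classes. The paper packages this via the link bundle $L'_{\mathring{Y}/Z_{\sm}}\to\mathring{Y}$ and the long exact sequence of a fibration (the $c\leq k$ fiber components play the role of your branch labels), whereas you make the coset representatives $g_j$ and the push-off $\tilde{\gamma}_1$ explicit; the ``delicate step'' you flag is exactly the paper's claim that the fibers of $L'_{\mathring{Y}/Z_{\sm}}\to\mathring{Y}$ stay connected branch-by-branch.
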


Note that, if $Z$ is not locally irreducible, the image of $\pi_1(Z_{\sm})$ need not be of finite index inside $\pi_1(Z)$.

\begin{proof}[Proof of Lemma \ref{surjects}]
Note preliminarly that $Z_{\sm}$ is smooth and connected
by Lemma \ref{lm:irred}(i), and so $Z_{\sm}\setminus \ol{Y}$ is connected
by Lemma \ref{lm:normal}(i).
A Whitney stratification of $Y$ inside $Z$ has one Zariski-open stratum
$\mathring{Y}$. Such $\mathring{Y}$ is then smooth and
locally trivial inside $Z$, and so it is connected by 
Lemma \ref{lm:irred}(i).
Observe that $\pi_1(\mathring{Y})$ maps surjectively onto $\pi_1(Y)$ by Lemma \ref{lm:normal}(i).

%
Let $k$ be the number of branches of $Z$ at each point of $\mathring{Y}$,
that is the number of connected components of $\dot{U}=\dot{U}_{\mathring{Y}/Z}$.
Pick a connected component $\dot{U}'$ of $\dot{U}$.
The fiber $F'$ of $\dot{U}'\rar \mathring{Y}$ has $c\leq k$ connected components and the exact sequence
$\pi_1(\dot{U}')\rar \pi_1(\mathring{Y})\rar \pi_0(F')\rar \{\ast\}$ 
shows that the image of $\pi_1(\dot{U}')\rar \pi_1(\mathring{Y})$ has index $c$. 


Let $\dot{U}'_{\sm}:=\dot{U}'\cap Z_{\sm}$.
The fibers of $\dot{U}'_{\sm}\rar\mathring{Y}$ are dense in those of $\dot{U}'\rar\mathring{Y}$ and so they have $c$ connected components too.
As a consequence, 
the images of $\pi_1(\dot{U}'_{\sm})\thra \pi_1(\mathring{Y})$
and of $\pi_1(\dot{U}')\thra \pi_1(\mathring{Y})$ coincide.

The conclusion follows from the commutativity of the diagram
\[
\xymatrix{
\pi_1(\dot{U}'_{\sm}) \ar[rr] \ar@{^(->}[d]_{\text{index $c$}}\ar[rrd] && \pi_1(Z_{\sm}) \ar[d] \\
\pi_1(\mathring{Y}) \ar@{->>}[r] & \pi_1(Y) \ar[r] & \pi_1(Z)
}
\]
since the image of $\pi_1(\dot{U}'_{\sm})\rar\pi_1(Z)$ is contained inside $H\cap K$ and has index
at most $c$ in $H$.
\end{proof}

Here is a very useful consequence of the above lemma.

\begin{cor}[Image of fundamental groups of smooth loci]\label{cor:image-smooth}
Let $Y$ and $Z$ be irreducible algebraic varieties
and let $f:Y\rar Z$ be morphism which is generically finite onto its image.
Let  $H$  be the image of $\pi_1(Y_{\sm})\rar\pi_1(Z)$ and $K$ the image of $\pi_1(Z_{\sm})\rar\pi_1(Z)$.
Then
\begin{itemize}
\item[(i)]
$K\cap H$ is a finite-index subgroup of $H$.
%
\item[(ii)]
If $\dim(Y)=\dim(Z)$, then $H$ is a finite-index subgroup of $K$. 
\end{itemize}
\end{cor}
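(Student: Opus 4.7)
The strategy is to restrict to Zariski-opens where $f$ is a well-behaved cover, and then invoke Lemma \ref{surjects} (or a mild variant of it for part (i)). First I would choose a Zariski-open $\mathring W \subseteq W := f(Y)$ such that $\mathring W$ is smooth, $Z$ has exactly $k$ branches at each point of $\mathring W$, and the preimage $\mathring Y := f^{-1}(\mathring W)$ sits inside $Y_{\sm}$ and maps to $\mathring W$ as an unramified degree-$d$ cover. Since $\mathring Y$ is Zariski-open in the irreducible variety $Y$ it is connected, so $\pi_1(\mathring Y)$ embeds in $\pi_1(\mathring W)$ as a subgroup of index $d$. Lemma \ref{lm:normal}(i), applied to the smooth (hence locally irreducible) variety $Y_{\sm}$ with closed analytic subspace $Y_{\sm}\setminus \mathring Y$, gives $\pi_1(\mathring Y) \twoheadrightarrow \pi_1(Y_{\sm})$, so the image of $\pi_1(\mathring Y) \to \pi_1(Z)$ coincides with $H$.

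For part (ii), the hypothesis $\dim Y = \dim Z$ forces $W$ to be dense in $Z$, so $\mathring W$ can be taken to be a Zariski-open subset of $Z_{\sm}$. Lemma \ref{lm:normal}(i) applied to $Z_{\sm}$ then gives $\pi_1(\mathring W) \twoheadrightarrow \pi_1(Z_{\sm})$, whose image in $\pi_1(Z)$ is $K$. Composing with the index-$d$ inclusion $\pi_1(\mathring Y) \hookrightarrow \pi_1(\mathring W)$ shows that $H \subseteq K$ with $[K : H] \leq d$.

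For part (i), the case $\dim Y = \dim Z$ is immediate from (ii), so I may assume $\dim W < \dim Z$. I would then re-run the proof of Lemma \ref{surjects} essentially verbatim with $\mathring W$ in place of $Y$: the only place where local irreducibility of $Y$ enters that argument is to obtain the surjection $\pi_1(\mathring Y) \twoheadrightarrow \pi_1(Y)$, and this step becomes unnecessary if one tracks the image $H' := \op{im}(\pi_1(\mathring W) \to \pi_1(Z))$ directly rather than passing through $\pi_1(W)$. This variant yields $[H' : H' \cap K] \leq k$, and combined with $H \subseteq H'$ and the natural injection $H/(H \cap K) \hookrightarrow H'/(H' \cap K)$, it gives $[H : H \cap K] \leq k \leq dk$. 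The main technical obstacle is precisely that the image $W = f(Y)$ need not be locally irreducible even when $Y$ is smooth, which is why Lemma \ref{surjects} must be applied to the smooth Zariski-open $\mathring W$ rather than to $W$ itself.
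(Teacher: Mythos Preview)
Your proof is correct and follows essentially the same strategy as the paper: restrict to a smooth Zariski-open $\mathring{W}\subseteq f(Y)$ over which $f$ is an unramified degree-$d$ cover, then use Lemma~\ref{surjects} (applied to $\mathring{W}\subset Z$) together with the index-$d$ inclusion $\pi_1(\mathring{Y})\hookrightarrow\pi_1(\mathring{W})$. One remark: since $\mathring{W}$ is smooth it is already locally irreducible, so Lemma~\ref{surjects} applies to it directly---there is no need to ``re-run'' the proof. A second remark: your injection $H/(H\cap K)\hookrightarrow H'/(H'\cap K)$ in fact gives the sharper bound $[H:H\cap K]\le k$ (the factor $d$ in the stated bound $dk$ is not actually needed); the paper obtains the same intermediate inequality $[\check{H}:K\cap\check{H}]\le k$ but then records the looser conclusion $[H:K\cap H]\le dk$.
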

\begin{proof}
Let $d$ be the cardinality of the general fiber of $f:Y\rar f(Y)$
and note that $f(Y)$ is irreducible.
Let $W\subset f(Y)$ be the union of $f(Y)_{\sing}$,
of $f(Y_{\sing})$ and of the locus of $z\in f(Y)$
such that $f^{-1}(z)$ does not consist of $d$ distinct points.
Up to taking a larger $W$, we can assume that $f(Y)\setminus W$
is locally trivial.

Since $Y,Z,f(Y)$ are irreducible, their smooth loci are connected, and so the smooth
$\mathring{Y}=Y\setminus f^{-1}(W)$ and $f(\mathring{Y})=f(Y)\setminus W$ are connected.
Moreover, the restriction $\bar{f}:\mathring{Y}\rar f(\mathring{Y})$ of $f$ is a topological cover of degree $d$.

Observe that the diagram
\[
\xymatrix{
\pi_1(\mathring{Y}) \ar@{->>}[rr] \ar[rrrd] \ar@{^(->}[d]^{\bar{f}_*}_{\text{index $d$}} && \pi_1(Y_{\sm})\ar[rd] \\
\pi_1(f(\mathring{Y}))\ar[rrr] &&&  \pi_1(Z) 
}
\]
is commutative
and let $\check{H}$ be the image of $\pi_1(f(\mathring{Y}))\rar\pi_1(Z)$.
Since $H$ coincides with the image of $\pi_1(\mathring{Y})\rar\pi_1(Z)$, the subgroup
$H$ is contained inside $\check{H}$ with $[\check{H}:H]\leq d$.

(i) Suppose first that $\dim(Y)<\dim(Z)$
and let $k$ be the number of connected components of the fibers of
$\dot{U}_{f(\mathring{Y})/Z}\rar f(\mathring{Y})$.
By Lemma \ref{surjects} applied to $f(\mathring{Y})\subset Z$,
the subgroup $K\cap \check{H}$ has finite index in $\check{H}$: in particular,
the proof of Lemma \ref{surjects} shows that
$[\check{H}:K\cap\check{H}]\leq k$. It follows that $[H:K\cap H]\leq 
[H:K\cap\check{H}]=[H:\check{H}]\cdot [\check{H}:K\cap\check{H}]\leq dk$.

(ii) Suppose now that $\dim(Y)=\dim(Z)$.
Since $Z$ is irreducible, $f(Y)$ is a Zariski-dense open subset of $Z$
and so $\mathring{Y}:=f^{-1}(Z_{\sm})$ is a Zariski-open subset of $Y$.
Thus $\pi_1(\mathring{Y})\rar\pi_1(Z)$ 
factors through the surjective map $\pi_1(f(\mathring{Y}))\thra\pi_1(Z_{\sm})$.
It follows that $\check{H}=K$ and so $H\subseteq K$.
Hence, $[K:H]=[\check{H}:H]\leq d$.
\end{proof}

\subsection{Liftings and connectedness}\label{sec:connected}

Let $X,Z$ be connected and locally arc-connected topological spaces 
with universal covers $p:\wti{X}\rightarrow X$ and $\univ{Z}\rar Z$.
Given a map $f:Z\rar X$, consider the following diagram
\[
\xymatrix{
{\univ{Z}} \ar[d]  \ar@/^1pc/[drr]^{\univ{f}_i}\\
\wti{Z}_i \ar[dr] \ar@{^(->}[r] & Z\times_X \wti{X} \ar[r]^{\quad\tilde{f}} \ar[d]^{\tilde{p}} & \wti{X} \ar[d]^{p}\\
& Z \ar[r]^f & X
}
\]
where the rectangle is Cartesian and $\wti{Z}_i$ is a connected component of  $Z\times_X \wti{X}$. We denote by $f_*:\pi_1(Z)\rar\pi_1(X)$ the homomorphism induced by $f$.

\begin{lm}[Liftings]\label{lm:lift}
Let $f:Z\rightarrow X$ be a map of connected topological spaces.
\begin{itemize}
\item[(i)]
The  group $\pi_1(X)$  acts   transitively on the set of liftings $\univ{f}:\univ{Z}\rightarrow\wti{X}$.
\item[(ii)]
The $\pi_1(X)$-set of connected components of
the fiber product $Z\times_X \wti{X}$ is isomorphic to $\pi_1(X)/f_*\pi_1(Z)$.
For each component $\wti{Z}_i$ of $Z\times_X \wti{X}$, its fundamental group satisfies
$\pi_1(\wti{Z}_i)=\ker(f_*)$ and there exists a lift $\univ{f}_i$ as in (i)
that factors through the cover $\univ{Z}\rightarrow \wti{Z}_i$.
\item[(iii)]
$Z\times_X \wti{X}$ is connected if and only if $f_*:\pi_1(Z)\rightarrow\pi_1(X)$ is surjective.
\end{itemize}
\end{lm}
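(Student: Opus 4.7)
The plan is to deduce all three parts from standard covering space theory (basepoint-lifting and the Galois correspondence between covers of $Z$ and subgroups of $\pi_1(Z)$), treating the fiber product $Z\times_X\wti{X}$ as the pullback of the universal cover $p:\wti{X}\rar X$.

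For part (i), I would fix basepoints $\hat{z}_0\in\wh{Z}$, $z_0\in Z$, $x_0=f(z_0)\in X$ and $\tilde{x}_0\in p^{-1}(x_0)$. Since $\wh{Z}$ is simply connected, the lifting criterion produces a unique lift $\wh{f}:\wh{Z}\rar\wti{X}$ with $\wh{f}(\hat{z}_0)=\tilde{x}_0$. The deck group $\mathrm{Aut}(\wti{X}/X)\cong\pi_1(X)$ acts on $\wti{X}$ by homeomorphisms over $X$, hence acts on the set of lifts by post-composition. Transitivity follows because a lift is determined by the image of $\hat{z}_0$, and $\pi_1(X)$ acts freely and transitively on the fiber $p^{-1}(x_0)$.

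For part (ii), I would regard $\tilde{p}:Z\times_X\wti{X}\rar Z$ as the covering space of $Z$ obtained by pullback and describe it by its monodromy on the fiber $p^{-1}(x_0)$. The deck group $\pi_1(X)$ acts freely and transitively on this fiber by right multiplication (after choosing $\tilde{x}_0$ to identify it with $\pi_1(X)$), while the monodromy of $\pi_1(Z)$ factors through $f_*$ by left multiplication. Hence the orbits of $\pi_1(Z)$ on the fiber, which classify connected components, are the right cosets $f_*\pi_1(Z)\backslash\pi_1(X)$, and the residual $\pi_1(X)$-action identifies the set of components with $\pi_1(X)/f_*\pi_1(Z)$ as $\pi_1(X)$-sets. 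The stabilizer in $\pi_1(Z)$ of a point in the fiber (equivalently, the subgroup classifying the component $\wti{Z}_i$) is $\{\gamma\in\pi_1(Z):f_*(\gamma)=e\}=\ker(f_*)$, whence $\pi_1(\wti{Z}_i)=\ker(f_*)$. Finally, since $\wh{Z}\rar Z$ is universal it factors through the intermediate cover $\wti{Z}_i\rar Z$, and composing with $\wti{Z}_i\hookrightarrow Z\times_X\wti{X}\arr{\tilde{f}}\wti{X}$ produces the desired lift $\wh{f}_i$, which is of the type in (i).

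Part (iii) is then immediate from (ii): $Z\times_X\wti{X}$ is connected if and only if the $\pi_1(X)$-set $\pi_1(X)/f_*\pi_1(Z)$ is a singleton, i.e.\ if and only if $f_*\pi_1(Z)=\pi_1(X)$. I do not expect real obstacles here, since the lemma is bookkeeping in covering space theory; the only point that requires care is matching the conventions for left versus right actions in (ii) so that the component set is presented as $\pi_1(X)/f_*\pi_1(Z)$ rather than as the opposite coset space, and keeping track of the basepoint choices under which the identification $p^{-1}(x_0)\cong\pi_1(X)$ is made.
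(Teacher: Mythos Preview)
Your proposal is correct and follows essentially the same approach as the paper: both arguments fix basepoints, identify the fiber $\tilde{p}^{-1}(z_0)$ with $\pi_1(X)$ via the deck action, and read off $\pi_0(Z\times_X\wti{X})\cong\pi_1(X)/f_*\pi_1(Z)$ and $\pi_1(\wti{Z}_i)=\ker(f_*)$ from the monodromy/path-lifting description of the pulled-back cover. The only cosmetic difference is that you phrase (ii) via the monodromy representation and Galois correspondence, whereas the paper spells out the same thing by explicit path lifting; your remark about left/right conventions is well taken and is exactly the bookkeeping the paper leaves implicit.
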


The above statement follows from standard arguments in the theory
of covering spaces. We include a proof for convenience.

\begin{proof}[Proof of Lemma \ref{lm:lift}]
Fix a point $z\in Z$ and choose $\univ{z}\in\univ{Z}$ a lift of $z$.
Let also $x=f(z)\in X$ and choose $\tilde{x}\in \wti{X}$ a lift of $x$.

(i) A lifting $\univ{f}$ is uniquely determined by the choice of $\univ{f}(\univ{z})\in p^{-1}(x)$.
The conclusion follows since the set $p^{-1}(x)$ is acted on simply transitive by $\pi_1(X,x)$.

(ii) The group $\pi_1(X,x)$ acts by simply and transitively permuting the elements
in $p^{-1}(x)$, and so $\pi_1(X,x)\cdot\tilde{x}=p^{-1}(x)$.
It can be easily seen that the surjective map $\pi_1(X,x)\cdot (z,\tilde{x})=\tilde{p}^{-1}(z)\rar \pi_0(Z\times_X\wti{X})$
is a map of $\pi_1(X,x)$-sets.
Moreover,
two elements $(z,\tilde{x}),(z,\tilde{x}')$ 
in $\tilde{p}^{-1}(z)$
belong to the same
connected component of $Z\times_X \wti{X}$ if and only if there exists a path $\alpha\in\pi_1(Z,z)$
such that $f\circ\alpha$ lifts to a path in $\wti{X}$ that joins $\tilde{x}$ and $\tilde{x}'$,
namely $f_*(\alpha)\cdot \tilde{x}=\tilde{x}'$. Hence, $\pi_0(Z\times_X\wti{X})$
can be identified to $\pi_1(X,x)/f_*\pi_1(Z,z)$.

By the universal property, any lifting $\tilde{f}$ factors through $Z\times_X \wti{X}$ and covers a connected component of $Z\times_X\wti{X}$. If the component $\wti{Z}_i$ contains the point $\tilde{z}_i:=(z,\tilde{x})$, then the lift that satisfies $\univ{f}(\univ{z})=\tilde{x}$ induces a cover $\univ{Z}\rightarrow \wti{Z}_i$.

Pick a connected component $\wti{Z}_i$ of $Z\times_X \wti{X}$. It covers $Z$, and so
$\pi_1(\wti{Z}_i,\tilde{z}_i)\hookrightarrow\pi_1(Z,z)$ is injective.
Moreover, the composition $\pi_1(\wti{Z}_i,\tilde{z}_i)\rightarrow\pi_1(Z,z)\rightarrow \pi_1(X,x)$ vanishes,
as it factors through $\pi_1(\wti{X},\tilde{x})=\{e\}$, and so $\pi_1(\wti{Z}_i,\tilde{z}_i)$ injectively
maps into $\ker(f_*)$. It also maps surjectively, since all elements of $\ker(f_*)$ lift
to closed loops in $\wti{Z}_i$.

(iii) follows from (ii).
%
%
\end{proof}

An argument analogous to Lemma \ref{lm:lift}(ii) also shows the following.

\begin{lm}[Liftings via finite covers]\label{lm:finite-lift}
Let $p':X'\rar X$ be a covering space
and $f:Z\rar X$ a map of connected topological spaces.
\begin{itemize}
\item[(i)]
If $p'$ is a finite cover, then
the number of connected components of $Z\times_X X'$ is 
$[\pi_1(X)\,:\,p'_*\pi_1(X')]/[f_*\pi_1(Z)\,:\,p'_*\pi_1(X')\cap f_*\pi_1(Z)]$.
\item[(ii)]
$Z\times_X X'$ is connected if and only if
the induced map $\pi_1(Z)\rar \pi_1(X)/p'_*\pi_1(X')$ is surjective.
\end{itemize}
\end{lm}

Out of the above lemmas, we can already draw a first consequence,
which is an example of the strategy outlined at the beginning of the section.

\begin{cor}[Connected lifting of analytic subspaces]\label{cor:connected-LHT}
Let $X'\rar X$ be a cover of connected complex-analytic spaces, $Z\subset X$ a connected analytic subspace
and $Z'$ its preimage inside $X'$.
Suppose that there exists an analytic subspace $Y\subseteq Z$
such that $\pi_1(Y)\thra\pi_1(X)$. Then $Z'$ is connected.
\end{cor}
\begin{proof}
In view of Lemma \ref{lm:lift}, it is enough to show that $\pi_1(Z)\rar \pi_1(X)$ is surjective.
This is immediate because of the factorization $\pi_1(Y)\rar\pi_1(Z)\rar\pi_1(X)$.
%
\end{proof}

\subsection{Liftings and irreducibility}\label{sec:lift}

Let $X,Z$ be as in Section \ref{sec:connected}
and assume that both $X$ and $Z$ are
connected analytic spaces.

We begin by stating our fundamental irreducibility criterion,
which relies on Lemma \ref{lm:lift} and Lemma \ref{lm:finite-lift}.

\begin{cor}[Irreducibility criterion of $Z\times_X \wti{X}$]\label{cor:criterion}
Let $f:Z\rightarrow X$ be a map of complex-analytic varieties, with $Z$ irreducible.
\begin{itemize}
\item[(i)]
The analytic space $Z\times_X \wti{X}$ is irreducible if and only if $f_*:\pi_1(Z_{\sm})\rar\pi_1(X)$ is surjective.
\item[(ii)]
If $p':X'\rar X$ is a covering space, then the irreducible components of $Z\times_X X'$
correspond bijectively  to the cokernel of the map $\pi_1(Z_{\sm})\rar \pi_1(X)/p'_*\pi_1(X')$ induced by $f_*$.
\end{itemize}
\end{cor}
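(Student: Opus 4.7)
The plan is to reduce irreducibility of $Z\times_X\wti{X}$ to connectedness of its smooth locus, and then apply Lemma~\ref{lm:lift}(iii); for (ii), use Lemma~\ref{lm:finite-lift} analogously.

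First, since $Z$ is irreducible, $Z_{\sm}$ is connected by Lemma~\ref{lm:irred}(i). The covering maps $\wti{X}\rar X$ and $p':X'\rar X$ are local analytic isomorphisms, so the base changes $Z\times_X\wti{X}\rar Z$ and $Z\times_X X'\rar Z$ are also local analytic isomorphisms. In particular, they identify smooth points with preimages of smooth points of $Z$, so
\[
(Z\times_X\wti{X})_{\sm}=Z_{\sm}\times_X\wti{X},\qquad (Z\times_X X')_{\sm}=Z_{\sm}\times_X X'.
\]

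Next, I claim that in any reduced analytic space $W$ the connected components of $W_{\sm}$ are in bijection with the irreducible components of $W$. Indeed, the decomposition $W=\bigcup_i W_i$ into irreducible components gives
\[
W_{\sm}=\bigsqcup_i \Bigl((W_i)_{\sm}\setminus\bigcup_{j\neq i} W_j\Bigr),
\]
and each piece is a connected complex manifold: $(W_i)_{\sm}$ is connected by Lemma~\ref{lm:irred}(i), and removing the analytic subset $\bigcup_{j\neq i} W_j$ (of complex codimension at least $1$, hence real codimension at least $2$) preserves connectedness. Applied to $W=Z\times_X\wti{X}$ and $W=Z\times_X X'$, this reduces counting irreducible components to counting connected components of the smooth loci computed above.

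For (i), apply Lemma~\ref{lm:lift}(iii) to the restriction $f|_{Z_{\sm}}:Z_{\sm}\rar X$ (legitimate since $Z_{\sm}$ is connected): the space $Z_{\sm}\times_X\wti{X}$ is connected if and only if $\pi_1(Z_{\sm})\rar\pi_1(X)$ is surjective, which by the previous step is equivalent to irreducibility of $Z\times_X\wti{X}$. For (ii), applying Lemma~\ref{lm:finite-lift} to $f|_{Z_{\sm}}$ gives that the connected components of $Z_{\sm}\times_X X'$ biject with $\pi_1(X)/G'$, where $G'$ is generated by $f_*\pi_1(Z_{\sm})$ and $p'_*\pi_1(X')$; this coset space is precisely the cokernel (as a pointed set) of the induced map $\pi_1(Z_{\sm})\rar\pi_1(X)/p'_*\pi_1(X')$.

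The main point to pin down is the structural bijection between connected components of $W_{\sm}$ and irreducible components of $W$; once this is in hand, the corollary is an immediate repackaging of the connectedness criteria in Lemmas~\ref{lm:lift} and~\ref{lm:finite-lift} applied to the smooth locus.
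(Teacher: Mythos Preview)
Your proof is correct and follows essentially the same approach as the paper: identify $(Z\times_X\wti{X})_{\sm}$ with $Z_{\sm}\times_X\wti{X}$ via the local isomorphism, reduce irreducibility to connectedness of the smooth locus, and apply Lemma~\ref{lm:lift}(iii) (resp.\ Lemma~\ref{lm:finite-lift}) to $f|_{Z_{\sm}}$. The paper phrases the reduction via ``$Z_{\sm}\times_X\wti{X}$ is smooth and Zariski-dense'' rather than your explicit bijection between irreducible components of $W$ and connected components of $W_{\sm}$, but the content is the same; your formulation has the minor advantage of directly yielding the component count needed for~(ii).
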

\begin{proof}
(i) It is enough to observe that $Z_{\sm}\times_X\wti{X}$ is smooth and Zariski-dense in $Z\times_X\wti{X}$
and that $Z_{\sm}\times_X\wti{X}$ is irreducible if and only if it is connected.
The conclusion follows from Lemma \ref{lm:lift}(iii).

(ii) is analogous to (i), by applying Lemma \ref{lm:finite-lift} instead of Lemma \ref{lm:lift}(iii).
\end{proof}

The following proposition is an incarnation of the strategy mentioned at the beginning of the section, namely
to prove the irreducibility of $Z\times_X\wti{X}$ by using certain subvarieties of
$Z$ with large fundamental group.

\begin{prop}[Irreducibility of $Z\times_X \wti{X}$ via subvarieties of $Z$]\label{prop:irred}
Let $Z$ be an irreducible complex-analytic variety, and let $f:Z\rar X$ be a morphism.
\begin{itemize}
\item[(i)]
Suppose that $Z$ is locally irreducible and 
 $f_*:\pi_1(Z)\rightarrow \pi_1(X)$ is surjective.
Then $Z\times_X\wti{X}$ is irreducible.
\end{itemize}
Let now $Y\subset Z$ be an irreducible and locally irreducible subvariety.
\begin{itemize}
\item[(ii)]
If $\pi_1(Y)\rar\pi_1(X)$ is surjective,
then $Z\times_X\wti{X}$ has finitely many irreducible components.
\item[(iii)]
If the image of $\pi_1(Y)\rar\pi_1(X)$ has finite index,
and $Z\times_X X'$ is irreducible for all finite \'etale covers $p':X'\rar X$,
then $Z\times_X \wti{X}$ is irreducible.
\end{itemize}
\end{prop}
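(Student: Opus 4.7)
The plan is to deduce all three assertions as applications of the irreducibility criterion Corollary \ref{cor:criterion}, using Lemma \ref{surjects} to transfer information from $\pi_1(Y)$ to $\pi_1(Z_{\sm})$.

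Part (i) is immediate: since $Z$ is irreducible and locally irreducible, Lemma \ref{lm:normal}(ii) gives $\pi_1(Z_{\sm})\thra\pi_1(Z)$, and composing with the assumed $\pi_1(Z)\thra\pi_1(X)$ yields $\pi_1(Z_{\sm})\thra\pi_1(X)$; then Corollary \ref{cor:criterion}(i) concludes.

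For (ii), let $H$ and $K$ denote the images of $\pi_1(Y)$ and $\pi_1(Z_{\sm})$ in $\pi_1(Z)$, and let $H'$, $K'$ denote their images under $f_*$ in $\pi_1(X)$. Lemma \ref{surjects} gives $[H:H\cap K]\leq k$, and $f_*$ induces a surjection of coset spaces $H/(H\cap K)\thra H'/f_*(H\cap K)$, whence $[H':f_*(H\cap K)]\leq k$. Since $f_*(H\cap K)\subseteq H'\cap K'\subseteq K'$ and $H'=\pi_1(X)$ by hypothesis, we obtain $[\pi_1(X):K']\leq k$. Applying Corollary \ref{cor:criterion}(ii) with $X'=\wti{X}$ (so that $p'_*\pi_1(X')$ is trivial) identifies the irreducible components of $Z\times_X\wti{X}$ with the coset space $\pi_1(X)/K'$, giving at most $k$ components.

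For (iii), one first checks that $K'$ has finite index in $\pi_1(X)$: the same argument as in (ii) gives $[H':H'\cap K']\leq k$ (with $k$ automatically finite for an analytic variety, since local rings are Noetherian), so since $H'$ has finite index by hypothesis, so does $H'\cap K'\subseteq K'$. Let $X'\rar X$ be a finite \'etale cover with $p'_*\pi_1(X')=K'$. By hypothesis $Z\times_X X'$ is irreducible, so Corollary \ref{cor:criterion}(ii) shows that $\pi_1(X)/\langle K',K'\rangle = \pi_1(X)/K'$ has a single element, i.e., $K'=\pi_1(X)$. Corollary \ref{cor:criterion}(i) then finishes the proof. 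The main point requiring care is the coset-space interpretation of the ``cokernels'' appearing in Corollary \ref{cor:criterion} when $\pi_1(X)$ is non-abelian and the subgroups involved are not normal; after this bookkeeping, the assertions follow formally from the preceding results.
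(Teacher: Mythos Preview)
Your proof is correct and follows essentially the same route as the paper: reduce everything to Corollary~\ref{cor:criterion} via Lemma~\ref{lm:normal}(ii) for (i) and Lemma~\ref{surjects} for (ii)--(iii). Your bookkeeping in (ii), distinguishing $H,K\subseteq\pi_1(Z)$ from their images $H',K'\subseteq\pi_1(X)$ and passing through $f_*(H\cap K)$, is in fact more precise than the paper's slightly elliptical phrasing, but the argument is the same.
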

\begin{proof}
By Lemma \ref{lm:irred}(i), the smooth locus $Z_{\sm}$ is connected.

(i) By Lemma \ref{lm:normal}(ii), the map
$\pi_1(Z_{\sm})\thra\pi_1(Z)$ is surjective.
Thus the composition $\pi_1(Z_{\sm})\rightarrow\pi_1(Z)\rar \pi_1(X)$ is surjective too.
We conclude by Corollary \ref{cor:criterion}(i).


(ii) Since $\pi_1(Y)\rar\pi_1(X)$ is surjective, so is $\pi_1(Z)\rar\pi_1(X)$.
Let $k$ be the number of branches of $Z$ at the general point of $Y$.
By Lemma \ref{surjects}, the image of $\pi_1(Z_{\sm})\rar\pi_1(Z)$ has index at most $k$, and so the same holds for the image of $\pi_1(Z_{\sm})\rar\pi_1(X)$.
The conclusion then follows from Corollary \ref{cor:criterion}(ii) applied to the universal cover $\wti{X}\rar X$.

(iii) Since $Z$ has finitely many branches at the general point of $Y$,
the image of $\pi_1(Z_{\sm})\rar\pi_1(X)$ has finite index by Lemma \ref{surjects}: let $d$ be such index.
Let $p':X'\rar X$ be the \'etale cover of degree $d$ such that $p'_*\pi_1(X')=f_*\pi_1(Z_{\sm})$.
By Corollary \ref{cor:criterion}(ii) the fiber product $Z\times_X X'$ has $d$ irreducible components,
and our hypothesis implies that $d=1$. It follows that $\pi_1(Z_{\sm})\rar\pi_1(X)$ is surjective and we conclude 
by Corollary \ref{cor:criterion}(i).
\end{proof}

The following special case is a direct consequence
of Proposition \ref{prop:irred}(ii).

\begin{cor}[A criterion of irreducibility of $Z\times_X\wti{X}$]\label{cor:irred-crit}
Let $Z$ be an irreducible complex-analytic variety
and $f:Z\rar X$ be a morphism.
Suppose that there exists an irreducible and locally irreducible subvariety $Y\subset Z$
such that $\pi_1(Y)\thra\pi_1(X)$.
Then $Z\times_X\wti{X}$ has finitely many irreducible components.\\
Moreover, if $Y$ intersects the smooth locus of $Z$, then $Z\times_X\wti{X}$ is irreducible.
\end{cor}

%
%
%
%
%

\subsection{The case of orbispaces}

Most of the above results hold in the category of complex-analytic
orbispaces, namely objects locally modelled on $[T/G]$, where $T$ is a complex-analytic space
and $G$ is a finite group that acts on $T$ via biholomorphisms).
In this case, we must use open orbifold charts instead of open subsets, and we must modify the
definition of neighborhoods accordingly. Moreover, the word smooth, singular, unramified cover,
fiber bundle, universal cover and fundamental group must all be understood in the orbifold sense.
Note in particular that an orbispace is smooth where it is locally modelled as $[T/G]$ with $T$ smooth.
An analogous interpretation must be reserved to the words normal, irreducible, locally irreducible.

With the above caveat, the results
in Sections \ref{sec:stratification}-\ref{sec:cpx-irred}-\ref{sec:connected}-\ref{sec:lift} still hold in the setting of complex-analytic orbispaces.


\section{A hyperplane section theorem}\label{sec:LHT}

In the following section we present a generalization (Theorem \ref{thm:divisor}) of the
homotopical Lefschetz hyperplane section theorem (LHT) for $\pi_0$ and $\pi_1$
to smooth quasi-projective varieties that admit a projective model with small boundary. Such result then extended to complete intersections (Corollary \ref{cor:LHT}) and to complete intersections inside orbifolds that are global quotients (Corollary \ref{cor:LHT-orbifold}).\\

We will often mention the following properties of
a variety $\ol{X}$ and of its algebraic loci $\pa X$ and 
$\ol{D}$.
\begin{itemize}
\item[(I)]
$\ol{X}$ is a connected projective variety of dimension $N$
and $\pa X$ be a closed subscheme of dimension at most $N-1$\
such that $X=\ol{X}\setminus\pa X$ is smooth and connected 
\item[(II)]
$\ol{D}\subset \ol{X}$ is the support of an effective, ample Cartier divisor
\item[(III$_h$)]
$\mathrm{codim}(\pa D/\ol{D})\geq h$, where $\pa D=\ol{D}\cap\pa X$.
\end{itemize}
We remark that (III$_h$) is certainly implied by
\begin{itemize}
\item[(III$'_h$)]
$\mathrm{codim}(\pa X/\ol{X})\geq h+1$.
\end{itemize}

The version of LHT we wish to prove is the following.
In the typical application that we have in mind
we will pick as $X$ the moduli space $\calA_g(n)$ for some level $n\geq 3$, as $\ol{X}$ its Satake compactification and as 
$D$ the zero locus of a modular form.

\begin{thm}[Improved LHT for smooth quasi-projective varieties]\label{thm:divisor}
Let $\ol{X}$ be a variety and $\pa X$, $\ol{D}$ be subschemes of $\ol{X}$ such that
properties (I)-(II)-(III$_h$) above hold with $h=2$ or $h=3$.
Then $D=\ol{D}\setminus\pa D$ is connected
and the natural  map $\pi_1(D)\rightarrow\pi_1(X)$ is an isomorphism if $h=3$ (resp. is surjective, if $h=2$).
\end{thm}

The key ingredient of the proof is the 
Lefschetz hyperplane type theorem (LHT) that appears at the very beginning of Part II, Section 5.1 of \cite{GM}.
In particular, we will use the two versions of such theorem
(see the ``furthermore'' below the statement of (LHT))
for general hyperplane (LHT-gen) and for compact hyperplane section (LHT-cpt). A similar idea is already in \cite[Theorem 8.7]{amoros}. 

%
%

%
%

\begin{proof}[Proof of Theorem \ref{thm:divisor}]
Consider then general very ample divisors $\ol{D}_1,\dots,\ol{D}_{N-h}$ in $\ol{X}$
such that
\begin{itemize}
\item[(a)]
$D_i:=\ol{D}_i\setminus \pa D_i$ is smooth, where $\pa D_i=\ol{D}_i\cap \pa X$ 
\item[(b)]
the intersection $E=D_1\cap\dots D_{N-h}$ is transverse, and so $E$
is a smooth variety of dimension $c$
\item[(c)]
$S=E\cap D=E\cap\ol{D}$ is compact, of dimension $h-1$
\item[(d)]
the singular locus of $S$ is contained in the singular locus of $D$.
\end{itemize}
Note that, in order to establish the existence of $S$, we use property (I) in (a)-(b)-(d), property (III$_h$) in (c).

Suppose first that $h=3$.
Properties (II) and (b-c) together with (LHT-cpt)
imply that $S$ is a compact connected surface
and $\pi_1(S)\rightarrow\pi_1(E)$ is an isomorphism.

On the other hand, (b-d) and 
(LHT-gen)
imply that $E$ is connected and
\[
\pi_1(E)\rightarrow \pi_1(D_1\cap\dots D_{N-h-1})
\rightarrow\dots\rightarrow\pi_1(D_1\cap D_2)\rightarrow\pi_1(D_1)\rightarrow\pi_1(X)
\]
are isomorphisms.
Similarly, $S$ intersects every connected component of $D$ and 
\[
\pi_1(S)\rightarrow \pi_1(D\cap D_1\cap\dots D_{N-h-1})
\rightarrow\dots\rightarrow\pi_1(D\cap D_1)\rightarrow\pi_1(D)
\]
are isomorphisms.
It follows that $D$ is connected
and $\pi_1(S)\rightarrow\pi_1(D)\rightarrow\pi_1(X)$ are isomorphisms.\\

If $h=2$,
then we can argue analogously as above and consider
general very ample divisors $\ol{D}_1,\dots,\ol{D}_{N-2}$
such that $E=D_1\cap\dots D_{N-2}$ is a smooth surface and
$C=D\cap E$ is a compact curve.
A similar repeated application of (LHT-gen) and then of (LHT-cpt)
gives
\[
\xymatrix{
\pi_1(C)\ar@{->>}[r] \ar@{->>}[d] & \pi_1(D) \ar[d]\\
\pi_1(E) \ar[r]^{\cong\ } & \pi_1(X),
}
\]
from which we conclude that $\pi_1(D)\rar\pi_1(X)$ is surjective.
\end{proof}

An analogous statement holds for complete intersections
$\ol{Z}$, by replacing properties (III)-(III$_h$) by
\begin{itemize}
\item[(III$^{ci}$)]
$\ol{Z}\subset\ol{X}$ is the support of a complete intersection of Cartier divisors,
whose classes in $X$ are all proportional to the same ample class in $H^2(X;\QQ)$,
\item[(III$^{ci}_h$)]
$\mathrm{codim}(\pa Z/\ol{Z})\geq h$, where $\pa Z=\ol{Z}\cap\pa X$.
\end{itemize}

More precisely, we have the following.

\begin{cor}[Improved LHT for complete intersections]\label{cor:LHT}
Suppose that $\ol{X}$, $\pa X$ and $\ol{Z}$ satisfy (I)-(III$^{ci}$)-(III$^{ci}_h$) above
for $h=2$ or $h=3$. Then $Z=\ol{Z}\setminus\pa Z$ is connected and
$\pi_1(Z)\rar\pi_1(X)$ is an isomorphism if $h=3$ (resp. is surjective, if $h=2$).
\end{cor}
\begin{proof}
By (III$^{ci}$) it is possible to embed $\ol{X}$ inside some projective space $\PP$
in such a way that $\ol{Z}=\ol{X}\cap H$ for some linear subspace $H\subset\PP$
of codimension $\codim(H/\PP)=\codim(\ol{Z}/\ol{X})$.
We then proceed analogously to the proof of Theorem \ref{thm:divisor},
replacing the role of $\ol{D}$ there by $\ol{Z}$.
\end{proof}

\begin{rem} Assuming that  $\ol{X}$, $\pa X$ and $\ol{Z}$ satisfy (I)-(III$^{ci}$)-(III$^{ci}_h$) above, it is  immediate  that for higher values of $h$ we have similar results about the higher homotopy groups.  In  particular we obtain that
 $\pi_i(Z)\rar\pi_i(X)$ is an isomorphism if $ i\leq h-2$, and $\pi_{h-1}(Z)\rar\pi_{h-1}(X)$ is surjective.
\end{rem}

\subsection*{The case of orbispaces}

A version of LHT as in Section \ref{sec:LHT}
can be also phrased for orbifolds that are global quotients.
Let $\ol{X}$ be an analytic space and let $\pa X$ and $\ol{Z}$ be subspaces of $\ol{X}$.
Suppose moreover that a finite group $G$ acts on $\ol{X}$, preserving
$\pa X$ and $\ol{Z}$, and so preserving $Z=\ol{Z}\setminus\pa X$.

\begin{cor}[Improved LHT for c.i.~inside global quotients]\label{cor:LHT-orbifold}
Suppose that $\ol{X}$, $\pa X$ and $\ol{Z}$ satisfy properties (I)-(III$^{ci}$)-(III$^{ci}_h$)
in Section \ref{sec:LHT} with $h=2$ or $h=3$.
Then 
\begin{itemize}
\item[(i)]
the locus $[Z/G]$ inside the orbifold $[X/G]$ is connected
and the homomorphism $\pi_1([Z/G])\rar\pi_1([X/G])$ 
of orbifold fumdamental groups is an isomorphism if $h=3$ (resp. is surjective, if $h=2$);
\item[(ii)]
the preimage of $[Z/G]$ via an unramified cover of $[X/G]$ is connected.
\end{itemize}
\end{cor}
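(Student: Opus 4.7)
The plan is to reduce to the already-proved non-orbifold Corollary \ref{cor:LHT} by exploiting the standard short exact sequence of orbifold fundamental groups for a global quotient. Applying Corollary \ref{cor:LHT} to the triple $(\ol{X}, \pa X, \ol{Z})$ under the hypotheses (I)-(III$^{ci}$)-(III$^{ci}_h$), I obtain that $Z = \ol{Z}\setminus \pa Z$ is connected and that $\pi_1(Z)\rar \pi_1(X)$ is an isomorphism when $h=3$ (respectively a surjection when $h=2$). Since $[Z/G]$ is the continuous image of the connected space $Z$ under the quotient map $Z \to [Z/G]$, it is connected as well.

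For the fundamental-group claim in (i), I would use the standard exact sequence $\pi_1(T) \to \pi_1([T/G]) \to G \to 1$, valid for any connected analytic space $T$ on which the finite group $G$ acts, and additionally exact on the left when $T$ is smooth. The $G$-equivariant inclusion $Z\hra X$ assembles these into a commutative ladder
\[
\xymatrix{
& \pi_1(Z) \ar[r]\ar[d] & \pi_1([Z/G]) \ar[r] \ar[d] & G \ar[r]\ar@{=}[d] & 1 \\
1 \ar[r] & \pi_1(X) \ar[r] & \pi_1([X/G]) \ar[r] & G \ar[r] & 1
}
\]
A direct diagram chase transports the hypothesis on the left column (surjection, respectively isomorphism) to the same conclusion on the middle column. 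In the isomorphism case, one additionally exploits the injectivity of $\pi_1(X) \to \pi_1([X/G])$ (coming from smoothness of $X$) to kill any element of $\pi_1([Z/G])$ that maps trivially to $\pi_1([X/G])$.

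For part (ii), the orbifold analogue of Lemma \ref{lm:lift}(iii) (or of Lemma \ref{lm:finite-lift} for finite unramified covers) reduces the connectedness of the preimage of $[Z/G]$ in an unramified cover of $[X/G]$ to surjectivity of the composite $\pi_1([Z/G]) \rar \pi_1([X/G]) \rar \pi_1([X/G])/H$, where $H$ is the subgroup classifying the cover. Since $\pi_1([Z/G]) \rar \pi_1([X/G])$ is already surjective in both cases by (i), the composite remains surjective and (ii) follows at once.

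The principal technical step I anticipate is establishing the short exact sequence $1 \to \pi_1(X) \to \pi_1([X/G]) \to G \to 1$ in the orbifold category. This is a standard consequence of the Borel-construction fibration $X \to EG \times_G X \to BG$, using that $EG \times_G X$ realizes the homotopy type of the orbifold $[X/G]$ and that $\pi_2(BG) = 0$ since $G$ is discrete; some care is needed in interpreting the relevant homotopy groups when the analytic subvariety $Z$ is singular, but this is the content of the caveat in Section \ref{sec:orbispaces}. Once the sequence is in place, the rest of the argument is formal diagram-chasing.
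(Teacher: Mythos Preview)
Your proof is correct and follows essentially the same approach as the paper. The paper's proof is a two-line citation---Corollary \ref{cor:LHT} for (i) and Corollary \ref{cor:connected-LHT}(ii) for (ii)---leaving the passage from $\pi_1(Z)\to\pi_1(X)$ to $\pi_1([Z/G])\to\pi_1([X/G])$ implicit under the blanket remark that ``Sections \ref{sec:stratification}--\ref{sec:lift} go through'' in the orbifold setting; you have simply made that step explicit via the ladder of short exact sequences and the five-lemma--style chase, which is exactly what the paper intends the reader to supply.

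One minor point: your hedge that the sequence $\pi_1(T)\to\pi_1([T/G])\to G\to 1$ is ``additionally exact on the left when $T$ is smooth'' is overcautious. As you yourself note in your final paragraph, the Borel fibration $T\to EG\times_G T\to BG$ exists for any $G$-space $T$, and $\pi_2(BG)=0$ for discrete $G$ gives injectivity on the left regardless of whether $T$ is smooth. This does not affect your argument---you only invoke left-exactness for $X$, which is smooth---but the distinction you draw is unnecessary.
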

\begin{proof}
Both claims follow from Corollary \ref{cor:LHT}.
\end{proof}

\section{Absolutely irreducible divisors}\label{sec:absol}

Recall that in Section \ref{sec:main} we called a modular form ``absolutely irreducible'' if it cannot be written as a product of two nonconstant modular forms. Such definition motivates the following generalizations.

%

\begin{dfx}[Absolutely irreducible and universally irreducible subvarieties]
A subvariety $Z$ of $X$ is {\it{absolutely irreducible}}
if its preimage through every finite \'etale cover $X'\rar X$ is irreducible.
Such $Z$ is {\it{universally irreducible}} if its preimage through the universal
cover $\wti{X}\rar X$ is irreducible.
\end{dfx}

Obviously, a universally irreducible subvariety is absolutely irreducible.
Below we exhibit a sufficient condition for an absolutely irreducible
subvariety to be universally irreducible.

The key observation is the following.

\begin{prop}[When $\wti{Z}$ has finitely many irreducible components]\label{prop:finitely-many-irr}
If the preimage $\wti{Z}$ of $Z\subset X$ inside $\wti{X}$
has finitely many irreducible components, then
there exists a finite \'etale cover $X'\rar X$ such that the preimage $Z'$
of $Z$ inside $X'$ has finitely many universally irreducible components.
\end{prop}
\begin{proof}
Decompose then $\wti{Z}$ into the union
\[
\wti{Z}=\bigcup_{i\in I}\wti{Z}_i
\]
of its irreducible components $\wti{Z}_i$.

The group $\Lambda:=\pi_1(X)$ acts on $\wti{Z}$ via biholomorphisms, and permutes its irreducible components.  
Hence we get a homomorphism
\[
\rho:\Lambda\lra \SY(I),
\]
where $\SY(I)$ is the symmetric group on $I$.
Consider the normal subgroup $\Lambda'=\mathrm{ker}(\rho)$ of $\Lambda$.
Since $I$ is finite, $\Lambda'$ has finite index in $\Lambda$
and so $X':=\wti{X}/\Lambda'$ is a finite \'etale cover of $X$.
Let $Z':=\wti{Z}/\Lambda'$ the subvariety of $X'$
obtained as inverse image of $Z$ through the cover $X'\rar X$,
and let $Z'_i:=\wti{Z}_i/\Lambda'$.
Clearly, $Z'_i$ is universally irreducible and so
$Z'=\bigcup_{i\in I}Z'_i$ is the decomposition of $Z'$ into its finitely many universally irreducible components.
\end{proof}

%

\begin{cor}[Promoting absolutely irreducible to universally irreducible]\label{cor:promoting}
Let $X$ be a connected variety, with universal cover $\wti{X}$,
and let $Z\subset X$ be an absolutely irreducible subvariety
whose inverse image $\wti{Z}\subset \wti{X}$ has a finite number of irreducible components. Then $Z$ is universally irreducible.
\end{cor}
\begin{proof}
By Proposition \ref{prop:finitely-many-irr} there exists a finite \'etale cover
$X'\rar X$ such that the preimage $Z'$ of $Z$ inside $X'$ is the union of finitely many universally irreducible subvarieties.
Since $Z$ is absolutely irreducible, such $Z'$ is itself irreducible.
It follows that $\wti{Z}$ is irreducible.
\end{proof}

 \begin{rem}
 We observe that in this case we do not require any hypothesis on $\pi_1(Z)$ and  we do not assume that $Z$ is locally irreducible.
 \end{rem}
 
Here is an immediate consequence of the above observations.
 

\begin{cor}[Absolutely irreducible plus finite index implies universally irreducible]\label{cor:total}
Let $Z$ be a subvariety of $X$ such that the image of $\pi_1(Z_{\sm})\rar\pi_1(X)$ has finite index.
Then the preimage $\wti{Z}$ of $Z$ inside $\wti{X}$ has finitely many irreducible components.
In particular, if $Z$ is an absolutely irreducible subvariety of $X$,
then $Z$ is universally irreducible.
\end{cor}
\begin{proof}
The preimage $\wti{Z}$ of $Z$ inside $\wti{X}$
has finitely many irreducible components by Lemma \ref{lm:lift}(ii).
The conclusion follows from Proposition \ref{prop:finitely-many-irr} and
Corollary \ref{cor:promoting}.
\end{proof}

Now we focus on the case of divisors.
In order to ensure that, up to a finite \'etale cover, a divisor splits into the union
of absolutely irreducible divisors, we will consider projective varieties $\ol{X}$
with a subscheme $\pa X$ that satisfy the following property:
\[
\text{(IV)}\begin{cases}
\text{the codimension of $\pa X$ inside $\ol{X}$ is at least two (III$'_1$);}\\
\text{$\mathrm{Pic}(\ol{X})/\!\text{tors}=\ZZ\cdot \alpha$ with $\alpha$ ample;}\\
\text{the Picard number of $X'$ is $1$ and the divisibility
of $\alpha$ in $\mathrm{Pic}(X')/\!\text{tors}$}\\
\text{is uniformly bounded from above,
for every finite \'etale cover $X'\rar X$,}
\end{cases}
\]
where $X=\ol{X}\setminus\pa X$.
%
%
%
The following is essentially due to Freitag (see \cite[Theorem 4.7]{FR} for the case
of a finite \'etale cover $X$ of $\calA_g$).

\begin{prop}[Existence of absolutely irreducible divisors]\label{prop:prime}
Let $\ol{X}$ be a projective variety with a subscheme $\pa X$
that satisfy properties (I) and (IV).
%
Then, for every effective Cartier divisor $\ol{D}\subset\ol{X}$,
there exists a finite \'etale cover $X'\rar X$ such that the preimage $D'\subset X'$
of $D\subset X$ is the union of finitely many absolutely irreducible divisors.
\end{prop}
\begin{proof}
By property (IV) there an integer $d>0$ such that $[\ol{D}]=d\cdot \alpha$.
Let now $X'\rar X$ be any finite \'etale cover and let $\ol{X}'$ be the normalization of $\ol{X}$ in the function field of $X'$.
Since $\ol{X}'$ is projective and $\pa X'$ has codimension at least two in $\ol{X}'$, property (IV) ensures that
$\mathrm{Pic}(\ol{X}')/\!\text{tors}\subseteq\ZZ\cdot(\alpha/d_0)$ for some integer $d_0\geq 1$.
Thus, each effective Cartier divisor in $\ol{X}'$ must be a positive multiple of $\alpha/d_0$. In particular, the divisor $D'$ obtained by pulling back $D$ via $X'\rar X$ has class $d\cdot\alpha$, and so
it can have at most $d\cdot d_0$ irreducible components.
It is then enough to pick $X'$ to be a finite \'etale cover of $X$ on which the number of irreducible components of $D'$ is maximal.
\end{proof}

\subsection*{The case of orbispaces}

%

The definition of absolutely irreducible and universally irreducible divisors
can be verbatim understood in the orbispace setting.
Thus, if $(\ol{X},\pa X)$ satisfy (I) and (IV), then
the conclusion of Proposition \ref{prop:prime} holds for every effective
Cartier divisor in $[\ol{X}/G]$.
The assertion is an immediate consequence of Proposition \ref{prop:prime}
and Corollary \ref{cor:promoting}.
As further examples, Proposition \ref{prop:finitely-many-irr}, Corollary \ref{cor:promoting} and Corollary \ref{cor:total}
verbatim hold in the setting of orbispaces.

\section{Subvarieties of $\calA_g(\Gamma)$ }\label{sec:Ag}

\subsection{Siegel upper half-space, congruence subgroups and multipliers}\label{ssc:siegel}
We denote by $\HH_g=\{\tau\in\op{Mat}_{g\times g}(\CC)\mid \tau=\tau^t, \op{Im}\tau>0\}$ the {\it{Siegel upper half-space}} of symmetric matrices with positive definitive imaginary part. It is a homogeneous space for the action of $\Sp_{2g}(\RR)$, where an element
$$
 \gamma=\left(\begin{matrix} A & B  \\ C &  D\end{matrix}\right) \in \Sp_{2g}(\RR)
$$
acts via
$$
\gamma\cdot \tau:=(A\tau+B)(C\tau+D)^{-1}.
$$
Such action can be lifted to $\widetilde{\mathcal{L}}:=\HH_g\times \CC$ as
\[
\gamma\cdot (\tau,u):=\left((A\tau+B)(C\tau+D)^{-1} , \mathrm{det}(C\tau+D)\cdot u\right),
\]
so that the projection $\widetilde{\mathcal{L}}\rar\HH_g$ onto the second factor
descends to a holomorphic line bundle on $\mathcal{L}\rar \calA_g(\Gamma)$ for every subgroup $\Gamma$ of $\Gamma_g$.


\begin{nota}
Given a subvariety $\calY\subset\calA_g$, we will write $\wti{\calY}$ to denote
its preimage inside $\HH_g$,
$\calY(\Gamma)$ to denote its preimage in $\calA_g(\Gamma)$
and briefly
$\calY(n)$ for its preimage in $\calA_g(n)$.
\end{nota}

We assume $g\geq 2$ and $k$ half-integral. 
Consider now a subgroup $\Gamma$ of finite index of $\Gamma_g$.

 \begin{dfx}
A {\em{multiplier of weight $k$}} is  a map
 $\chi: \Gamma \to \CC^*$ such that $G(\gamma,\tau):=\chi(\gamma)\det(C\tau+D)^k$ satisfies the cocycle condition 
\[
G(\gamma'\gamma,\tau)=G(\gamma',\gamma\cdot\tau)G(\gamma,\tau)
\]
for all $\tau\in\HH_g$ and all $\gamma',\gamma\in\Gamma$.
A function $F:\HH_g\to\CC$ is called a {\em modular form of weight $k$ and multiplier $\chi$}
with respect to  $\Gamma$ if
 $$
  F(\gamma\circ\tau)=\chi(\gamma)\det(C\tau+D)^kF(\tau),\quad \forall \gamma
  \in\Gamma,\ \forall \tau\in\HH_g.
$$
\end{dfx}

Note that, for every $\gamma$ the function $\tau\mapsto \det(C\tau+D)$
has a holomorphic square root, which we denote by $\sqrt{\det(C\tau+D)}$.
When $k$ is half-integral, the expression $\det(C\tau+D)^k$
must be interpreted as $\sqrt{\det(C\tau+D)}^{2k}$.
When $k$ is integral, no square root is needed and the multiplier can be chosen to be just a character. We refer to \cite[Section I.3]{FR} for more details on modular forms and multipliers.

Denote by $A(\Gamma)$ the $\CC$-algebra of all modular forms (possibly with multiplier) with respect to $\Gamma$, which is finitely generated
(see \cite[Theorem 6.11]{F83}).

\subsection{Some topological properties of $\calA_g$}\label{ssc:topological-Ag}

%

We recall that $\Gamma_g$ is finitely generated 
(see \cite[Hilfssatz 2.1]{mennicke}) and
so $\Gamma$ is. This in particular implies that the cohomology
of $\calA_g(\Gamma)$ is finitely generated. 

The {\it{Satake compactification}} $\ol{\calA}_g(\Gamma):=\mathrm{Proj}(A(\Gamma))$
of the coarse space of $\ol{\calA}_g(\Gamma)$ is a normal projective variety
(which may be singular for $g\geq 2$)
and so the coarse space of $\calA_g(\Gamma)$ can be regarded as a normal
quasi-projective variety. 
Moreover its boundary $\pa\calA_g(\Gamma):=\ol{\calA}_g(\Gamma)\setminus\calA_g(\Gamma)$ has pure codimension $g$.
%
The boundary of $\ol{\calA}_g$ has a natural stratification, whose locally closed strata can be identified to $\calA_k$ for $k=0,\dots,g-1$ (see \cite[Chapter V.2]{faltings-chai}, for instance).

Now, assume $g\geq 2$ so that
$\Gamma$ contains a principal congruence
subgroup $\Gamma_g(n)$ by \cite{bms}, and so
$\calA_g$ is covered by some $\calA_g(n)$. Let us fix such an $n\geq 3$.

We recall the following:
\begin{itemize}
\item[(a)]
$\calA_g(n)$ is smooth and connected
(since $\HH_g$ is smooth and connected and $\Gamma_g(n)$ acts freely).
\item[(b)]
Since $\ol{\calA}_g(n)$ is normal
and its boundary has codimension at least two,
the (algebraic or analytic)
Picard groups of $\ol{\calA}_g(n)$ and $\calA_g(n)$ coincide.
\item[(c)]
The algebraic and analytic Picard groups of $\ol{\calA}_g(n)$ coincide (use \cite{serre}), and the same holds for $\calA_g(n)$.
\item[(d)]
$H^1(\calA_g(n);\QQ)=0$ (see \cite{kazdan}) and so $H_1(\calA_g(n);\ZZ)$ is a finite group; $H^2(\calA_g(n);\QQ)$ has dimension $1$ for $g\geq 3$ 
(see \cite{borel1} and \cite{borel2}) and it is generated by
the ample class $\lambda=c_1(\mathcal{L}(n))$.
\item[(e)]
For $g\geq 3$ the natural map $\Pic(\calA_g(n))\rar H^2(\calA_g(n);\ZZ)$ is an isomorphism
(as in \cite{Fr}).
\item[(f)]
Each effective divisor in $\calA_g(n)$ is the zero set
of a modular form of half-integral weight $a/2$, possibly with a multiplier (cf.~\cite{Fr}, \cite{deligne} and \cite{FR}). Its class in $H^2(\calA_g(n);\ZZ)/\mathrm{tors}$
is $a\lambda/2$.
\item[(f')]
$H^2(\calA_g(n);\ZZ)/\mathrm{tors}$ is generated by $\lambda$ or by $\lambda/2$.
\item[(g)]
Every effective
divisor in $\ol{\calA}_g(n)$ is the closure of a divisor in $\calA_g(n)$ and it is ample.
Moreover its boundary has codimension $g$ or $g-1$ inside it.
\end{itemize}

Note that (f') is a consequence of (f)
and that (g) depends on the fact that $\pa\calA_g(n)$ has codimension at least two.

Together with the above discussion, (a) and (f)
imply that $(\calA_g(n),\pa\calA_g(n))$ satisfied 
property (I) introduced in Section \ref{sec:LHT}
and property (IV) introduced in Section \ref{sec:absol}
for all $g\geq 3$ and $n\geq 3$.

Suppose now that $\Gamma$ is a subgroup of $\Gamma_g$
of finite index. Then all properties (b-g) still hold
for $\calA_g(\Gamma)$. Moreover, if $\Gamma$
is contained in a $\Gamma_g(n)$
for some $n\geq 3$, then property (a) holds too.

\subsection{Irreducible modular forms}

We begin with a consequence of the Lefschetz hyperplane theorem
as  stated  in Theorem \ref{thm:divisor}. 

\begin{thm}[Connectedness of zero loci of modular forms]\label{thm:connectedness}
Let $g\geq 3$ and let $\calA_g(\Gamma)$ for some finite-index subgroup $\Gamma$ of $\Gamma_g$. Then every divisor $D$ in $\calA_g(\Gamma)$ pulls back to a connected divisor $\wti{D}$ in $\HH_g$.
Moreover, if $D$ is locally irreducible, then $\wti{D}$ is irreducible.\\
For $g=2$ the same conclusions hold if 
$\partial{D}$ consists of finitely many points.
\end{thm}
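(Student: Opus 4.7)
The plan is to deduce the theorem from the improved Lefschetz hyperplane theorem proven in Section \ref{sec:LHT}, combined with the lifting criteria of Sections \ref{sec:connected}--\ref{sec:lift}. First I would pass to a torsion-free finite-index normal subgroup $\Gamma''\subset\Gamma'$ (e.g.~$\Gamma''=\Gamma'\cap\Gamma_g(n)$ for $n\geq 3$), so that $\calA''_g:=\HH_g/\Gamma''$ is a smooth quasi-projective variety and $\calA'_g=[\calA''_g/G]$ with $G=\Gamma'/\Gamma''$ finite. I would then take $\ol{X}=\ol{\calA}''_g$ to be the Satake compactification and $\pa X$ its boundary. Property (I) is immediate. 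For property (II), I use that $\mathrm{Pic}(\ol{\calA}''_g)/\mathrm{tors}$ has rank one, generated by an ample multiple of the Hodge class $\lambda$; since every divisor $D''\subset\calA''_g$ is cut out by a modular form, its closure $\ol{D}''$ is supported on a positive multiple of $\lambda$, hence is ample.

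The key computation is to verify the codimension condition (III$_2$) on $\pa D'' = \ol{D}''\cap\pa X$. For $g\geq 3$, the Satake boundary $\pa X$ has codimension $g\geq 3$ in $\ol{X}$, so $\pa D''$ has codimension at least $g-1\geq 2$ inside the hypersurface $\ol{D}''$. For $g=2$, the boundary $\pa X$ is $1$-dimensional; under the hypothesis that $\ol{D}''$ contains no irreducible component of $\pa\calA'_2$, the locus $\pa D''$ is a proper subvariety of $\pa X$, hence finite, giving codimension $2$ inside the surface $\ol{D}''$. In both cases, Proposition \ref{prop:divisor} applies and yields that $D''$ is connected and that $\pi_1(D'')\thra\pi_1(\calA''_g)$ is surjective. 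Passing through the finite group quotient by $G$ via Corollary \ref{cor:LHT-orbifold} then gives that $\pi_1(D')\thra\pi_1(\calA'_g)=\Gamma'$ is surjective.

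From here, Lemma \ref{lm:lift}(iii) applied to the inclusion $D'\hra\calA'_g$ and the universal cover $\HH_g\rar\calA'_g$ immediately yields connectedness of $\wti{D}$. For the additional irreducibility statement under the local irreducibility hypothesis on $D'$, I would invoke Proposition \ref{prop:irred}(i): since $D'$ is irreducible, locally irreducible, and $\pi_1(D')\thra\pi_1(\calA'_g)$ is surjective by the previous step, the conclusion gives that $\wti{D}=D'\times_{\calA'_g}\HH_g$ is irreducible.

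The main delicate point will be the interplay between the smooth-variety version of LHT (Proposition \ref{prop:divisor}) and its orbifold formulation (Corollary \ref{cor:LHT-orbifold}), in particular checking that surjectivity at the level of $\calA''_g$ passes correctly to $\calA'_g$; fortunately this is exactly what Corollary \ref{cor:LHT-orbifold} is designed to handle, so no further delicacy arises beyond a careful bookkeeping of the covers involved. The genus-$2$ case is not really an obstacle but simply requires the explicit geometric assumption that $\ol{D}'$ avoids a component of the Satake boundary, which restores (III$_2$) that would otherwise fail since the boundary has codimension only $2$.
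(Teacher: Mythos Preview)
Your proposal is correct and follows essentially the same route as the paper: pass to a torsion-free level so that the Satake compactification satisfies (I)--(II)--(III$_2$), apply Proposition~\ref{prop:divisor} to get $\pi_1$-surjectivity, then invoke Lemma~\ref{lm:lift}(iii) for connectedness and Proposition~\ref{prop:irred}(i) for irreducibility under the local-irreducibility hypothesis. The only cosmetic difference is that the paper dispenses with Corollary~\ref{cor:LHT-orbifold} altogether: since $\wti{D}\subset\HH_g$ is the same preimage whether computed from $\calA'_g$ or from the deeper smooth level $\calA_g(n)$, one can simply work at that level throughout and never return to the orbifold quotient, making your bookkeeping step unnecessary (though certainly not wrong).
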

\begin{proof}
Up to replacing $\Gamma$ by $\Gamma_g(n)\subset\Gamma$ for a suitable $n\geq 3$, and $D$ by its pull-back
to $\calA_g(n)$, we can assume that $\calA_g(\Gamma)$ is a smooth variety
and we let $\ol{\calA}_g(\Gamma)$ be its Satake compactification with $\pa\calA_g(\Gamma)=\ol{\calA}_g(\Gamma)\setminus\calA_g(\Gamma)$.
By what we have recalled in Section \ref{ssc:topological-Ag}, $\ol{D}$ is an ample Cartier divisor in $\ol{\calA}_g(\Gamma)$
and so, for $g\geq 3$, the triple $(\ol{\calA}_g(\Gamma),\pa\calA_g(\Gamma),\ol{D})$ satisfy properties
(I)-(II)-(III$'_2$)
introduced in Section \ref{sec:LHT}.
By Theorem \ref{thm:divisor} it follows that $\pi_1(D)\rar\Gamma$ is surjective,
and so $\wti{D}$ is connected by Lemma \ref{lm:lift}(iii).
The second claim is a consequence of Proposition \ref{prop:irred}(i).\\
For $g=2$ it is enough to note that the required condition ensures that $\ol{D}$
satisfies (III$_2$), so that Theorem \ref{thm:divisor} still applies.
\end{proof}

Note that, for $g\geq 4$, the same argument as above
shows that $\pi_1(D)\cong\Gamma$ and so its lift $\wti{D}$ is simply connected.

\begin{example}[Eisenstein series]
For every $g$  we write  $\Gamma_{g,0}\subset \Gamma_{g}$ for the subgroup defined by $C=0$.
For every integer $k>\frac{g+1}{2}$,  the Eisenstein series
\[
E_{2k}(\tau):=\sum_{\gamma\in\Gamma_g/\Gamma_{g,0}} \det(C\tau+D)^{-2k},\qquad
\text{where}\quad\gamma=
\left(\begin{array}{cc}
A & B\\
C & D
\end{array}\right)
\]
is a modular form  of weight $2k$ and determines a divisor $\ol{D}$ in $\ol{\calA}_g$ that does not contain $\pa\calA_g$. It follows that $\wti{D}\subset\HH_g$ is connected for $g\geq 2$.
\end{example}

In some cases the generic element $\ol{E}$ in the linear system of a divisor
$\ol{D}\subset \ol{\calA}_g(\Gamma)$ has smooth internal part $E=\ol{E}\cap\calA_g(\Gamma)$: for example, this occurs by Bertini if
$|\ol{D}|$ is very ample away from the boundary. One can thus
apply Theorem \ref{thm:connectedness}
to conclude that the preimage of $E$ in $\HH_g$ is irreducible.

Since the Satake compactification of $\calA_g(\Gamma)$ is  
${\rm Proj} (A(\Gamma))$, the linear system of modular forms with respect to $\Gamma$ of sufficiently divisible weight is certainly very ample on $\ol{\calA}_g(\Gamma)$.
On the other hand, 
given an arbitrary half-integral weight and a system of multipliers, there exists a 
cofinal subset $\{\Gamma_g[n,2n]\}_n$ of finite-index subgroups of $\Gamma_g$
such that
the linear system of modular forms with the given weight and multipliers is very ample on every $\calA_g(\Gamma_g[n,2n])$ for $n\geq 3$
(see \cite[Theorem 10.14]{mumford:tata3} and \cite{SM96}).
As consequence, we have that for high enough level (in a cofinal set), modular forms of weight $1/2$ give an embedding of the modular variety.\\

Our aim now is to investigate the irreducibility of the preimage of
{\it{any}} divisor of $\calA_g(\Gamma)$.
In order to do that, we need to recall the following 
result from {\cite[Theorem 4.7]{FR}}, which can be also obtained as a consequence of Proposition \ref{prop:prime}.


\begin{lm}[Factorization into absolutely irreducible forms]\label{lm:factorization}
For $g\geq 3$ the following hold.
\begin{itemize}
\item[(i)]  
Each  non-vanishing modular form of weight $1/2$ is absolutely irreducible.
 \item[(ii)] 
Let $F$ be a non-vanishing  modular form with respect to a finite-index subgroup $\Gamma$
of $\Gamma_g$. 
There exists a factorization of $F$ as
$$F= f_1^{r_1}\cdot\dots\cdot f_u^{r_u},$$
where $r_1,\dots,r_u$ are positive integers and $f_1,\dots,f_u$ are distinct, absolutely irreducible, modular forms.
Moreover, such factorization is unique up to reordering the $f_i$'s. 
\end{itemize}
\end{lm}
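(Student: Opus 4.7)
The plan is to combine two key ingredients: the minimum-weight statement recalled just before the lemma (for $g\geq 2$, the smallest weight of a non-zero modular form on any finite-index subgroup of $\Gamma_g$ equals $1/2$, by \cite[Theorem 4.6]{FR}), and the Picard-theoretic package of property (IV) --- namely that $\Pic(X')/\mathrm{tors}$ is of rank one for every finite \'etale cover $X'\to \calA'_g$, generated by a rational multiple of $\lambda$ whose divisibility is uniformly bounded. The latter is exactly what made Lemma \ref{lemma:prime} applicable, and I would invoke Lemma \ref{lemma:prime} directly for the existence part of (ii).

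For (i), I proceed by contradiction. Suppose $F\in [\Gamma',1/2,\chi]$ is non-vanishing but $\op{div}(F)\subset \calA'_g$ is not absolutely irreducible. Then on some finite \'etale cover $\pi\colon \calA''_g\to \calA'_g$ the pull-back divisor decomposes as $D_1+D_2$ with $D_1,D_2$ effective and non-zero. Because $\Pic(\calA''_g)/\mathrm{tors}$ is of rank one, each $\calO(D_i)$ is a positive rational tensor power of $\lambda$, and so $D_i=\op{div}(f_i)$ for some non-zero modular form $f_i$ of weight $w_i>0$. The two sections $\pi^*F$ and $f_1 f_2$ of the same line bundle differ by an invertible holomorphic function on $\calA''_g$, which extends across the small Satake boundary (codimension $\geq g\geq 3$) to the projective variety $\ol{\calA}''_g$ and is hence constant. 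Absorbing this constant yields $\pi^*F=f_1 f_2$ with $w_1+w_2=1/2$, contradicting $w_i\geq 1/2$.

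For (ii), existence, I apply Lemma \ref{lemma:prime} to the Satake closure $\ol{D}'=\overline{\op{div}(F)}\subset \ol{\calA}'_g$: the pair $(\ol{\calA}'_g,\pa\calA'_g)$ satisfies (I) and (IV) for $g\geq 3$, so on a suitable finite \'etale cover $\pi\colon X'\to \calA'_g$ the divisor $\op{div}(\pi^*F)$ decomposes as $\sum_{i=1}^u r_i E_i$ with the $E_i$ absolutely irreducible. Each $E_i$ is again a positive rational multiple of $\lambda|_{X'}$ in $\Pic(X')$, so $E_i=\op{div}(f_i)$ for a reduced, absolutely prime modular form $f_i$ on $X'$; comparing divisors and using the same Koecher-type argument as in (i), $\pi^*F=c\prod_i f_i^{r_i}$ for some constant $c\in \CC^\times$, which may be absorbed into any single $f_i$. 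For uniqueness, any two such factorizations induce two decompositions of $\op{div}(F)$ into absolutely irreducible effective Weil divisors; such a decomposition is unique on the Noetherian scheme $\calA'_g$, which fixes the $E_i$ and the multiplicities $r_i$, and then each $f_i$ is determined up to a constant by the boundary-codimension argument.

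The main obstacle I anticipate is not the divisorial bookkeeping, which is routine once Lemma \ref{lemma:prime} is in hand, but rather the \emph{promotion} of the divisor-level decomposition to a multiplicative factorization of $F$ as an analytic function on $\HH_g$. Two separate ingredients are needed: first, passing to a finite \'etale cover on which every absolutely irreducible component of $\op{div}(F)$ becomes principal, which requires the uniform divisibility bound in (IV); and second, killing the remaining holomorphic unit, which relies on $\pa X'$ having codimension at least two in $\ol X'$ so that non-vanishing holomorphic functions on $X'$ extend to the projective compactification and are therefore constant.
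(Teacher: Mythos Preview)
Your argument is correct and follows exactly the route the paper indicates: the paper does not spell out a proof but simply attributes the result to \cite[Theorem 4.7]{FR} and remarks that it ``can be also obtained as a consequence of Lemma \ref{lemma:prime}'', and you have unpacked precisely that. Your use of the minimum-weight bound $1/2$ for (i), of Lemma \ref{lemma:prime} together with property (IV) for the existence in (ii), and of the codimension of the Satake boundary to kill the holomorphic unit, is the intended argument.
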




Note that Lemma \ref{lm:factorization} does not hold for $g=2$: an example
is discussed in Section \ref{sec:tn2}.

As a preparation for 
a deeper investigation regarding liftings of subvarieties of $\calA_g(\Gamma)$ to $\HH_g$,
we will first discuss the case of theta constants.

\subsection{Theta functions}\label{sec:theta}
The {\it{(first-order) theta function $\vartheta:\HH_g\times\CC^g\rar\CC$}}
is defined as
\[
\vartheta(\tau,z):=\sum\limits_{n\in\ZZ^g} \exp \pi i \left(
n^t\tau n+2n^t z\right)
\]
and it is even in $z$. 

For $\ep,\de\in (\ZZ/2)^g$
we define { \it the (first order) theta function 
$\t\chars\ep\de:\HH_g\times\CC^g\rar\CC$
with characteristic $\chars\ep\de$} as
$$
\tt\ep\de(\tau,z):=\sum\limits_{n\in\ZZ^g} \exp \pi i \left[\left(
n+\frac{\ep}{2}\right)^t\tau \left(n+\frac{\ep}{2}\right)+2\left(n+\frac{\ep}{2}\right)^t\left(z+
\frac{\de}{2}\right)\right]
$$
and note that $\t\chars{0}{0}=\t$.

The {\it{characteristic}} $\chars{\ep}{\de}\in(\ZZ/2)^{2g}$ is called {\it{even}} or {\it{odd}} depending on whether the scalar product $\ep\cdot\de$ is zero or one as an element of $\ZZ/2$. It turns out that there are $2^{g-1}(2^g+1)$ even characteristics and $2^{g-1}(2^g-1)$ odd ones.
As a function of~$z$, the theta function $\t\chars{\ep}{\de}(\tau,z)$ is even (resp.~odd) 
if its characteristic is even (resp.~odd). 

The {\it{theta constant $\theta\chars{\ep}{\de}:\HH_g\rar\CC$}},
already defined in Section \ref{sec:main},
is the value of theta function $\t\chars{\ep}{\de}$ at $z=0$.
Note that theta constant $\theta\chars{\ep}{\de}$ vanish identically for $\chars{\ep}{\de}$ odd.

Similarly to $\t=\t\chars{0}{0}$, the other even theta constants $\t\chars{\ep}{\de}$ are examples of modular forms of weight $1/2$ for the subgroup $\Gamma_g(2)$
and their zero locus in $\calA_g(2)$ is denoted by $\tn\chars\ep\de$. 

The group $\Gamma_g$ acts on characteristics, considered as elements of $(\ZZ/2)^{2g}$, via an affine action of its quotient $\Sp_{2g}(\ZZ/2)=\Gamma_g/\Gamma_g(2)$. In particular, this is to say that $\Gamma_g(2)$ is precisely equal to the subgroup of~$\Gamma_g$ that fixes each characteristic. Moreover, $\Gamma_g/\Gamma_g(2)$ acts transitively  on the subset
of even characteristics.
 We  consider  the union $\tn(2)$ of all $\tn\chars\ep\de\subset \calA_g(2)$, as $\chars\ep\de$ ranges among the $2^{g-1}(2^g+1)$ even characteristics. It is the pull-back of a divisor $\tn\subset \calA_g$. Thus $\tn$ is the image of a single
 $\tn\chars\ep\de$
via the cover $\calA_g(2)\rar\calA_g$ for any even $\chars\ep\de$.

%
%
%
Geometrically,~$\tn$ is the locus of ppav whose theta divisor has a singularity at an even two-torsion point. 




Thus we have 

\begin{prop}[Irreducible components of $\tn(2)$] \label{prop:tn-irred}
Let $g\geq 3$ and let $\chars\ep\de\in(\ZZ/2)^{2g}$ be an even characteristic. Then
\begin{itemize}
\item[(i)]
the divisor $\tn\chars\ep\de$ is irreducible, and so are its preimages
in $\calA_g(2n)$ for every $n>1$;
\item[(ii)]
the divisor $\tn(2n)$ is connected, not locally irreducible and consists of 
$2^{g-1}(2^g+1)$ irreducible components;
\item[(iii)]
the divisor $\tn\subset\calA_g$ is irreducible but not locally irreducible.
\end{itemize}
\end{prop}
\begin{proof}
(i) The divisor $\theta\chars\ep\de$ is the zero locus inside $\calA_g(2)$
of a modular form $\theta\chars\ep\de(\tau)$ of minimal weight $\frac{1}{2}$, which then 
is absolutely irreducible by Lemma \ref{lm:factorization}(i).

(ii) Since $g\geq 3$, every pair of irreducible components of $\tn(2)$
intersect. In fact, the closure of two distinct $\tn\chars{\ep}{\de}$ and $\tn\chars{\ep'}{\de'}$
inside the Satake compactification $\ol{\calA}_g(2)$ must intersect each other, because theta-nulls are ample.
Since $\partial\calA_g(2)$ inside $\ol{\calA}_g(2)$ has codimension $g\geq 3$,
it follows that  $\tn\chars{\ep}{\de}\cap\tn\chars{\ep'}{\de'}$ is not contained
inside $\partial\calA_g(2)$, and so they must intersect inside $\calA_g(2)$.
As a consequence, $\tn(2)$ is connected and not locally irreducible, and $\tn(2n)$ is  so too for all $n$.
Moreover, being each $\tn\chars\ep\de$ absolutely irreducible by (i), every irreducible component
of $\tn(2n)$ is a connected \'etale cover (in the orbifold sense) of some $\tn\chars\ep\de$.
Since there are exactly $2^{g-1}(2^g+1)$ even characteristics, $\tn(2n)$ has $2^{g-1}(2^g+1)$ irreducible components.

(iii) Since $\tn$ is the image of any even $\tn\chars\ep\de$, it follows from (i) that $\tn$ is irreducible.
Moreover, $\tn(2)\rar\tn$ is an \'etale cover (in the orbifold sense), and so $\tn$ cannot be locally irreducible by (ii).
\end{proof}

 We then have the following
  
  \begin{cor}[$\tc\ep\de$ is universally irreducible for $g\geq 3$]\label{cor:tn}
  For $g\geq 3$ and for any even characteristics $\chars\ep\de$,
  the divisor $\wti{\tn}\chars\ep\de\subset\HH_g$ is normal and irreducible.
 \end{cor}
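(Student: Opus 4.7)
The plan is to prove the two claims---irreducibility and normality---separately, deducing irreducibility from the general machinery already developed in the paper and reducing normality to a local geometric estimate.

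For \emph{irreducibility} of $\wti\tn\chars\ep\de$, the strategy is to invoke Lemma~\ref{lm:factorization}(i): the theta constant $\theta\chars\ep\de$ is a modular form of weight $1/2$ with respect to a finite-index subgroup of $\Gamma_g$, hence an absolutely prime form. Equivalently, $\tn\chars\ep\de\subset\calA_g(2)$ is absolutely irreducible. Since its codimension is $1\leq g-2$ for $g\geq 3$, Theorem~\ref{mthm:codim} (equivalently Theorem~\ref{mthm:div}(ii)) applies and gives that $\tn\chars\ep\de$ is totally irreducible; that is, $\wti\tn\chars\ep\de\subset\HH_g$ is irreducible.

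For \emph{normality}, I would first reduce to normality of $\tn\chars\ep\de$ in $\calA_g(2)$: since $\HH_g\to\calA_g(2N)\to\calA_g(2)$ is \'etale in the orbifold sense (and honestly \'etale for $N\geq 3$), normality is preserved along this composite covering. As a reduced hypersurface in the smooth orbifold $\calA_g(2)$, the divisor $\tn\chars\ep\de$ is Cohen--Macaulay and thus automatically satisfies Serre's $S_2$ condition. By Serre's criterion, what remains is $R_1$, namely $\codim(\Sing(\tn\chars\ep\de)/\tn\chars\ep\de)\geq 2$. A point $\tau\in\tn\chars\ep\de$ is singular precisely when all first partials $\partial_{\tau_{ij}}\theta\chars\ep\de(\tau)$ vanish simultaneously; via the heat equation $\partial_{\tau_{ij}}\vartheta=\tfrac{1}{2\pi i(1+\delta_{ij})}\partial^2_{z_iz_j}\vartheta$, this is equivalent to the entire Hessian $\Hess_z\tt\ep\de(\tau,0)$ being the zero matrix. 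Geometrically, such $\tau$ are exactly those for which the theta divisor of the ppav $A_\tau$ vanishes to order $\geq 4$ at the $2$-torsion point indexed by $\chars\ep\de$ (odd orders being excluded by parity of $\chars\ep\de$).

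The delicate step will be verifying that, for $g\geq 3$, the locus $\{\tau:\Hess_z\tt\ep\de(\tau,0)=0\}$ has codimension $\geq 3$ in $\calA_g(2)$---equivalently $\geq 2$ in $\tn\chars\ep\de$. I would attack this by a direct computation on a well-chosen sublocus: for instance, restrict to decomposable ppav's $A'\times E$ with $E$ an elliptic factor, where $\tt\ep\de$ factors as a product of theta functions on $A'$ and $E$, so its Hessian block-decomposes and distinct entries can be varied independently by moving $A'$ and $E$, producing at least three independent constraints on $\tn\chars\ep\de$. Alternatively one could appeal to known codimension estimates for Andreotti--Mayer-type strata parametrising ppav's whose theta divisor has a singularity of prescribed multiplicity at a $2$-torsion point. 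Once $R_1$ is in place, Serre's criterion gives normality of $\tn\chars\ep\de$ in $\calA_g(2)$, which then transfers to $\wti\tn\chars\ep\de\subset\HH_g$ along the \'etale cover.
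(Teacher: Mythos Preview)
Your proposal is correct, but the two halves are organised differently from the paper and one of them takes a genuinely different route.

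\textbf{Irreducibility.} The paper does \emph{not} invoke Theorem~\ref{mthm:codim} here; instead it first establishes normality of $\tn\chars\ep\de$ (via the $R_1$ estimate, cited from \cite{cvg}), deduces local irreducibility, and then applies Theorem~\ref{thm:connectedness} (i.e.\ Theorem~\ref{mthm:div}(i)) to conclude that $\wti\tn\chars\ep\de$ is irreducible. In the paper's logical flow this matters: Corollary~\ref{cor:tn} appears in Section~3.3, well before Theorem~\ref{thm:ag} is proven in Section~3.6, so appealing to Theorem~\ref{mthm:codim} there would be a forward reference. Your argument via absolute primality plus Theorem~\ref{mthm:codim} is perfectly valid (there is no circularity---Theorem~\ref{thm:ag} does not use Corollary~\ref{cor:tn}), and it has the aesthetic advantage of decoupling irreducibility from normality. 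The paper's route, on the other hand, keeps the corollary self-contained with earlier material and shows that normality alone already forces total irreducibility, without the heavier arithmetic-group machinery behind Theorem~\ref{thm:ag}.

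\textbf{Normality.} Here you and the paper agree on the reduction: Cohen--Macaulay is automatic for a hypersurface, so only $R_1$ needs checking, and via the heat equation the singular locus of $\tn\chars\ep\de$ is exactly the locus where the theta divisor acquires a point of multiplicity $\geq 4$ at the given even $2$-torsion point. The paper simply cites \cite{cvg} for the codimension estimate; your sketch via decomposable abelian varieties is a reasonable heuristic for producing three independent constraints at a point, but as written it does not rule out components of the singular locus of lower codimension away from the decomposable stratum, so in the end you too would need to invoke the Andreotti--Mayer-type estimates you mention as an alternative.
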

 \begin{proof}
We have seen that, since $\t\chars\ep\de$ is modular form of weight $1/2$, it is absolutely irreducible.
Moreover, $\tn\chars\ep\de$ is a divisor inside
  $\calA_g(2)$ and regular in codimension $1$ and so it is normal (see \cite{cvg}). Since normality is a local property, $\wti{\tn}\chars\ep\de$ is normal too.
 Hence $\wti{\tn}\chars\ep\de$ is irreducible by Theorem \ref{thm:connectedness}.
  \end{proof}

 
\subsection{Lifting subvarieties that contain $\calJ_g$ or $\calH_g$}\label{sec:tn}
 
We use $\calJ_g\subset\calA_g$ to denote the locus of Jacobians of smooth genus $g$ curves, and denote by $\calH_g\subset\calJ_g$  the locus of hyperelliptic Jacobians. 
We recall that $\calJ_g$ is the image of the moduli space of curves $\calM_g$
via the Torelli morphism $t_g:\calM_g\rar\calA_g$
and that $\calH_g$ is the image of the hyperelliptic locus  $\calHM_g\subset\calM_g$
 via $t_g$.
Moreover, {\it{as orbifolds}} $\calJ_g\setminus\calH_g$ and $\calH_g$ are smooth and
$\calJ_g$ has unibranched singular locus $\calH_g$
(for example, $\calJ_g(n)\setminus\calH_g(n)$ and $\calH_g(n)$ are smooth varieties
for all $n\geq 3$).

We recall the following fact.

\begin{prop}[Irreducible components of $\calH_g(2)$ \cite{tsu}]\label{prop:irr-H2}
For $g\geq 3$, the hyperelliptic locus $\calH_g(2)$ has 
$$ 2^{g^2} \prod_{k=1}^{g}( 2^{2k}-1)/(2g+2)!  = |\Sp_{2g}(\ZZ/2)|/ |\SY_{2g+2}|$$ irreducible components.
Moreover,
the preimage in $\calA_g(2n)$ of an irreducible component of $\calH_g(2)$
is irreducible.
\end{prop}

Since $\calH_g$ is irreducible, the group 
$\Gamma_g$ transitively acts on the set of irreducible components of $\calH_g(2)$.
Hence the above
Proposition \ref{prop:irr-H2} is a consequence of the following, cf.~\cite[Lemma 8.12]{mutheta2}, or \cite[Theorem 1]{ac}.

\begin{prop}[Fundamental group of the hyperelliptic locus]\label{prop:pi1-hyp}
If $\iota:\calH_g\rar\calA_g$ is the inclusion of the locus of hyperelliptic Jacobians
with $g\geq 2$, 
then the image of $\pi_1(\iota):\pi_1(\calH_g)\rar\pi_1(\calA_g)=\Gamma_g$ fits into the following exact sequence
\[
1\rar \Gamma_g(2)\rar \mathrm{Im}(\pi_1(\iota))\rar \SY_{2g+2}\rar 1.
\]
In particular, any irreducible component $H$ of $\calH_g(2n)$
satisfies $$\pi_1(H)\thra \Gamma_g(2n).$$
\end{prop}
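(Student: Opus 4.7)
The plan is to identify the orbifold fundamental group of $\calH_g$ with the hyperelliptic mapping class group and then compute the image of the symplectic representation. Via the Torelli map, $\calH_g \cong \calHM_g$ as orbifolds, and any hyperelliptic curve is a double cover of $\PP^1$ branched at $2g+2$ points, so
\[
\calHM_g \;\cong\; \calM_{0,2g+2}/\SY_{2g+2}
\]
as orbifolds (up to the extra $\ZZ/2$ stabilizer coming from the hyperelliptic involution $\sigma$). The orbifold fundamental group $\pi_1(\calHM_g)$ can therefore be identified, via Birman--Hilden, with the hyperelliptic mapping class group $\MCG_g^{\rm hyp}\subset \MCG_g$, i.e.\ the centralizer of $\sigma$, which fits into
\[
1 \to \langle \sigma \rangle \to \MCG_g^{\rm hyp} \to \SY_{2g+2}\ltimes P \to 1,
\]
where $P$ is the pure spherical mapping class group of $\PP^1$ with $2g+2$ punctures. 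The morphism $\pi_1(\iota)$ then becomes the restriction of the standard symplectic representation $\MCG_g \to \Sp(2g,\ZZ)$ to $\MCG_g^{\rm hyp}$.

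Next I would compute the image modulo $\Gamma_g(2)$. The action of $\MCG_g^{\rm hyp}$ on $H_1(\Sigma_g,\ZZ/2)$ factors through the action on the set of $2g+2$ Weierstrass points (equivalently, on the induced theta-characteristic structure on the Jacobian): concretely, the canonical embedding $\SY_{2g+2}\hookrightarrow \Sp(2g,\ZZ_2)$ sending a permutation of branch points to the induced automorphism of the symplectic $\ZZ_2$-vector space spanned by differences $[W_i]-[W_{2g+2}]$. Since $\sigma$ acts as $-I\in \Gamma_g(2)$ and $P$ maps into $\Gamma_g(2)$ (pure braids act trivially on Weierstrass indices), the composition $\MCG_g^{\rm hyp}\to \Sp(2g,\ZZ)\to \Sp(2g,\ZZ_2)$ has image exactly $\SY_{2g+2}$.

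The hard step is to show that $\Gamma_g(2)\subseteq \mathrm{Im}(\pi_1(\iota))$. For this I would argue generator by generator: $\Gamma_g(2)$ is generated by squared transvections $T_v^2$ ($v\in\ZZ^{2g}$ primitive), each such $v$ is realized by a simple closed curve $c\subset \Sigma_g$, and on a hyperelliptic surface every primitive homology class can be represented by a curve invariant under $\sigma$ (the orbit of simple closed curves under $\MCG_g^{\rm hyp}$ already exhausts all primitive homology classes up to the symplectic action of $\SY_{2g+2}$, by the classification of simple closed curves up to the hyperelliptic mapping class group; alternatively one combines Humphries-type generators, each of which is a $\sigma$-invariant Dehn twist). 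For any such $\sigma$-invariant $c$, the Dehn twist $T_c$ lies in $\MCG_g^{\rm hyp}$, hence $T_c^2=T_v^2\in\mathrm{Im}(\pi_1(\iota))$, giving the required containment. Combining the two displays yields the asserted short exact sequence.

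For the ``in particular'' claim, by Lemma~\ref{lm:lift}(ii) a connected component $U$ of $\wti{\calH}_g\subset\HH_g$ has stabilizer $H=\mathrm{Im}(\pi_1(\iota))$ and any irreducible component $\calH_g(2n)_{\irr}$ of $\calH_g(2n)$ is of the form $U/(H\cap \Gamma_g(2n))$ (up to a $\Gamma_g$-translate); since $2n$ is a multiple of $2$, we have $\Gamma_g(2n)\subseteq \Gamma_g(2)\subseteq H$, hence $H\cap \Gamma_g(2n)=\Gamma_g(2n)$, and applying Lemma~\ref{lm:lift}(iii) to the Galois cover $U\to \calH_g(2n)_{\irr}$ gives $\pi_1(\calH_g(2n)_{\irr})\thra \Gamma_g(2n)$. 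The main obstacle throughout is the $\Gamma_g(2)$-containment, which forces one to realize sufficiently many elements of $\Gamma_g(2)$ as coming from $\sigma$-equivariant topology on $\Sigma_g$.
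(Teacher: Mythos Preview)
The paper does not prove this proposition: it simply records it as a known fact, citing A'Campo \cite{ac} (Theorem~1) and Mumford \cite{mutheta2} (Lemma~8.12). Your overall architecture---identify $\pi_1(\calH_g)$ with the hyperelliptic mapping class group $\MCG_g^{\rm hyp}$ via Birman--Hilden, then compute the image of the symplectic representation in two steps (quotient $\SY_{2g+2}$, kernel $\Gamma_g(2)$)---is exactly the route those references take, and your steps 1 and 2 are fine.

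However, your step 3 has a genuine gap. The claim ``on a hyperelliptic surface every primitive homology class can be represented by a curve invariant under $\sigma$'' is \emph{false} for $g\geq 3$. The $\sigma$-invariant non-separating simple closed curves are precisely the preimages of simple arcs joining two branch points on $\PP^1$; their mod-$2$ homology classes all lie in a single $\SY_{2g+2}$-orbit (the one corresponding to $2$-element subsets of the $2g+2$ branch points under the standard identification $H_1(\Sigma_g;\ZZ_2)\cong$ even subsets modulo complement). For $g\geq 3$ this orbit is proper---e.g.\ for $g=3$ there are $28$ such classes out of $63$ nonzero vectors in $\ZZ_2^6$---so most primitive $v\in\ZZ^{2g}$ admit no $\sigma$-invariant representative, and your argument does not produce $T_v^2$ for those $v$. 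Your parenthetical fallback (``Humphries-type generators, each of which is a $\sigma$-invariant Dehn twist'') is likewise incorrect: for $g\geq 3$ the $2g+1$ Humphries generators of the \emph{full} $\MCG_g$ are never all $\sigma$-invariant, since $\MCG_g^{\rm hyp}\subsetneq\MCG_g$; the $\sigma$-invariant chain of $2g+1$ curves generates only $\MCG_g^{\rm hyp}$.

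What actually works (and is the content of A'Campo's argument) is to show directly that the group generated by the $2g+1$ transvections $T_{v_1},\dots,T_{v_{2g+1}}$ along a chain $v_i\cdot v_{i+1}=1$ contains $\Gamma_g(2)$. This is a nontrivial computation with symplectic matrices (or an induction on $g$), not a consequence of realizing each squared transvection individually. You correctly identify this as ``the main obstacle'', but the proposal does not overcome it.
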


In fact, using Proposition \ref{prop:pi1-hyp} 
and Lemma \ref{lm:lift}(iii) we can draw the following conclusion.

\begin{cor}[Irreducible components of $\wti{\calH}_g$]
The preimage in $\HH_g$ of a connected component of $\calH_g(2)$
is smooth and connected.\\
The   preimage of $\calH_g$ in $\HH_g$ consists of  
$2^{g^2} \prod_{k=1}^{g}( 2^{2k}-1)/(2g+2)!$ smooth irreducible components.
\end{cor}

 Similarly, for the Jacobian locus we have:

\begin{lm}[Irreducibility of the Jacobian locus $\wti{\calJ}_g$]\label{lemma:irr-Jacobian}
The Jacobian loci $\calJ_g(\Gamma)$ are irreducible
for all subgroups $\Gamma$ of $\Gamma_g$.
\end{lm}
\begin{proof}
It is well-known that $\calM_g$ is a connected orbifold.
Since the homomorphism
$\pi_1(\calM_g)\rar\pi_1(\calA_g)$ of orbifold fundamental
groups induced by $t_g$ is surjective (see Fact \ref{fact:rho-surjective}), 
the irreducibility of the Jacobian locus $\wti{\calJ}_g$ in $\HH_g$
follows from Lemma \ref{lm:lift}(iii). 
We conclude, since $\calJ_g(\Gamma)=\wti{\calJ}_g/\Gamma$.
%
%
\end{proof}


Here we specialize some of the results obtained in Section \ref{sec:topological} to subvarieties of $\calA_g(\Gamma)$ that contain the Jacobian
or the hyperelliptic locus.\smallskip


\begin{prop}[Subvarieties containing hyperelliptic or Jacobian locus]\label{prop:irr-Mg}
Let $Z\subset\calA_g(\Gamma)$ be an irreducible subvariety.
\begin{itemize}
\item[(i)]
If a Zariski-open subset of $\calJ(\Gamma)$ is contained in $Z$, then 
$\wti{Z}\subset\HH_g$ is connected, with finitely many irreducible components.
Moreover, if such Zariski-open subset of $\calJ(\Gamma)$
is not entirely contained inside the singular locus of $Z$,
then $\wti{Z}$ is irreducible.
\item[(ii)]
Assume that $\calA_g(\Gamma)$ dominates $\calA_g(2)$.
If a Zariski-open subset of $\calH_g(\Gamma)$ is contained in $Z$, then 
$\wti{Z}\subset\HH_g$ is connected, with finitely many irreducible components.
Moreover, if such Zariski-open subset of $\calH_g(\Gamma)$
is not entirely contained inside the singular locus of $Z$,
then $\wti{Z}$ is irreducible.
\end{itemize}
\end{prop}
\begin{proof}

(i) The inclusion $\calJ_g(\Gamma)\hra\calA_g(\Gamma)$ induces
a surjection at the level of orbifold fundemental groups
by Corollary \ref{cor:rho-surjective}.
The conclusion follows from Corollary
\ref{cor:connected-LHT} for the connectedness of $\wti{Z}$ and Corollary
\ref{cor:irred-crit} for the irreducibility of $\wti{Z}$.

(ii) Since $\Gamma$ is contained inside $\Gamma_g(2)$,
the inclusion $H\hra\calA_g(\Gamma)$ of
a connected component of $\calH_g(\Gamma)$ induces
a surjection at the level of orbifold fundemental groups
by Proposition \ref{prop:pi1-hyp}.
Again, we conclude by Corollary Corollary
\ref{cor:connected-LHT} and Corollary \ref{cor:irred-crit}.
\end{proof}

As a first example, one can apply the above Proposition 
\ref{prop:irr-Mg}(ii) again to the case of $Z=\tn\chars\ep\de$
inside $\calA_g(2)$ to deduce the irreducibility of $\wti{\tn}\chars\ep\de$. In fact in Theorem 9.1 at page 137 of \cite{mutheta2},
Mumford gives  a characterization of a component $\wti{H}$
of $\wti{\calH}_g(2)$ in terms of  vanishing of theta constants. Moreover, in \cite{sm} it is proved that these equations cut $\wti{H}$ smoothly, and so $\wti{H}$ is contained in the smooth locus of $\tn\chars\ep\de$.\\ %
%
%
%
%

As a second example, we mention the irreducible component 
of $\calN_k$ that contains the Jacobian locus. Assume $g\geq 4$.
It was proven by Andreotti-Mayer \cite{anma} that the theta divisor of a Jacobian of dimension $g$ has singular locus of dimension at least $g-4$.
If $\calN_k\subset\calA_g$ is the locus of Abelian varieties whose theta divisor has singular locus of dimension at least $k$,
then
for all $k\leq g-4$ there exists an irreducible component
$\calN^J_k$ of $\calN_k$ that contains the Jacobian locus.
Then $\wti{\calN}^J_k$ is connected with finitely many irreducible components
by Proposition \ref{prop:irr-Mg}(i).

\section{Universally irreducible subvarieties of $\calA_g(\Gamma)$}\label{sec:universal}

\subsection{Rational translates}\label{sec:translates}

We recall that the Siegel upper half-space $\HH_g$ is transitively acted on
by $\Sp_{2g}(\RR)$. 
Given a finite-index subgroup $\Gamma$ of $\Gamma_g$,
the action by a rational element $M\in\Sp_{2g}(\QQ)$ on $\HH_g$
induces a diagram
\[
\xymatrix{
& \calA_g(\Gamma^M) \ar[dl]^{p} \ar[dr]_{q_M}^{\sim} &\\
\calA_g(\Gamma) && \calA_g(\Gamma_M)
}
\] 
for suitable finite-index subgroups $\Gamma^M$ and $\Gamma_M$ that only depend on $\Gamma$ and $M$. The multiplication by $M$, that sends a subset of $\HH_g$ to its $M$-translate in $\HH_g$,
descends to $q_M\circ p^{-1}$, which sends a subset of $\calA_g(\Gamma)$ to a subset of $\calA_g(\Gamma_M)$.
In this section we are describing the properties of such construction.\\

Let $M\in\Sp_{2g}(\QQ)$. We denote by $c_M$ the conjugation by $M$
defined as $c_M(\gamma):=M\gamma M^{-1}$.
Given a finite-index subgroup $\Gamma$ of $\Gamma_g$,
the conjugate $c_M(\Gamma)$ is a subgroup of $\Sp_{2g}(\QQ)$.
We define $\Gamma_M:=\Gamma_g\cap c_M(\Gamma)$
and $\Gamma^M:=c_{M^{-1}}(\Gamma_M)$,
which are subgroups of $\Gamma_g$.

\begin{example}
If $M=\left(\begin{array}{cc}d\cdot I_g & 0 \\ 0 & d^{-1}\cdot I_g\end{array}\right)$ with $d\geq 2$ integer and $\Gamma=\Gamma_g$,
then
\[
\Gamma_M=
\left\{\left(\begin{array}{cc}A & B \\
C & D\end{array}\right)\in\Gamma_g\ \Big|\ C\equiv 0\pmod{d^2}   \right\}.
\]
\end{example}


\begin{lm}[$\Gamma_M$ and $\Gamma^M$ have finite index in $\Gamma_g$]\label{lemma:matrixM}
Let $d\geq 1$ be an integer and $M\in\Sp_{2g}(\QQ)$ such that $dM$ has integral entries.
\begin{itemize}
\item[(i)]
If $\Gamma\supseteq\Gamma_g(\ell)$, then $\Gamma_M\supseteq\Gamma_g(\ell d^2)$.
\item[(ii)]
$\Gamma_M$ has finite index in $\Gamma_g$.
\item[(iii)]
$\Gamma^M$ is a finite-index subgroup of $\Gamma$.
\end{itemize}
\end{lm}
\begin{proof}
Note first that both $d\cdot M$ and $d\cdot M^{-1}$ have integral entries.
This is immediate, since
$M=\left(\begin{array}{cc} A & B\\  C& D\end{array}\right)$ is symplectic
and so
$M^{-1}=\left(\begin{array}{cc} D^t & -B^t\\  -C^t & A^t\end{array}\right)$.

(i)
Let now $\gamma$ be any element of $\Gamma_g(\ell d^2)$. We want to show that $\gamma\in\Gamma_M$.
Since $\Gamma_g(\ell d^2)\subseteq\Gamma_g(\ell)\subseteq\Gamma$, it is enough
to show that $\gamma$ belongs to $c_M(\Gamma_g(\ell))\subseteq c_M(\Gamma)$.

Since $\gamma\in\Gamma_g(\ell d^2)$, we can write $\gamma=I+\ell d^2N$ for some integral $N$. Then
\[
M^{-1}\gamma M=M^{-1}(I+nd^2N)M=I+\ell(dM^{-1})N(dM).
\]
Hence $M^{-1}\gamma M$ belongs to $\Gamma_g(\ell)$ and so
$\gamma\in M\Gamma_g(\ell)M^{-1}$, 
as desired.

(ii)
Since $\Gamma$ has finite-index in $\Gamma_g$, it
contains a principal congruence subgroup $\Gamma(\ell)$ for some $\ell\geq 1$.
Hence, $\Gamma_M$ contains $\Gamma(\ell d^2)$ and so it has finite index
in $\Gamma_g$.

(iii) follows from (ii), since $\Gamma^M=c_{M^{-1}}(\Gamma_M)=\Gamma\cap c_{M^{-1}}(\Gamma_g)$.
\end{proof}

By Lemma \ref{lemma:matrixM}, we have the following diagram of finite-index subgroups of $\Gamma_g$
\[
\xymatrix{
& \Gamma^M \ar@{_(->}[dl] \ar[rd]^{\sim}_{c_M} &\\
\Gamma && \Gamma_M
}
\]
The isomorphism of $\HH_g$ given by the multiplication by $M$
descends to an isomorphism $q_M$ as in the following diagram
\[
\xymatrix{
& \calA_g(\Gamma^M) \ar[dl]^p \ar[dr]^\sim_{q_M} & \HH_g\ar[l] \ar[dr]_{M}^\sim \\
\calA_g(\Gamma) && \calA_g(\Gamma_M) & \HH_g \ar[l] 
}
\]

\begin{dfx}[Rational translate]
Given $Z\subset\calA_g(\Gamma)$ its {\it{$M$-translate}} inside $\calA_g(\Gamma_M)$
is $Z_M:=q_M(p^{-1}(Z))$.
\end{dfx}

Clearly, the $M$-translate of $Z$ can be pulled back to any level that
dominates $\calA_g(\Gamma_M)$.

The following statement gives a way to produce more
absolutely or universally irreducible loci, starting from known ones.

\begin{lm}[Absolute/universal irreducibility is translation invariant]\label{lemma:transl-irr}
Assume $g\geq 2$. Let $Z\subset\calA_g(\Gamma)$ and $M\in\Sp_{2g}(\QQ)$,
and let $Z_M$ be the $M$-translate of $Z$.
\begin{itemize}
\item[(i)]
$Z$ is absolutely irreducible $\iff$ $Z_M$ is absolutely irreducible.
\item[(ii)]
$Z$ is universally irreducible $\iff$ $Z_M$ is universally irreducible.
\end{itemize}
\end{lm}
\begin{proof}
Let $p$ and $q_M$ be as above.
Both claims (i-ii) are straightforward, since
$(\calA_g(\Gamma_M), Z_M)$ is isomorphic to $(\calA_g(\Gamma^M),\,p^{-1}(Z))$ via $q_M$
and $p$ is a finite \'etale cover.
\end{proof}
%
%
%
%
%
%
%

As an example of application of Lemma \ref{lemma:transl-irr}, for $g\geq 3$
all rational translates of 
$\calJ_g(n)$
and of $\tn\chars\ep\de$ (for $n$ even)
are universally irreducible
by Corollary \ref{cor:tn} and by Lemma \ref{lemma:irr-Jacobian}.


\subsection{From absolutely irreducible to universally irreducible}
 
Let $\Gamma$ be a finite-index subgroup of $\Gamma_g$.
The goal of this section is to prove the following.

\begin{thm}[Absolutely irreducible of small codimension are universally irreducible]\label{thm:ag}
Let $g\geq 3$
and let $Z\subset\calA_g(\Gamma)$ be a \CORRECT{complete intersection subvariety} of codimension at most $g-2$. Then its preimage $\wti{Z}$ in $\HH_g$ has finitely many irreducible components.
In particular, if $Z$ is
absolutely irreducible, then it is universally irreducible.
\end{thm}

The proof of Theorem \ref{thm:ag}
relies on the following main ingredients:
\begin{itemize}
\item 
absolute and universal irreducibility are invariant under translation by elements of the rational  symplectic  group
(Lemma \ref{lemma:transl-irr})
\item
each \CORRECT{complete intersection subvariety} $Z\subset\calA_g(\Gamma)$ of codimension at most $g-2$ has a rational translate $Z_M$ that meets the Jacobian locus in a non-hyperelliptic Jacobian (Lemma \ref{lemma:degenerations})
\item
the image of $\pi_1((Z_M)_{\sm})\rar\Gamma_g$
contains a pair of transvections along 
two linearly independent vectors of $\ZZ^{2g}$,
and so does the image of $\pi_1(Z)\rar\Gamma_g$
(Corollary \ref{cor:ample-transvections})
\item
Zariski-dense subgroups of $\Gamma_g$ that contain commuting transvections
associated to a pair of linearly independent vectors of $\ZZ^{2g}$
have finite index (Proposition \ref{prop:commuting}).
\end{itemize}

We begin by analyzing how subvarieties of small codimension
in $\calA_g$ meet the boundary of the Jacobian locus: this is due to technical
reasons, as it is easier to control the monodromy at infinity
of a subvariety of the moduli space of curves $\calM_g$ 
(as shown in Appendix \ref{sec:dehn})
rather
than of a subvariety of $\calA_g$.

\begin{lm}\label{lemma:degenerations}
Let $g\geq 3$ and $Z\subset\calA_g(\Gamma)$ be a \CORRECT{complete intersection subvariety}
of codimension  $k\leq g-2$ and denote by $p':\calA_g(\Gamma_M)\rar\calA_g$
the natural projection. Then there exists a rational translate
$Z_M\subset\calA_g(\Gamma_M)$ of $Z$ 
such that
\begin{itemize}
\item[(i)] 
the image $p'(Z_M)\subset\calA_g$
meets the non-hyperelliptic
Jacobian locus $\calJ_g\setminus\calH_g$;
\item[(ii)]
if $\calA_{g-h}$ is the largest boundary stratum of $\ol{\calA}_g$
that meets $\ol{p'(Z_M)}\cap\ol{\calJ}_g$, then
$\ol{p'(Z_M)}\cap\ol{\calJ}_g\cap \calA_{g-h}$ is non-compact.
%
\end{itemize}
\end{lm}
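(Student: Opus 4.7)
The strategy combines a dimension count with the density of $\Sp_{2g}(\QQ)$-translates, applied both to the open Jacobian stratum (for (i)) and to the boundary strata of the Satake compactification $\ol{\calA}_g$ (for (ii)). Throughout, I identify $Z'\subset\calA'_g$ with its image $Z\subset\calA_g$ (of the same codimension $c:=\codim(Z,\calA_g)\leq g-2$).

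\emph{Step 1 (Part (i)).} Compute $\dim Z = \binom{g+1}{2}-c$, $\dim\calJ_g=3g-3$, $\dim\calH_g=2g-1$. The expected dimension of $Z\cap\calJ_g$ equals $3g-3-c\geq 2g-1 = \dim\calH_g$, with strict inequality whenever $c<g-2$. Since $\HH_g$ is homogeneous under $\Sp_{2g}(\RR)$, the standard moving lemma shows that the set of $M\in\Sp_{2g}(\RR)$ for which $M\cdot Z$ meets $\calJ_g\setminus\calH_g$ in the expected dimension is Zariski-dense open. (In the borderline case $c=g-2$ one must also exclude the algebraic condition that $M\cdot Z\cap\calJ_g$ is a union of irreducible components of $\calH_g$, which again fails on a Zariski-dense open.) Density of $\Sp_{2g}(\QQ)\subset\Sp_{2g}(\RR)$ then yields a rational $M$.

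\emph{Step 2 (Part (ii)).} Decompose $\ol{\calA}_g=\bigsqcup_{h=0}^{g}\calA_{g-h}$ and note that $\ol{\calJ}_g\cap\calA_{g-h}\supseteq\calJ_{g-h}$, of dimension $3(g-h)-3$ for $g-h\geq 3$ (and equal to $\calA_{g-h}$ otherwise). Set $W:=\ol{M\cdot Z}\cap\ol{\calJ}_g$. The expected dimension of $W\cap\calA_{g-h}$ is $2g-1-3h$ (with the obvious modification for small $g-h$). Let $h^*\geq 1$ be the smallest index with $W\cap\calA_{g-h^*}\neq\emptyset$. Refining the genericity of $M$ (still rational, by density), arrange that $W$ also meets every lower stratum $\calA_{g-h^*-k}$ whose expected dimension $2g-1-3(h^*+k)$ is non-negative. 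The resulting limit points lie in $\overline{W\cap\calA_{g-h^*}}\setminus(W\cap\calA_{g-h^*})\subset\ol{\calA}_{g-h^*}\setminus\calA_{g-h^*}$, hence $W\cap\calA_{g-h^*}$ is non-compact.

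\textbf{Main obstacle.} The delicate point is the marginal regime where $h^*$ is already so large that $2g-1-3(h^*+1)<0$ and a dimension count alone no longer forces extension to a deeper stratum. In this regime one must invoke additional intersection-theoretic structure, using that $\mathrm{Pic}(\ol{\calA}_g)\otimes\QQ$ is generated by the ample class $\lambda$: every nonzero class of appropriate codimension intersects the fundamental class of each boundary stratum positively, forcing set-theoretic intersections where the dimensional margin is tight, and thereby ruling out the ``rigidly compact'' scenario for generic rational $M$.
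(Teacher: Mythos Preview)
Your Step~1 is essentially the paper's argument for (i): the open locus $\calJ_g\setminus\calH_g$ has dimension $3g-3>g-2\geq\codim(Z)$, so a density/genericity argument produces a rational $M$ with $M\cdot Z$ meeting it. Fine.

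Step~2, however, has a genuine gap. You try to force $W=\ol{M\cdot Z}\cap\ol{\calJ}_g$ to meet successively deeper Satake strata by ``refining the genericity of $M$'', but you give no argument that such an $M$ exists. The Satake boundary $\partial\calA_g$ has codimension $g$, so a naive dimension count gives nothing once you are one stratum down; and your patch via $\mathrm{Pic}(\ol{\calA}_g)\otimes\QQ=\QQ\lambda$ only constrains divisor classes, not classes of higher codimension, so it cannot force set-theoretic intersections for $\codim(Z)>1$. Even granting that $W$ meets $\calA_{g-h^*-1}$, your conclusion that $W\cap\calA_{g-h^*}$ is non-compact does not follow: $W$ is closed but may well have a compact component in $\calA_{g-h^*}$ and a separate component entirely contained in a deeper stratum.

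The paper avoids all of this by \emph{pulling back to $\ol{\calM}_g$} via the Torelli map. Once $M$ is chosen as in (i), set $Y:=t^{-1}(M\cdot Z)\subset\calM_g$, which has codimension $\leq g-2$. The key non-trivial input is the theorem of Faber--Pandharipande that any subvariety of $\calM_g^{\mathrm{ct}}$ of codimension $<g$ is non-compact; hence $\ol{Y}$ must meet the irreducible boundary $\ol{\delta}^{\mathrm{irr}}_1$. One then takes the minimal $h$ with $\ol{Y}\cap\delta^{\mathrm{irr}}_h\neq\emptyset$, uses the fibration $\delta^{\mathrm{irr}}_h\to\calM^{\mathrm{ct}}_{g-h}$ (with $2h$-dimensional fibers) to get a subvariety of $\calM^{\mathrm{ct}}_{g-h}$ of codimension $\leq g-h-1<g-h$, and applies Faber--Pandharipande \emph{again} in genus $g-h$ to conclude non-compactness. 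The point is that in $\ol{\calM}_g$ the boundary is divisorial, so each step drops codimension by exactly one, and the Faber--Pandharipande bound is precisely what propagates non-compactness---this is the ingredient your argument is missing.
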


We denote by $\delta^{\IRR}_h$ the locally-closed
locus inside the Deligne-Mumford compactification
$\ol{\calM}_g$ that parametrizes curves
whose normalization has genus $g-h$
(the genus of a smooth disconnected curve being the sum
of the genera of its connected components).

\begin{proof}[Proof of Lemma \ref{lemma:degenerations}] 
\CORRECT{
(i) 
 Let $\wti{Z}$ be the preimage of $Z$ in $\HH_g$. Since $\Sp_{2g}(\RR)$ transitively acts on $\calH_g$, 
 there exists
$M\in\Sp_{2g}(\RR)$ such that $M\cdot\wti{Z}$ does not contain any component of  $\wti{\calH}_g$.
By density we can assume $M$ to be rational.
}

\CORRECT{
Since $Z$ is a complete intersection, so is $Z_M$ and their rational classes are positive multiples of  $\lambda^k$ . Since the pull-back $t_g^*(\lambda^k)$ to $\calM_g$ is not rationally trivial 
(see \cite{faber}), we have that $p'(Z_M)\cap\calJ_g$ is not empty
and its irreducible components have codimension at most $k$ inside $\calJ_g$.
Moreover $p'(Z_M)\cap\calJ_g$ does not entirely contain $\calH_g$. Since every component of $p'(Z_M)\cap\calJ_g$ has dimension at least
$2g-1=\mathrm{dim}(\calH_g)$, it follows that
$p'(Z_M)$ must contain the Jacobian of a smooth non-hyperelliptic curve.
}
%

(ii) Let $Y$ be a component of $t_g^{-1}(p'(Z_M))\subset\calM_g$, which thus
has codimension at most $g-2$ in $\calM_g$.
%
Let $\calM^{cpt}_g=\delta^{\IRR}_0$ be the locus inside $\ol{\calM}_g$ consisting of curves of compact type, namely without non-disconnecting nodes, and
recall that holomorphic subvarieties of $\calM^{cpt}_g$ of codimension less than $g$
are non-compact \cite{faber-pandharipande}. 
%
%
Thus $\ol{Y}$ must meet the locus $\ol{\delta}^{\IRR}_1$ 
of curves with at least one non-disconnecting node 
and $\ol{Y}\cap\ol{\delta}^{\IRR}_1$ has codimension at most $g-2$ inside $\ol{\delta}^{\IRR}_1$.
Up to choosing a different component of $t_g^{-1}(p'(Z_M))$ as $Y$,
we can assume that $t_g(\ol{Y})$ meets $\calA_{g-h}$
and so
the smallest number of non-disconnecting nodes in a curve
parametrized by $\ol{Y}$ is exactly $h\in[1,g-1]$.

Since $\delta^{\IRR}_h$ has codimension $h-1$ in $\ol{\delta}^{\IRR}_1$,
the intersection $\ol{Y}\cap\delta^{\IRR}_h$ has codimension at most $(g-2)-(h-1)=g-1-h$ inside $\delta^{\IRR}_h$.
Consider now the following commutative diagram
\[
\xymatrix{
\calM^{cpt}_{g-h,2h}\ar[rr]^{\nu} \ar[d]^f && \delta_h^{\IRR}\ar[d]_{\ol{t}_g} \\
\calM^{cpt}_{g-h}\ar[rr]^{t_{g-h}} && \calA_{g-h}
}
\]
in which $f$ sends the $2h$-marked curve $(C,x_1,\dots,x_{2h})$ to $C$
and $\nu$ sends $(C,x_1,\dots,x_{2h})$ to the curve obtained from $C$
by gluing each couple $(x_1,x_2)$, $(x_3,x_4)$, \dots, $(x_{2h-1},x_{2h})$ to a node.

Since $\nu$ is finite surjective and $f$ is a surjective fibration with
$2h$-dimensional fiber, $f(\nu^{-1}(\ol{Y}))$ is non-empty of codimension
at most $g-h-1$ inside $\calM_{g-h}^{cpt}$, and so is non-compact.
Hence $\ol{t}_g(\ol{Y}\cap\delta_h^{\IRR})$ is non-compact inside
$\calA_{g-h}$. The conclusion follows, since $\ol{t}_g(\ol{Y}\cap\delta_h^{\IRR})$
is closed and it is contained inside $\ol{p'(Z_M)}\cap\ol{\calJ}_g\cap\calA_{g-h}$.
\end{proof}

The above lemma and the work done in Appendix \ref{sec:dehn} yield the following.

\begin{cor}\label{cor:ample-transvections}
Let $g\geq 3$ and $Z\subset\calA_g(\Gamma)$ be an absolutely
irreducible \CORRECT{complete intersection subvariety}
of codimension at most $g-2$.
Then the image of $\pi_1(Z_{\sm})\rar\Gamma_g$ contains
two commuting transvections associated to a pair of independent vectors.
\end{cor}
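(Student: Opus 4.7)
The plan is to locate commuting integer transvections in the image of $\pi_1$ of a Torelli preimage inside $\calM_g$, and then transport them to the image of $\pi_1(Z'_{\sm})\to\Gamma_g$ by combining rational translation with a power-and-conjugation trick.

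First I would invoke Lemma \ref{lemma:degenerations} to produce some $M\in\Sp_{2g}(\QQ)$ and a rational translate $M\cdot Z''\subset\calA''_g$ of $Z'$ whose image $M\cdot Z\subset\calA_g$ meets $\calJ_g\setminus\calH_g$ and satisfies the non-compactness condition at infinity, namely that $\ol{M\cdot Z}\cap\ol{\calJ}_g\cap\calA_{g-h}$ is non-compact for the maximal such $h$. Taking an irreducible component $\ol{Y}$ of $\ol{t^{-1}(M\cdot Z)}\subset\ol{\calM}_g$ that dominates this boundary intersection, the fibration description of $\delta^{irr}_h$ by opening up the $h$ non-disconnecting nodes promotes this to the statement that $\ol{Y}\cap\delta^{irr}_h$ is non-compact. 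Proposition \ref{prop:two-twists} then furnishes two commuting transvections $T_{v_1},T_{v_2}$, with $v_1,v_2\in\ZZ^{2g}$ linearly independent, in the image of $\pi_1(Y_{\sm})\to\Gamma_g$.

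Next I would push these transvections into the image of $\pi_1(Z'_{\sm})\to\Gamma_g$. Fix an integer $k\geq 1$ large enough that $\Gamma_g(k^2)$ is contained in a common finite-index subgroup $\Gamma'''\subseteq\Gamma'\cap\Gamma'(M)\cap\Gamma'(M^{-1})$. The identity $T_v^{k^2}=T_{kv}$ keeps us inside the class of transvections and, combined with $T_{kv}\equiv I\pmod{k^2}$, places $T_{kv_1},T_{kv_2}$ inside $\Gamma_g(k^2)\subseteq\Gamma'''$. Hence these transvections lie in the image of $\pi_1$ of the $\Gamma'''$-pullback of $Y$, and via the Torelli map they sit in the image of $\pi_1((M\cdot Z''')_{\sm})\to\Gamma_g$. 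By Corollary \ref{cor:translate}, multiplication by $M$ identifies $Z'''$ with $M\cdot Z'''$, and this identification induces conjugation by $M$ at the level of images in $\Gamma_g$. Consequently $M^{-1}T_{kv_i}M$ belongs to the image of $\pi_1(Z'''_{\sm})\to\Gamma_g$, hence also to the image of $\pi_1(Z'_{\sm})\to\Gamma_g$.

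To finish, a direct computation using that $M$ is symplectic yields $M^{-1}T_{kv_i}M=T_{kM^{-1}v_i}$ as an element of $\Sp_{2g}(\QQ)$. Since this matrix lies in $\Sp_{2g}(\ZZ)$, the vector $kM^{-1}v_i$ is forced to be integral: a rational transvection $T_u$ preserves $\ZZ^{2g}$ only when $u\in\ZZ^{2g}$, as one sees by testing on the symplectic basis and using primitivity to force the denominator of $u$ to be $1$. Setting $w_i:=kM^{-1}v_i\in\ZZ^{2g}$, the vectors $w_1,w_2$ are linearly independent (as $M^{-1}$ is invertible), and $T_{w_1},T_{w_2}$ are commuting integer transvections contained in the image of $\pi_1(Z'_{\sm})\to\Gamma_g$.

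The main technical subtlety is the simultaneous reconciliation of two tensions: the transvections produced by Proposition \ref{prop:two-twists} naturally live in $\Gamma_g$ but not in the congruence subgroup needed for $Z'$, while conjugation by a matrix $M\in\Sp_{2g}(\QQ)$ would generally carry a transvection along an integer vector to one along a merely rational vector. Both issues are resolved at once by passing to the $k^2$-th power; since $T_v^{k^2}=T_{kv}$ is still a transvection, a single integer $k$ can be chosen to inject it into the required deep-level subgroup and to clear the denominators that arise from $M^{-1}$.
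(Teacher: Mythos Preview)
Your argument has a genuine gap at the sentence ``via the Torelli map they sit in the image of $\pi_1((M\cdot Z''')_{\sm})\to\Gamma_g$.'' The Torelli map certainly sends $Y'''_{\sm}$ into $M\cdot Z'''$, but there is no reason its image lands in the \emph{smooth} locus $(M\cdot Z''')_{\sm}$. Indeed, for $g\geq 6$ the codimension hypothesis forces $\dim(M\cdot Z)>3g-3=\dim\calJ_g$, so $t(Y)$ is a proper subvariety of $M\cdot Z$ and could perfectly well sit inside its singular locus. Thus the inclusion of images of fundamental groups that you assert simply does not follow. The paper addresses exactly this point through Corollary~\ref{cor2:two-twists}: the quasi-finite map $Y\to M\cdot Z$ gives, via Corollary~\ref{cor:image-smooth}, that the image of $\pi_1(Y_{\sm})$ and the image of $\pi_1((M\cdot Z)_{\sm})$ inside $\pi_1(M\cdot Z)$ are commensurable (their intersection has finite index in each), and then Lemma~\ref{lemma:commensurable} transfers the transvections by passing to suitable powers. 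You could repair your argument by inserting this commensurability step, but as written it fails.

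A secondary point: you never invoke the hypothesis that $Z'$ is absolutely irreducible, yet it is needed. The translate $M\cdot Z'''$ is isomorphic (via Corollary~\ref{cor:translate}) to $Z'''$, which is a finite \'etale pullback of $Z'$; without absolute irreducibility, $Z'''$ and hence $M\cdot Z'''$ may be reducible, and then speaking of $\pi_1((M\cdot Z''')_{\sm})$ is ambiguous. The paper uses Proposition~\ref{prop:transl-irr}(i) to guarantee irreducibility of $M\cdot Z''$ before applying Corollary~\ref{cor2:two-twists}. Your explicit handling of the conjugation-by-$M$ step and the integrality of $kM^{-1}v_i$ is correct and a bit more detailed than the paper's terse ``transvections have infinite order'' remark, but this does not compensate for the missing link between $\pi_1(Y_{\sm})$ and $\pi_1((M\cdot Z)_{\sm})$.
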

\begin{proof}
Let $Z_M\subset\calA_g(\Gamma_M)$ be a rational translate of $Z$
as in Lemma \ref{lemma:degenerations}.
By Lemma \ref{lemma:transl-irr}(i) such $Z_M$ is absolutely irreducible.
By Corollary \ref{cor2:two-twists} applied to the subvariety
$Z_M$ of $\calA_g(\Gamma_M)$,
the image of $\pi_1((Z_{\sm})_M)\rar\Gamma_g$ contains two 
commuting transvections $T_u,T_v$ associated to linearly independent vectors $u,v$.
The isomorphism $Z^M\cong Z_M$ constructed in Section \ref{sec:translates}
shows that the image of $\pi_1(Z^M_{\sm})\rar\Gamma_g$
is obtained from the image of $\pi_1((Z_{\sm})_M)\rar\Gamma_g$
by applying $c_{M^{-1}}$. As a consequence, 
the image of $\pi_1(Z^M_{\sm})\rar\Gamma_g$
contains the commuting transvections
$c_{M^{-1}}(T_u)=T_{M^{-1}u}$ and $c_{M^{-1}}(T_v)=T_{M^{-1}v}$,
associated to the linearly independent vectors $M^{-1}u$ and $M^{-1}v$.
The conclusion
follows, since $Z^M$ covers $Z$ and so the image of $\pi_1(Z_{\sm})\rar\Gamma_g$
contains the image of $\pi_1(Z^M_{\sm})\rar\Gamma_g$.
\end{proof}

Using the previous steps and the arithmeticity criterion of
Section \ref{app:arithm}, we can now prove our main result.

\begin{proof}[Proof of Theorem \ref{thm:ag}]
By Corollary \ref{cor:ample-transvections}
the image of $\pi_1(Z_{\sm})\rar\Gamma_g$ 
contains two linearly independent, commuting transvections.
Since $Z$ is absolutely irreducible, the image of $\pi_1(Z_{\sm})\rar\Gamma_g$ is Zariski-dense
in $\Gamma_g$ by Lemma \ref{lemma:dense}.
As a consequence, 
the image of $\pi_1(Z_{\sm})\rar\Gamma_g$ has finite-index in $\Gamma_g$ by Proposition \ref{prop:commuting}.
The conclusion follows from Corollary \ref{cor:total}.
\end{proof}

The same argument also shows the following.

\begin{prop}\label{prop:genus2}
Assume $g=2$. Let $D\subset\calA_2(\Gamma)$ be an absolutely irreducible, ample
divisor, whose image $D$ in $\ol{\calA}_2$ satisfies $\ol{D}\supset\pa\calA_2$.
Then $D$ is universally irreducible.
\end{prop}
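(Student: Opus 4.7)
The strategy parallels the proof of Theorem~\ref{thm:ag}: we aim to show that the image $\ol{H}$ of $\pi_1(D'_{\sm})\to\Gamma_2$ is both Zariski-dense in $\Sp(4,\CC)$ and contains two commuting transvections associated to a pair of linearly independent vectors of $\ZZ^4$. By Proposition~\ref{prop:commuting} this forces $\ol{H}$ to have finite index in $\Gamma_2$, whence $D'$ is totally irreducible by Lemma~\ref{lemma:total}. Zariski-density is immediate from absolute irreducibility via Lemma~\ref{lemma:dense}, so the task reduces to producing the transvections.

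The key point is that in genus $2$ no rational translate is needed. Since $\calJ_2\cup(\calA_1\times\calA_1)=\calA_2$ with $\calJ_2$ Zariski-dense open, and since the Torelli map $t:\calM_2\to\calJ_2$ is an isomorphism of orbifolds, we set $Y:=t^{-1}(D\cap\calJ_2)\subset\calM_2$; ampleness of $D$ guarantees $D\cap\calJ_2\neq\emptyset$, so $Y$ is non-empty and irreducible of codimension one. The hypothesis $\ol{D}\supset\pa\calA_2$ plays here the role filled by the codimension bound in Theorem~\ref{thm:ag}: it forces $\ol{Y}\subset\ol{\calM}_2$ to meet the boundary stratum $\delta^{\irr}_1$ in a non-compact subset. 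Indeed, the extension of Torelli to the Satake compactification sends $\delta^{\irr}_1$ surjectively onto $\calA_1\cong\AA^1$ (an irreducible one-nodal curve of arithmetic genus $2$ is mapped to the Jacobian of its genus-one normalization), so the image of $\ol{Y}\cap\delta^{\irr}_1$ in $\calA_1$ contains $\ol{D}\cap\calA_1=\calA_1$, which is not compact.

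Having arranged non-compact intersection with $\delta^{\irr}_1$, Proposition~\ref{prop:two-twists} provides two commuting transvections attached to linearly independent vectors of $\ZZ^4$ in the image of $\pi_1(Y_{\sm})\to\Gamma_2$. Transferring via the isomorphism $Y\cong D\cap\calJ_2$, the open inclusion $D\cap\calJ_2\hookrightarrow D$, and the finite \'etale cover $D'_{\sm}\to D_{\sm}$---exactly as in the proof of Corollary~\ref{cor:ample-transvections}---places these transvections (possibly replaced by a common power) in $\ol{H}$; the arithmeticity argument of Theorem~\ref{thm:ag} then closes the proof.

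The main obstacle I anticipate is verifying cleanly that the extended Torelli map makes $\ol{Y}\cap\delta^{\irr}_1$ surject onto $\calA_1$---this requires some care matching the Deligne--Mumford and Satake boundary structures---and ensuring that in the descent from $\pi_1(Y_{\sm})$ to $\pi_1(D'_{\sm})$ the commuting-transvection condition survives (up to replacing the transvections by a common power, which is already what Proposition~\ref{prop:commuting} tolerates). Ampleness of $D'$ is used only to force $D\cap\calJ_2\neq\emptyset$ and to make $Y$ a genuine codimension-one subvariety; the essential geometric input is the boundary hypothesis.
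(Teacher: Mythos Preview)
Your proposal is correct and follows essentially the same route as the paper, which simply says ``the same argument also shows'' Proposition~\ref{prop:genus2} after proving Theorem~\ref{thm:ag}. You have correctly identified that the hypothesis $\ol{D}\supset\pa\calA_2$ replaces the codimension bound of Lemma~\ref{lemma:degenerations} (since $\ol{\calJ}_2=\ol{\calA}_2$, one gets $\ol{D}\cap\ol{\calJ}_2\cap\calA_1=\calA_1$ non-compact without needing any rational translate), after which Corollaries~\ref{cor:two-twists}--\ref{cor2:two-twists}, Lemma~\ref{lemma:dense}, Proposition~\ref{prop:commuting}, and Lemma~\ref{lemma:total} finish exactly as in the proof of Theorem~\ref{thm:ag}.
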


Note that $\tn\chars\ep\de\subset\calA_2(2)$ is not absolutely irreducible,
as discussed in Section \ref{sec:tn2}.\\

We conclude the present section with a few questions.
\begin{itemize}
\item[(1)]
Do there indeed exist absolutely irreducible divisors in $\calA_2(\Gamma)$ with ample closure
(and that contain an irreducible component of $\pa\calA_2(\Gamma)$)?
\item[(2)]
In Proposition \ref{prop:genus2}, is the hypothesis $\ol{D}\supset\pa\calA_2$ necessary?
\item[(3)]
In Theorem \ref{thm:ag}, is the hypothesis $\mathrm{codim}_{\calA_g(\Gamma)}(Z)\leq g-2$ necessary?
\item[(4)]
Under what conditions on $X$, absolutely irreducible subvarieties of $X$
are always universally irreducible? 
\end{itemize}

An interesting case in
the above question (4) is $X=\calM_g$ with $g\geq 2$,
whose universal cover (in the orbifold sense) is 
the Teichm\''uller space, which is isomorphic to
a contractible bounded domain in $\CC^{3g-3}$.

  \section{Examples}\label{sec:examples}
   In this  last section we treat three special examples. 

\subsection{Theta-nulls in genus two}\label{sec:tn2}

Compared to what happens for genus at least $3$, the situation for $\tn\chars\ep\de$ in genus $2$ is completely different.

 \begin{prop}[Theta-nulls are not absolutely irreducible for $g=2$]\label{prop:th2}
  The divisor $\tn\chars\ep\de$ in $\calA_2(2)$ is
  not absolutely irreducible, and $\wti{\tn}\chars\ep\de$ has infinitely many smooth connected components, each isomorphic to $\HH_1\times\HH_1$.
  \end{prop}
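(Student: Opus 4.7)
The plan is to build on the description of $\pi_1(\tn)$ already given in the discussion and deduce both assertions by straightforward application of the covering-space criteria developed in Section \ref{sec:topological}, specifically Lemma \ref{lm:lift}(ii) and Lemma \ref{lm:finite-lift}. All the geometric content is concentrated in the identification of the image of $\pi_1(\tn\chars\ep\de)$ in $\Gamma_2$; once this image is pinned down, the two claims reduce to two elementary index computations.

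First I would record the geometric identification $\tn = \calA_2^{\mathrm{dec}}$, i.e.\ a principally polarized abelian surface has an even vanishing theta-null if and only if it is a product of elliptic curves. From the locally closed embedding $(\calA_1\times\calA_1)/\SY_2\hookrightarrow\calA_2$ whose image is $\tn$, one reads off the orbifold fundamental group $\pi_1(\tn)\cong (\SL(2,\ZZ)\times\SL(2,\ZZ))\rtimes\SY_2$ and identifies its image $H\subset\Gamma_2$ with the block-(anti)diagonal subgroup written out in the excerpt. Next, I would check that when we pass to the component $\tn\chars\ep\de\subset\calA_2(2)$, the $\SY_2$ factor drops out: indeed, for any even characteristic $\chars\ep\de$, the swap $\sigma$ (viewed modulo $\Gamma_2(2)$) does not fix $\chars\ep\de$, so the image $H'$ of $\pi_1(\tn\chars\ep\de)\to\Gamma_2(2)$ coincides with $H\cap\Gamma_2(2) = \Gamma_1(2)\times\Gamma_1(2)$.

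The two assertions now follow formally. Since $\SL(2,\RR)\times\SL(2,\RR)$ is a proper Zariski-closed subgroup of $\Sp(4,\RR)$, the lattice $\Gamma_1(2)\times\Gamma_1(2)$ has infinite index in $\Gamma_2(2)$; by Lemma \ref{lm:lift}(ii) applied to the universal cover $\HH_2\to\calA_2(2)$ the preimage $\wti{\tn}\chars\ep\de$ has $[\Gamma_2(2):H']=\infty$ connected components. Each of them is smooth since $\tn\chars\ep\de$ itself is smooth (it is the reduced zero locus of a modular form of minimal weight $1/2$, and smoothness is a local property). For the non-absolute-primality, I would use Lemma \ref{lm:finite-lift} applied to the finite \'etale covers $\calA_2(2n)\to\calA_2(2)$: the preimage of $\tn\chars\ep\de$ has $[\Gamma_2(2):\langle H',\Gamma_2(2n)\rangle]$ components. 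Reducing modulo $2n$ identifies this index with $[\Sp(4,\ZZ_{2n})^{(2)}:(\SL(2,\ZZ_{2n})\times\SL(2,\ZZ_{2n}))^{(2)}]$ (where $(2)$ denotes the image of $\Gamma_2(2)$), and as noted in the excerpt this grows with $n$. For $n$ large enough the index is at least $2$, producing a genuine splitting of the preimage and proving that $\tn\chars\ep\de$ is not absolutely prime.

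The only mildly delicate step is the second one: verifying that the image of $\pi_1(\tn\chars\ep\de)\to\Gamma_2(2)$ is exactly $\Gamma_1(2)\times\Gamma_1(2)$, with no residual $\SY_2$-contribution. This amounts to checking that the stabilizer in $\Gamma_2/\Gamma_2(2)=\Sp(4,\ZZ_2)$ of an even characteristic $\chars\ep\de$ does not intersect the block-swap coset of $H$, i.e.\ that no block-anti-diagonal element of $H$ lies in $\Gamma_2(2)$; this is a short explicit check in $\Sp(4,\ZZ_2)$. Everything else is bookkeeping.
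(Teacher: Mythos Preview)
Your argument follows the paper's line exactly: both rely on the identification of $\pi_1(\tn)$ with the block subgroup $H\cong(\Gamma_1\times\Gamma_1)\rtimes\SY_2$ already established in the discussion, and then invoke Lemma~\ref{lm:finite-lift} (together with the observation that $[\Sp(4,\ZZ_{2n}):\SL(2,\ZZ_{2n})^2]$ grows with $n$) for the failure of absolute primality, and Lemma~\ref{lm:lift} for the infinitude of components in $\HH_2$.

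Two small points are worth cleaning up. First, your claim that ``the swap $\sigma$ does not fix $\chars\ep\de$ for any even characteristic'' is both false (the four symmetric characteristics $\chars{00}{00}$, $\chars{00}{11}$, $\chars{11}{00}$, $\chars{11}{11}$ are $\sigma$-fixed) and not the relevant issue; the correct statement --- which you do give in your last paragraph --- is that no element of the swap-coset of $H$ lies in $\Gamma_2(2)$, and this is immediate since such elements have vanishing diagonal modulo~$2$. Second, ``minimal weight $1/2$'' does not by itself imply smoothness of the zero locus; here smoothness comes from the product description $\tn\cong(\calA_1\times\calA_1)/\SY_2$, as the discussion notes. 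Neither point affects the conclusion --- indeed, even if the $\SY_2$ factor survived in $H\cap\Gamma_2(2)$, that group would still have infinite index in $\Gamma_2(2)$, so the precise determination of $H'$ is not actually needed.
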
 

We remark that the hypotheses of Theorem \ref{thm:divisor}
are not satisfied. 
Indeed, the divisor $\tn\subset \calA_2$ is irreducible
and it coincides with the locus $\calA_2^{\mathrm{dec}}$ of decomposable Abelian varieties. Hence, its closure contains $\pa\calA_2$, and so $\pa\tn$ has codimension $1$ inside $\ol{\tn}$. As a consequence,
the same happens for each of the $10$ smooth irreducible components $\tn\chars\ep\de$ of $\tn(2)$ inside $\calA_2(2)$.

\begin{proof}[Proof of Proposition \ref{prop:th2}]
Note that $\pi_1(\tn)$ can be identified to the 
   subgroup $(\Gamma_1\times\Gamma_1)\ltimes \mathfrak{S}_2$
   of $\Sp_4(\ZZ)$, namely
 \[
 \pi_1(\tn)\cong
\left\{
\left(\begin{array}{cc}A & 0\\ 0 & D\end{array}\right),
\ \left(\begin{array}{cc}A\cdot\sigma & 0\\ 0 & D\cdot\sigma\end{array}\right)
\ \Big|\ A,D\in\mathrm{SL}_2(\ZZ)\right\} 
 \]
 where $\sigma=\left(\begin{array}{cc}0 & 1\\ 1 & 0\end{array}\right)
 \in\mathrm{GL}_2(\ZZ)$.
 In view of Lemma \ref{lm:finite-lift}(i), the number of connected components
of the preimage of $\tn\chars\ep\de$ inside $\calA_g(2n)$
is $[\Sp_4(\ZZ/{2n}):\mathrm{SL}_2(\ZZ/{2n})\times\mathrm{SL}_2(\ZZ/{2n})]$
for $n>1$, which increases with $n$.
Hence $\tc\ep\de$ is not absolutely irreducible. 

The last claim is easy, since the reducible locus in $\HH_2$
is obtained acting by $\Sp_4(\ZZ)$ on the locus of diagonal matrices, which can be naturally identified to $\HH_1\times\HH_1$.
 \end{proof}
 


%

Proposition \ref{prop:th2} implies that, for $g=2$,
a factorization
of $\tc\ep\de$ into absolutely irreducible forms does not exist (note
that Lemma \ref{lm:factorization} does not apply).

  \subsection{Intermediate Jacobians of cubic threefolds}
  
 We recall that for an odd characteristic
 $\chars\ep\de$ we can define  $\gn\chars\ep\de\subset\calA_g(2)$ as the locus of all ppav's at which the gradient $d_z\tc\ep\de(\tau,0)$ vanishes.
 Such locus has expected codimension $g$ in $\calA_g(2)$.

For $g\geq 5$, it is known that each $\gn \chars\ep\de$ has an irreducible component, which we denote by $\calC\chars\ep\de$, that contains some irreducible component of $\calH_g(2)$; 
however, such $\gn \chars\ep\de$ 
has other irreducible components besides $\calC\chars\ep\de$
(for example, inside the decomposable locus $\calA_1(2)\times\calA_{g-1}(2)$
we will find some components of type $\calA_1(2)\times\tn\chars{\ep'}{\de'}$).

In genus $g=5$ we have that, at level $1$, this  component  is the  closure   of the  moduli space 
$\mathcal C$ of  the intermediate Jacobians of cubic threefolds.    In \cite[Section 4]{ACT} it is proven that   $\calH_5$ in $\calC$ is smooth (in an  orbifold sense)  and  the period map 
$$ \pi: \calC \lra \calA_5$$
 extends to a regular map at the generic point of $\calH_5$ in $\calC$.  Hence we have

 
  \begin{cor}[Irreducibility of $\wti{\calC}\chars\ep\de$]\label{cor:grad} 
  For $g=5$ the locus $\wti{\calC}\chars\ep\de$  is irreducible in $\HH_5$. 
  For $g\geq 6$
  the locus $\calC\chars\ep\de$ lifts to the union $\wti{\calC}\chars\ep\de$ of finitely many irreducible divisors in $\HH_g$.
  \end{cor}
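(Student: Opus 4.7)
The plan is to deduce both assertions directly from Proposition \ref{prop:irr-Mg}, using as input the fact that $\calC\chars\ep\de$, by definition, contains an irreducible component $\calH_g(2)_{\irr}$ of the hyperelliptic locus, together with the extra regularity information available in genus five.

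First I would check that the hypotheses of Proposition \ref{prop:irr-Mg} are met at the appropriate level. Since $\gn\chars\ep\de$ and its component $\calC\chars\ep\de$ are defined inside $\calA_g(2)$, and the level $n=2$ is even, we are in the situation ``$(\calH_g)_{\irr}$ with $n$ even'' of the proposition. Moreover, $\calC\chars\ep\de$ is irreducible by definition and, by the quoted fact, contains some $\calH_g(2)_{\irr}$ as a Zariski-closed (in fact locally closed) subset; a Zariski-open subset of $\calH_g(2)_{\irr}$ is therefore contained in $\calC\chars\ep\de$. Hence part (i) of Proposition \ref{prop:irr-Mg} immediately gives that $\wti{\calC}\chars\ep\de \subset \HH_g$ is connected with finitely many irreducible components. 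This proves the $g\geq 6$ statement.

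For $g=5$, the task is to upgrade this to irreducibility using Proposition \ref{prop:irr-Mg}(ii), which requires that $\calH_5(2)_{\irr}$ does not lie entirely inside the singular locus of $\calC\chars\ep\de$. This is exactly what the cited result of \cite{ACT} provides: the period map $\pi:\calC\rightarrow \calA_5$ from the moduli space of intermediate Jacobians of cubic threefolds extends regularly at the generic point of $\calH_5$ inside $\calC$, and $\calH_5$ sits smoothly (in the orbifold sense) in $\calC$. Passing to the level-$2$ cover, a Zariski-open dense subset of $\calH_5(2)_{\irr}$ is therefore contained in the smooth locus of $\calC\chars\ep\de$, and in particular $\calH_5(2)_{\irr}\not\subset\Sing(\calC\chars\ep\de)$. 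Proposition \ref{prop:irr-Mg}(ii) then yields irreducibility of $\wti{\calC}\chars\ep\de$ in $\HH_5$.

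The only potentially delicate step is the last one, namely verifying that the smoothness statement of \cite{ACT} at the generic point of $\calH_5$ in $\calC$ transfers to the level-$2$ cover $\calC\chars\ep\de\rightarrow \calC$ in the orbifold sense; this should be routine because the cover $\calA_5(2)\rightarrow \calA_5$ is \'etale (as an orbifold morphism) and smoothness is a local property preserved by \'etale covers. Everything else is a direct application of the machinery already developed: the content of the corollary reduces to identifying the right hypotheses in Proposition \ref{prop:irr-Mg} and citing the geometric input from \cite{ACT}.
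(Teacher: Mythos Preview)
Your overall strategy matches the paper's: both the $g\geq 6$ and the $g=5$ assertions are obtained from Proposition~\ref{prop:irr-Mg}(i) and (ii) respectively, using that $\calC\chars\ep\de$ contains a component $\calH_g(2)_{\irr}$ of the hyperelliptic locus at an even level.

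There is, however, one point in the $g=5$ argument where your justification is not watertight and where the paper supplies a different input. You deduce that $\calH_5(2)_{\irr}$ meets the smooth locus of $\calC\chars\ep\de$ from the \cite{ACT} statements that (a) the moduli space $\calC$ of cubic threefolds is smooth along the hyperelliptic stratum and (b) the period map $\pi:\calC\to\calA_5$ extends regularly there. But (a) and (b) concern the \emph{source} of the period map, not its image in $\calA_5$: a regular morphism from a smooth variety may have singular image, so smoothness of $\calC\chars\ep\de\subset\calA_5(2)$ along $\calH_5(2)_{\irr}$ does not follow from (a)+(b) alone. The paper's proof fills precisely this gap by invoking an additional result, communicated by Bernatska and drawn from \cite[Remark~9]{ber}, that $\calH_5(2)_{\irr}$ is contained in the smooth locus of $\calC\chars\ep\de$. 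Your last paragraph flags the \'etale passage $\calA_5(2)\to\calA_5$ as the ``delicate step'', but that step is indeed routine; the genuine subtlety is the passage from smoothness of the source of $\pi$ to smoothness of its image.
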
 
\begin{proof}  
The proof is an immediate consequence of the  above discussion.
Moreover, Julia Bernatska kindly communicated us that the above-mentioned component of
$\calH_5(2)$ is contained inside the smooth locus of $\calC\chars\ep\de$, as a consequence of  \cite[Remark 9]{ber}.
 \end{proof}

   
   \subsection{The Schottky form}   
    In this section
    we are going to exhibit an absolutely irreducible form of weight greater than $1/2$. 
 This will be the so-called Schottky form.



Let $S$ be an integral, positive-definite quadratic form on $\ZZ^m$,
and consider  the theta series
$$\theta_S (\tau)=\sum_{G\in \mathrm{Mat}_{m,g}(\ZZ)}\mathrm{exp}(\pi i\cdot\mathrm{tr}( G^{t}SG\tau  )). $$
Such $\theta_S$ is a modular form of weight $m/2$ with respect to a  subgroup  of   finite index $\Gamma \subset  \Gamma_g$.
If $S$ is even and unimodular, then  $\Gamma=  \Gamma_g$. 

In the following lemma we wish to highlight an interesting property of modular forms that are linear combination of theta series.
 
\begin{lm}[Absolute irreducibility of theta series propagates]\label{lemma:propagates}
Let $S_1,\dots,S_k$ be integral, positive-definite quadratic forms on $\ZZ^m$
and let $a_1,\dots,a_k\in\CC$, and consider for all $g\geq 3$
the following modular form
\[
h_g:=\sum_{i=1}^k a_i\theta_{S_i}.
\]
If $h_{g_0}$ is absolutely irreducible, then $h_g$ is universally irreducible for all $g\geq g_0$.
\end{lm}
 \begin{proof}
 We will prove that $h_g$ is an absolutely irreducible form for all $g\geq g_0$.
By Theorem \ref{thm:ag} it will follow that $h_g$ is universally irreducible for all $g\geq g_0$.

In order to show that $h_g$ is an absolutely irreducible form for all $g\geq g_0$,
we proceed by induction.

The initial case $g=g_0$ is given by hypothesis.
In order to deal with the inductive step, we assume $g>g_0$
and we recall that the Siegel operator
 $$\Phi(h)(\tau'): = \lim_{t\to +\infty} h \left(\begin{matrix} \tau ' & *  \\ * &it\end{matrix}\right) $$
sends a modular form $h$ of genus $g$ at level $n$ to a modular form $\Phi(h)$ of genus $g-1$ at level $n$,
in such a way that $\Phi(h_a h_b)=\Phi(h_a)\Phi(h_b)$.

By contradiction, suppose that
the form $h_g$ breaks as a product
  $h_g=h_{g, a}h_{g, b}$
  of modular forms $h_{g,a}$ and $h_{g,b}$ with respect to 
  a finite-index subgroup $\Gamma$ of $\Gamma_g$.
Since $\Gamma$ contains the full congruence subgroup $\Gamma_g(n)$ for some $n$,
we can assume without loss of generality that $\Gamma=\Gamma_g(n)$.
  Since $h_g$ is a linear combination of theta series we have 
    $$ \Phi(h_g)= h_{g-1}.$$
    This implies that $h_{g-1}=\Phi(h_{g,a})\Phi(h_{g,b})$ is a nontrivial
    factorization of modular forms with respect to $\Gamma_{g-1}(n)$, and so
    $h_{g-1}$ is not irreducible at level $n$. Such contradiction proves that
    $h_g$ is an absolutely irreducible form.
 \end{proof}

We now consider a special case of the above construction.

We recall that, for $m=16$, there are exactly two classes of 
integral, positive-definite quadratic forms on $\ZZ^{16}$: those associated to the lattices $E_8\oplus E_8$ and  $D_{16}^+$ . For any $g$ we consider the form  
  $$f_g=\theta_{E_8\oplus E_8}- \theta_{D_{16}^+}.$$
   It is  a  well-known fact that  $f_g$ vanishes identically for $g=1,2,3$. 
   It is Schottky's form  when $g=4$, cf.~\cite{sch} or  \cite{csb}. 
 For $g\geq 4$ we denote by $F_g$ the  divisor defined  by  $f_g$. We know that $F_4$  in $\calA_4$ is universally irreducible
 by Lemma \ref{lemma:irr-Jacobian}, since it coincides with the closure of the Jacobian locus $\calJ_4$.  
 %
   
   The same Lemma \ref{lemma:irr-Jacobian} cannot be applied in higher genus: though $F_g$ contains the hyperelliptic locus $\calH_g$
   (see \cite{poor1}), for $g\geq 5$ it does not contain the  Jacobian locus (see \cite{sch} or  \cite{csb}).
    
   Nevertheless, an immediate application of Lemma \ref{lemma:propagates} gives
  \begin{cor}[The Schottky form is universally irreducible]\label{cor:sch}
  The divisor $F_g$ of $\calA_g$ is universally irreducible for all $g\geq 4$.
  \end{cor}

\section{A criterion for arithmeticity in $\Sp_{2g}(\ZZ)$}\label{app:arithm}
 
%
We recall that the standard symplectic form $\omega$ on $\QQ^{2g}$, defined
by $\omega(e_i,e_j)=-\omega(e_j,e_i):=\delta_{j,g+i}$ for all $1\leq i\leq j\leq 2g$, is nondegenerate and integral on $\ZZ^{2g}$.

If $W\subseteq\QQ^{2g}$ is a vector subspace, we
denote by $\Sp(W,\omega)$ the group of linear automorphisms of $W$ that preserve the restriction of $\omega$ to $W$. We 
remark that $\Sp(\ZZ^{2g},\omega)=\Sp_{2g}(\ZZ)$ 
and that $\Sp(\QQ^{2g},\omega)=\Sp_{2g}(\QQ)$.

\begin{dfx}[Transvections in $(\ZZ^{2g},\omega)$]\label{def:transvections}
The {\it{transvection}} associated to $w\in\ZZ^{2g}$
is the integral symplectic transformation $T_w:\ZZ^{2g}\rar\ZZ^{2g}$ defined as $T_w(v):=v+\omega(v,w)w$. Two transvections $T_u,T_v$ are {\it{linearly independent}}
if $u,v$ are linearly independent as vectors of $\QQ^{2g}$.
\end{dfx}

Observe that $(T_w-I)(\ZZ^{2g})=\ZZ\cdot aw$ for some integer $a\neq 0$.

The aim of this short section is to prove the following.

\begin{prop}[A criterion of arithmeticity]\label{prop:commuting}
Let $G$ be a Zariski-dense subgroup of $\Sp_{2g}(\ZZ)$ that contains two
linearly independent, commuting transvections
$T_u,T_v$.
Then $G$ has finite index in $\Sp_{2g}(\ZZ)$.
\end{prop}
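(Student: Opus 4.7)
The plan is to derive finite index in $\Sp_{2g}(\ZZ)$ from Zariski-density by invoking [SV, Theorem 1.2] as a black box; the role of the two commuting transvections is precisely to provide the unipotent input required by that theorem.

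First I would record the algebraic constraint imposed on $u,v$ by commutativity. A direct computation from $T_w(x) = x + \Omega(x,w)w$ yields
\[
T_u T_v(x) - T_v T_u(x) = -\Omega(u,v)\bigl(\Omega(x,v)\, u + \Omega(x,u)\, v\bigr),
\]
and non-degeneracy of $\Omega$ combined with linear independence of $u,v$ shows that this difference vanishes for every $x$ if and only if $\Omega(u,v)=0$. Hence $u$ and $v$ span a rank-$2$ isotropic $\QQ$-plane $W \subset \QQ^{2g}$, and $\langle T_u, T_v\rangle \subset G$ is a free abelian rank-$2$ subgroup of unipotent elements of $\Sp_{2g}(\ZZ)$. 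This is exactly the geometric configuration appearing in the hypothesis of [SV, Theorem 1.2].

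Next, I would use Zariski-density of $G$ to verify the remaining (genericity) hypotheses of the cited criterion. Since the standard representation of $\Sp_{2g}(\QQ)$ on $\QQ^{2g}$ is irreducible, the $G$-orbits of $u$ and of $v$ each span $\QQ^{2g}$, and conjugation $g T_u g^{-1} = T_{gu}$ shows that $G$ contains transvections $T_{gu}, T_{gv}$ along a Zariski-dense collection of lines through the origin. In this setting [SV, Theorem 1.2] applies and directly yields $[\Sp_{2g}(\ZZ) : G] < \infty$.

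The main obstacle is of course [SV, Theorem 1.2] itself, which is the technical heart of the argument. Morally, its proof proceeds through the $p$-adic closures of $G$: Zariski-density together with a commuting pair of linearly independent transvections forces the closure of $G$ in $\Sp_{2g}(\ZZ_p)$ to contain the full unipotent radical of a suitable parabolic subgroup attached to an isotropic flag refining $W$; strong approximation together with the congruence subgroup property for $\Sp_{2g}$ (valid for $g \geq 2$) then upgrades this $p$-adic openness to finite index in $\Sp_{2g}(\ZZ)$. A self-contained proof of the proposition would essentially require reproducing that machinery, which is why the appendix will treat [SV] as a cited input rather than redoing the arithmeticity argument from scratch.
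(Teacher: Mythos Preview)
Your overall strategy---derive $\Omega(u,v)=0$ from commutativity and then cite \cite[Theorem 1.2]{SV} as a black box---is the same as the paper's. The gap is in the bridging step. The hypothesis of the Singh--Venkataramana theorem (as restated in Theorem~\ref{thm:SV}) is not ``$G$ contains two transvections along an isotropic $2$-plane'' or ``$G$ contains transvections along a Zariski-dense set of lines''; it requires \emph{three} linearly independent vectors $w_1,w_2,w_3$ spanning a $3$-dimensional $W$ with $\Omega(w_i,w_j)\neq 0$ for some $i,j$, and such that the restrictions $T'_{w_1},T'_{w_2},T'_{w_3}$ generate a subgroup of $\Sp_\Omega(W)$ meeting the unipotent radical nontrivially. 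Your pair $u,v$ spans only an isotropic $2$-plane, so $\Omega$ is identically zero there and neither condition is visible; the sentence ``this is exactly the geometric configuration appearing in the hypothesis of [SV, Theorem 1.2]'' is not correct as written.

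The paper fills this gap in two explicit moves. First, Zariski-density is used not just to say that $G$-orbits span $\QQ^{2g}$, but to pick a specific $\gamma\in G$ with $\Omega(\gamma u,u)\neq 0$ and $\Omega(\gamma u,v)\neq 0$; then $w_1=\gamma u$, $w_2=u$, $w_3=v$ are three independent vectors with $\Omega(w_1,w_2),\Omega(w_1,w_3)\neq 0$ and $\Omega(w_2,w_3)=0$, and $G$ contains $T_{w_1}=\gamma T_u\gamma^{-1}$, $T_{w_2}$, $T_{w_3}$. Second, a short computation (Lemma~\ref{lemma:radical}) shows that in this configuration a suitable word in $T'_{w_2},T'_{w_3}$ lands in the unipotent radical of $\Sp_\Omega(W)$. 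Only then does Theorem~\ref{thm:SV} apply. Your proposal gestures at the first move but does not carry it out, and omits the second entirely.
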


The proof of the above proposition will make essential use of

%

\begin{thm}[Singh-Venkataramana {\cite[Theorem 1.2]{SV}}]\label{thm:SV}
Suppose that $G\subseteq \Sp_{2g}(\ZZ)$ is a subgroup that satisfies the following properties:
\begin{itemize}
\item[(i)]
$G$ is Zariski dense inside $\Sp_{2g}(\ZZ)$;
\item[(ii)]
there exist three vectors $w_1,w_2,w_3\in\ZZ^{2g}$ such that
\begin{itemize}
\item
$W = \sum_{i=1}^3 \QQ\cdot w_i$ of $\QQ^{2g}$
is $3$-dimensional;
\item
$\Omega(w_i,w_j) \neq 0$ for some $i,j$;
\item
the group $G$ contains the transvections $T_{w_1},T_{w_2},T_{w_3}$;
\item
the subgroup of $\Sp(W,\omega)$
generated by the restrictions $T'_{w_1},T'_{w_2},T'_{w_3}$ to $W$ of the transvections $T_{w_1},T_{w_2},T_{w_3}$
contains a non-trivial element of the unipotent radical of $\Sp(W,\omega)$.
\end{itemize}
\end{itemize}
Then the group $G$ has finite index in
$\Sp_{2g}(\ZZ)$.
\end{thm}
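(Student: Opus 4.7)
The plan is to verify the hypotheses of Theorem \ref{thm:SV} for the triple $(w_1, w_2, w_3) = (u, v, \gamma u)$, where $\gamma \in G$ is chosen via Zariski density. First, I note that the commutativity of $T_u$ and $T_v$ forces $\Omega(u, v) = 0$: expanding $T_u T_v(x) - T_v T_u(x)$ yields $-\Omega(u, v)\bigl[\Omega(x, v)\, u + \Omega(x, u)\, v\bigr]$, which vanishes identically in $x$ only if $\Omega(u, v) = 0$, since $u$ and $v$ are linearly independent and $\Omega$ is non-degenerate. Hence $u, v$ span an isotropic $2$-plane.

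Next, I would use the Zariski density of $G$ in $\Sp_{2g}$ to pick $\gamma \in G$ such that $w_3 := \gamma u$ satisfies the three conditions: (a) $u, v, w_3$ are linearly independent over $\QQ$; (b) $c := \Omega(u, w_3) \neq 0$; and (c) $d := \Omega(v, w_3) \neq 0$. Each of these defines a non-empty Zariski-open subset of $\Sp_{2g}$ (since $\Sp_{2g}(\QQ)$ acts transitively on non-zero vectors of $\QQ^{2g}$ and $\Omega$ is non-degenerate), hence their intersection is a non-empty Zariski-open subset that meets the Zariski-dense subgroup $G$. The conjugate $T_{w_3} = \gamma T_u \gamma^{-1}$ then lies in $G$, as required.

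I would then set $W := \mathrm{span}_\QQ(u, v, w_3)$, a $3$-dimensional subspace on which $\Omega$ restricts non-trivially, thus securing the first two bullets of hypothesis (ii) of Theorem \ref{thm:SV}. The radical of $\Omega|_W$ is spanned by $r := du - cv$. A direct computation in the basis $(u, w_3, r)$ of $W$ shows that $T_u|_W$ acts as $u \mapsto u,\ w_3 \mapsto w_3 - cu,\ r \mapsto r$ while $T_v|_W$ acts as $u \mapsto u,\ w_3 \mapsto w_3 - \tfrac{d^2}{c} u + \tfrac{d}{c} r,\ r \mapsto r$. Consequently the product $(T_u|_W)^{d^2}\,(T_v|_W)^{-c^2}$ fixes both $u$ and $r$ and sends $w_3$ to $w_3 - cd\, r$. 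Since $cd \neq 0$, this is a non-trivial element of the unipotent radical of $\Sp_\Omega(W)$, and it lies in the subgroup generated by $T'_{w_1}, T'_{w_2}, T'_{w_3}$. Applying Theorem \ref{thm:SV} then yields that $G$ has finite index in $\Sp_{2g}(\ZZ)$.

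The main subtlety lies in the exponent choice $d^2$ and $-c^2$: these are arranged precisely so that the upper-triangular unipotent images of $T_u|_W$ and $T_v|_W$ in the $\SL_2$-quotient of $\Sp_\Omega(W)$ cancel, leaving only a non-trivial unipotent-radical contribution. This cancellation requires both $c$ and $d$ to be non-zero, which is why Zariski density must be invoked with three open conditions simultaneously rather than just one.
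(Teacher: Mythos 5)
The statement you were asked to prove is Theorem \ref{thm:SV} itself, i.e.\ the Singh--Venkataramana arithmeticity criterion. The paper does not prove this theorem: it is quoted verbatim from \cite[Theorem 1.2]{SV} and used as a black box. Your proposal also uses it as a black box --- your final step is ``Applying Theorem \ref{thm:SV} then yields that $G$ has finite index'' --- so, read as a proof of the stated theorem, it is circular and contains no content towards the actual result, which is a deep arithmeticity theorem whose proof (producing enough unipotent elements in $G$ to force finite index) appears nowhere in your argument. That is the gap: you have proved a downstream consequence, not the statement.

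What you have in fact written is a proof of Proposition \ref{prop:commuting} (incorporating the content of Lemma \ref{lemma:radical}), and as such it is correct and essentially identical to the paper's: the observation that commutativity of $T_u,T_v$ together with linear independence forces $\Omega(u,v)=0$, the use of Zariski density of $G$ to find $\gamma$ with $\Omega(u,\gamma u)\neq 0$ and $\Omega(v,\gamma u)\neq 0$ (note your condition (a) is automatic from (b), since $\mathrm{span}(u,v)$ is isotropic), and the exhibition of a non-trivial element of the unipotent radical of $\Sp_\Omega(W)$ all match the paper. The only differences are cosmetic: the paper orders the triple so that the vector pairing non-trivially with the other two sits in the chosen basis $(w_1,w_2,e)$ of $W$ and computes $(T'_{w_2})^{-a_2^2}T'_{w_3}$, whereas you use the basis $(u,w_3,r)$ with $r=du-cv$ and take $(T_u|_W)^{d^2}(T_v|_W)^{-c^2}$; I checked that computation and it does land on the map fixing $u$ and $r$ and sending $w_3\mapsto w_3-cd\,r$, which is indeed non-trivial in the unipotent radical since $cd\neq 0$.
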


\begin{rem}
The above result holds if $\omega$ is replaced by any non-degenerate
symplectic form on $\QQ^{2g}$, which is integral on $\ZZ^{2g}$.
\end{rem}

We wish to apply Theorem \ref{thm:SV} in the situation
described in the following lemma.

\begin{lm}[Finding elements in the unipotent radical]\label{lemma:radical}
Let $w_1,w_2,w_3\in\ZZ^{2g}$ be linearly independent
and such that $\omega(w_1,w_2),\omega(w_1,w_3)\neq 0$ and $\omega(w_2,w_3)=0$.
Then the group generated by the restrictions of $T_{w_1},T_{w_2},T_{w_3}$ to
$W=\mathrm{Span}(w_1,w_2,w_3)$ contains a nontrivial element of the unipotent radical of $\Sp(W,\omega)$.
\end{lm}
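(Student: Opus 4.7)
The plan is to exhibit explicitly a nontrivial element of the unipotent radical of $\Sp_\Omega(W)$ as a product of powers of the restrictions $T'_{w_2}$ and $T'_{w_3}$. Set $a := \Omega(w_1,w_2)$ and $b := \Omega(w_1,w_3)$, both nonzero by hypothesis. First I would observe that $\Omega|_W$ has rank two, with one-dimensional radical spanned by $k := b w_2 - a w_3$, and that each $T'_{w_i}$ fixes $k$ pointwise (since $T_u(k) = k + \Omega(k,u)u$ and $k \in \ker(\Omega|_W)$).

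Next I would unwind the structure of $\Sp_\Omega(W)$. Any element of $\Sp_\Omega(W)$ must preserve the radical $\langle k\rangle$, and the induced form on $W/\langle k\rangle$ is nondegenerate symplectic, giving homomorphisms $\Sp_\Omega(W) \to \SL(W/\langle k\rangle) \cong \SL_2$ and $\Sp_\Omega(W) \to \GG_m$ (scaling of $k$). A short verification yields $\Sp_\Omega(W) \cong \GG_a^2 \rtimes (\SL_2 \times \GG_m)$, with unipotent radical $U \cong \GG_a^2$ equal to the kernel of the combined map to $\SL_2 \times \GG_m$.

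The hypothesis $\Omega(w_2, w_3) = 0$ ensures that $T'_{w_2}$ and $T'_{w_3}$ commute, so their powers multiply freely. In the quotient $W/\langle k\rangle$ one has $\bar w_3 = (b/a)\bar w_2$, so the images $\bar T'_{w_2}$ and $\bar T'_{w_3}$ both lie in a single one-parameter unipotent subgroup of $\SL_2$; explicitly, in the basis $(\bar w_1, \bar w_2)$ one computes $\bar T'_{w_2} = \bigl(\begin{smallmatrix}1 & 0 \\ a & 1\end{smallmatrix}\bigr)$ and $\bar T'_{w_3} = \bigl(\begin{smallmatrix}1 & 0 \\ b^2/a & 1\end{smallmatrix}\bigr)$. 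Hence the element
$$
u := (T'_{w_3})^{a^2}\, (T'_{w_2})^{-b^2}
$$
acts as the identity on $W/\langle k\rangle$ and fixes $k$, so $u$ lies in $U$.

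To conclude I would compute $u(w_1)$ using the identity $T_w^n(v) = v + n\,\Omega(v,w)\,w$: this yields $u(w_2) = w_2$, $u(k) = k$, and $u(w_1) = w_1 - ab^2\, w_2 + a^2 b\, w_3 = w_1 - ab\cdot k$. Since $ab \neq 0$, the element $u$ is a nontrivial element of the unipotent radical. The computation is elementary once the structure of $\Sp_\Omega(W)$ has been unwound; the only substantive observation is that the combined hypotheses $\Omega(w_2,w_3) = 0$ and $\Omega(w_1,w_2),\Omega(w_1,w_3)\neq 0$ force $\bar T'_{w_2}$ and $\bar T'_{w_3}$ to sit in a common one-parameter unipotent subgroup of the Levi factor, which is precisely what allows commuting powers of $T'_{w_2}$ and $T'_{w_3}$ to cancel in the Levi and survive in the radical.
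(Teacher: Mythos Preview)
Your proof is correct and follows essentially the same approach as the paper: both arguments exhibit an explicit nontrivial element of the unipotent radical as a product of integer powers of $T'_{w_2}$ and $T'_{w_3}$, exploiting that these two transvections commute and project to the same one-parameter unipotent subgroup of the Levi quotient. The paper works in the basis $(w_1,w_2,e)$ with $e$ a generator of the radical and writes down the $3\times 3$ matrices directly, obtaining $(T'_{w_2})^{-a_2^2}T'_{w_3}$ after rescaling $w_3$ to make $a_2\in\ZZ$; your choice $u=(T'_{w_3})^{a^2}(T'_{w_2})^{-b^2}$ with $a=\Omega(w_1,w_2)$, $b=\Omega(w_1,w_3)$ avoids any rescaling and gives a slightly cleaner final formula $u(w_1)=w_1-ab\cdot k$, but the underlying idea is the same.
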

\begin{proof}
Note that it is enough to prove the statement with $(w_1,w_2,w_3)$ replaced by nonzero multiples $(k_1w_1,k_2w_2,k_3w_3)$.

Let $e\in\ZZ^n$ be a $\QQ$-generator of $\mathrm{Rad}(W,\omega)=\QQ\cdot e$,
and write $w_3=a_1 w_1+a_2 w_2+be$, with $a_1,a_2,b\in\QQ$ and $b\neq 0$.
Up to replacing $w_3$ with a multiple, we can assume that $a_1,a_2,b\in\ZZ$.

Let $\mathcal{B}=(w_1,w_2,e)$ be a $\QQ$-basis of $W$.
Then elements of the unipotent radical of $\Sp(W,\omega)$ are represented by the matrices
\[
\left(\begin{array}{ccc}
1 & 0 & 0\\
0 & 1 & 0\\
s & t & 1
\end{array}\right),\qquad\text{with $t,s\in\QQ$}
\]
with respect to the basis $(w_1,w_2,e)$ of $W$.
Call now $\omega_{ij}=\omega(w_i,w_j)$
and note that $\omega_{13}=a_2\omega_{12}\neq 0$ and $\omega_{23}=a_1\omega_{21}=0=a_1$.
Then the restrictions $T'_{w_1}$ of $T_{w_i}$ to $W$ can be written as
\begin{align*}
& T'_{w_1}=\left(\begin{array}{ccc}
1 & \omega_{21} & 0\\
0 & 1 & 0\\
0 & 0 & 1
\end{array}\right),\qquad
T'_{w_2}=\left(\begin{array}{ccc}
1 & 0 & 0\\
\omega_{12} & 1 & 0\\
0 & 0 & 1
\end{array}\right),\\
& T'_{w_3}=\left(\begin{array}{ccc}
1 & 0 & 0\\
a_2^2\omega_{12} & 1 & 0\\
ba_2\omega_{12} & 0 & 1
\end{array}\right)
\end{align*}
with respect to the basis $\mathcal{B}$.
It follows that
\[
(T'_{w_2})^{-a_2^2}T'_{w_3}=
\left(\begin{array}{ccc}
1 & 0 & 0\\
-a_2^2\omega_{12} & 1 & 0\\
0 & 0 & 1
\end{array}\right)\left(\begin{array}{ccc}
1 & 0 & 0\\
a_2^2\omega_{12} &1 & 0\\
ba_2\omega_{12} & 0 & 1
\end{array}\right)=
\left(\begin{array}{ccc}
1 & 0 & 0\\
0 &1 & 0\\
ba_2\omega_{12} & 0 & 1
\end{array}\right).
\]
Since $ba_2\neq 0$, the element $(T'_{w_2})^{-a_2^2}T'_{w_3}\neq I$ is in the unipotent radical of $\Sp(W,\omega)$.
\end{proof}

As a consequence, we easily obtain our wished criterion.

\begin{proof}[Proof of Proposition \ref{prop:commuting}]
The subset of $\Sp_{2g}(\CC)$ consisting of elements
$\tau\in\Sp_{2g}(\CC)$ such that $\omega(\tau(u),u)\omega(\tau(u),v)=0$
is a proper algebraic subvariety of $\Sp_{2g}(\CC)$.
Since $G$ is Zariski-dense, there exists $\gamma\in G$ such that
$\omega(\gamma(u),u)\neq 0$ and $\omega(\gamma(u),v)\neq 0$.

Now, $u,v$ are linearly independent, and $T_u,T_v$ commute, which implies that $\omega(u,v)=0$.
Hence, the vectors $w_1=u$, $w_2=\gamma(u)$, $w_3=v$ are linearly independent
and $\omega(w_1,w_2),\omega(w_2,w_3)\neq 0$ whereas $\omega(w_1,w_3)=0$.
Moreover, $G$ contains $T_{w_1}$, $T_{w_2}=\gamma T_{w_1}\gamma^{-1}$, $T_{w_3}$.
Lemma \ref{lemma:radical} then implies that the hypotheses of Theorem \ref{thm:SV} are satisfied, and so
$G$ has finite index in $\Sp_{2g}(\ZZ)$.
\end{proof}

In order to verify the hypothesis of Zariski-density in Proposition \ref{prop:commuting}
when dealing with absolutely irreducible subvarieties in a finite \'etale cover of $\calA_g$,
we will make use of the following observation.

\begin{lm}\label{lemma:dense}
Let $\Gamma$ be a finite-index subgroup of $\Gamma$ and let $Z\subset\calA_g(\Gamma)$
be an absolutely irreducible subvariety.
Then the image of $\pi_1(Z_{\sm})\rar\Gamma_g$
is Zariski dense.
\end{lm}
\begin{proof}
Absolute irreducibility of $Z$ implies that
the image of $\pi_1(Z_{\sm})\rar\Gamma$
surjects onto every finite quotient of $\Gamma$.
Since $\Gamma_g$ is residually finite,
$G$ is Zariski-dense in $\Gamma$
by \cite[Proposition 3]{margulis}, and so is Zariski-dense in $\Gamma_g$.
\end{proof}

\appendix

\renewcommand{\thesection}{\Roman{section}}

\section{Subvarieties of $\calM_g$, Dehn twists and transvections}\label{sec:dehn}


We recall that the mapping class group $\MCG_g$ can be identified
to the orbifold fundamental group of $\calM_g$. Moreover the homomorphism
induced by the Torelli morphism $t_g:\calM_g\rar\calA_g$
at the level of fundamental groups $\pi_1(t_g):\pi_1(\calM_g)\rar\pi_1(\calA_g)$
identifies to the standard symplectic representation
$\rho:\mathrm{MCG}_g\rar \Gamma_g=\Sp_{2g}(\ZZ)$.
We recall the following classical result (see, for instance, \cite[Theorem 6.4]{farb-margalit}).

\begin{fact}\label{fact:rho-surjective}
$\rho$ is surjective.
\end{fact}

The following is an immediate consequence.

\begin{cor}\label{cor:rho-surjective}
For every subgroup $\Gamma$ of $\Gamma_g$,
the natural homomorphism $\pi_1(\calJ_g(\Gamma))\rar\Gamma$ is surjective, where $\calJ_g(\Gamma)$ is
the Jacobian locus in $\calA_g(\Gamma)$.
\end{cor}

We also recall that the Torelli morphism 
$t_g$ can be
extended to a map 
$\ol{t}_g:\ol{\calM}_g\rar\ol{\calA}_g$
from the Deligne-Mumford compactification $\ol{\calM}_g$ to the Satake compactification $\ol{\calA}_g$.

In order to state Proposition \ref{prop:two-twists},
we denote by $\delta^{\IRR}_h$ the locally-closed locus inside $\ol{\calM}_g$
that parametrizes curves whose normalization has genus $g-h$
(the genus of a disconnected curve being the sum of the genera of its connected components): in other words, points in $\delta^{\IRR}_h$ are isomorphism classes of
stable curves with exactly $h$ non-disconnecting nodes (and possibly other disconnecting nodes).
We observe that $\delta^{\IRR}_h$ is a subvariety of codimension $h$.


In this appendix we want to show the following result
(recall Definition \ref{def:transvections}).

\begin{prop}[Finding a pair of commuting transvections I]\label{prop:two-twists}
Let $Y$ be an irreducible subvariety in $\calM_g$.
Suppose that
\begin{itemize}
\item[(a)]
$\ol{Y}$ intersects the locally-closed boundary stratum $\delta^{\IRR}_h$ for some $h>0$;
\item[(b)]
the intersection $\ol{Y}\cap\delta^{\IRR}_h$ is non-compact.
\end{itemize}
Then the image of $\pi_1(Y_{\sm})\rar \Gamma_g$ contains
two linearly independent, commuting transvections.
\end{prop}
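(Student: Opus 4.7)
The plan is to use hypotheses (a) and (b) to find a point $p\in\ol Y$ at which the curve $C_p$ is deep enough in the boundary of $\ol{\calM}_g$ to carry two independent vanishing cycles, and then to realise the two associated commuting Dehn twists in the image of $\pi_1(Y_{\sm})\to\MCG_g$. First, since $\ol Y$ is compact in $\ol{\calM}_g$ while $\ol Y\cap\delta^{irr}_h$ is not, there is a point $p\in\ol Y$ lying in the closure of $\ol Y\cap\delta^{irr}_h$ but not in $\ol Y\cap\delta^{irr}_h$ itself. Since $\ol{\delta^{irr}_h}$ is the closed locus of stable curves whose dual graph has first Betti number at least $h$, the dual graph $\Gamma_p$ of $C_p$ satisfies $b_1(\Gamma_p)\geq h+1\geq 2$.

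I would then fix a spanning tree of $\Gamma_p$ and pick two edges $e_1,e_2$ outside it; these correspond to two distinct non-disconnecting nodes $\nu_1,\nu_2$ of $C_p$. On a nearby smoothing $C_t$ of $C_p$, the vanishing cycles $\gamma_1,\gamma_2$ of $\nu_1,\nu_2$ are disjoint simple closed curves, and their homology classes $[\gamma_1],[\gamma_2]\in H_1(C_t;\ZZ)$ are linearly independent: this is the standard Clemens--Schmid-type fact that the vanishing cycles of the non-tree edges of $\Gamma_p$ form a basis of the ``toric'' graded piece of the limit mixed Hodge structure on $H_1(C_t)$. Disjointness of $\gamma_1,\gamma_2$ implies that the Dehn twists $T_{\gamma_1},T_{\gamma_2}\in\MCG_g$ commute, and under the symplectic representation $\MCG_g\to\Gamma_g$ they map to two linearly independent commuting transvections $T_{[\gamma_1]},T_{[\gamma_2]}\in\Gamma_g$, which is the desired pair.

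The main obstacle is to verify that both $T_{\gamma_1}$ and $T_{\gamma_2}$ actually lie in the image of $\pi_1(Y_{\sm})\to\MCG_g$. An analytic orbifold chart of $\ol{\calM}_g$ at $p$ has the form $(\Delta^n\times\Delta^{3g-3-n})/\Aut(C_p)$ with coordinates $(t_1,\ldots,t_n,s)$, where, after relabelling, $t_1$ and $t_2$ are the smoothing parameters of $\nu_1$ and $\nu_2$; the Picard--Lefschetz monodromy around the boundary divisor $D_i=\{t_i=0\}$ equals $T_{\gamma_i}$, and the local fundamental group of the smooth locus is $\ZZ^n$, generated by loops $\mu_i$ around each $D_i$. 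One must show that $\mu_1$ and $\mu_2$ both lie in the image of $\pi_1(Y_{\sm}\cap U)\to\ZZ^n$. My approach is to pass to a proper birational resolution $\sigma:\ol Y'\to\ol Y$ with $\sigma^{-1}(\partial\calM_g\cap\ol Y)$ a simple normal crossings divisor in $\ol Y'$; the positive-dimensional family of approaches to $p$ in $\ol Y\cap\delta^{irr}_h$, together with the strict increase of $b_1(\Gamma)$ at $p$, should then force the existence at some preimage $p'\in\ol Y'$ of $p$ of two distinct irreducible components $E_1,E_2$ of $\sigma^{-1}(\partial\calM_g\cap\ol Y)$ with $\sigma(E_i)\subset D_i$ for $i=1,2$. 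Small loops in $\ol Y'\setminus\sigma^{-1}(\partial\calM_g)$ encircling $E_1$ and $E_2$ then push forward via $\sigma$ to loops in $Y_{\sm}$ whose monodromies in $\MCG_g$ are $T_{\gamma_1}$ and $T_{\gamma_2}$, completing the argument.
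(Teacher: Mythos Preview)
Your identification of the limit point $p$ with $b_1(\Gamma_p)\geq h+1$ is correct and coincides with the paper's choice of the deep boundary point. The gap is in the last paragraph: you aim to place the \emph{individual} Dehn twists $T_{\gamma_1}$ and $T_{\gamma_2}$ in the image of $\pi_1(Y_{\sm})$, and this is in general too much to ask. A small loop in $\ol Y'$ around an irreducible component $E$ of $\sigma^{-1}(\partial\calM_g\cap\ol Y)$ has monodromy $\prod_j T_{\gamma_j}^{m_j}$ with $m_j=\mathrm{mult}_E(\sigma^*D_j)$; your hypothesis $\sigma(E_i)\subset D_i$ only forces $m_i\geq 1$ and says nothing about the other $m_j$, so the loop around $E_i$ need not act as a pure $T_{\gamma_i}$. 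Thus even granting the existence of your $E_1,E_2$ (which is asserted with a ``should then force'' rather than proven), what you actually obtain is two commuting transvections along \emph{positive integer combinations} of the $[\gamma_j]$, and you have not explained why these two combinations are linearly independent in $H_1$.

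The paper sidesteps this by using two boundary points of $\ol Y$ rather than one. It keeps your point $p$ as $b_2$, but also picks a nearby $b_1\in\ol Y\cap\delta^{irr}_h$ in the same contractible chart, and at each of $b_1,b_2$ it sends in a single holomorphic disc $f:\Delta\to\ol Y$ with $f(0)=b_i$ and $f(\Delta^*)\subset Y_{\sm}$. Each disc contributes one product of Dehn twists and hence one transvection: $T_v$ with $v=\sum_{i\le k}a_i[\beta_i]$ supported only on the $k$ cycles vanishing at $b_1$, and $T_w$ with $w=\sum_{i\le m}c_i[\beta_i]$ supported on the strictly larger set of $m>k$ cycles vanishing at $b_2$. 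Linear independence of $v$ and $w$ then follows because passing from $b_1$ to $b_2$ pinches at least one additional non-separating loop $\beta_j$ with $j>k$ whose class lies outside the span of $[\beta_1],\dots,[\beta_k]$. The ingredient you are missing is precisely this second approach through a point of $\delta^{irr}_h$ itself, which pins down the support of one of the two transvection vectors and makes the independence argument immediate.
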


In order to prove Proposition \ref{prop:two-twists},
we first recall that $\ol{\calM}_g$ supports a universal family of stable curves with
total space $\ol{\calC}_g$.
Moreover, a path between $b_0\in \calM_g$ and $b\in \pa\calM_g$
induces a map $C_{b_0}\rar C_b$ that shrinks disjoint loops of $C_{b_0}$ to nodes of $C_b$: call them {\it{shrinking loops}}.

\begin{notation}
If $\gamma\subset C_{b_0}$ is a non-trivial simple loop, then
we denote by $\mathrm{Tw}_\gamma\in\MCG(C_{b_0})$ the {\it{right Dehn twist}} along $\gamma$ (see \cite[Chapter 3]{farb-margalit}).
If $\gamma_1,\dots,\gamma_k$ are non-trivial disjoint simple loops on $C_{b_0}$ and $a_1,\dots,a_k>0$ are integers, then
$\mathrm{Tw}_{\gamma_1}^{a_1}\cdots \mathrm{Tw}_{\gamma_k}^{a_k}$ is the element of $\MCG(C_{b_0})$ obtained by 
performing $a_i$ Dehn twists along $\gamma_i$ for $i=1,\dots,k$.
\end{notation}

The following statement is rather standard.

\begin{lm}[Boundary strata and Dehn twists]\label{lemma:twist}
Let $\ol{Y}$ be an irreducible (not necessarily closed) analytic subvariety of $\ol{\calM}_g$, call $Y:=\ol{Y}\cap\calM_g$ and let $b\in\pa Y$.
Suppose $b$ belongs to a locally-closed boundary stratum $\delta$ 
of $\ol{\calM}_g$.
Fix $b_0\in Y_{\sm}$ and a path in $Y_{\sm}\cup\{b\}$ from $b_0$ to $b$
and let $\gamma_1,\dots,\gamma_k\subset C_{b_0}$ be the induced shrinking loops.
Then there exists $a_1,\dots,a_k\geq 1$ such that
the image of $\pi_1(Y_{\sm},b_0)\rar \mathrm{MCG}(C_{b_0})$ contains 
$\mathrm{Tw}_{\gamma_1}^{a_1}\cdots \mathrm{Tw}_{\gamma_k}^{a_k}$.
Hence, the image of $\pi_1(Y_{\sm},b_0)\rar \Sp(H_1(C_{b_0}))$
contains the transvection associated to the vector
$[a_1\gamma_1+\dots+a_k\gamma_k]$.
\end{lm}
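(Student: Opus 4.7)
The plan is a local monodromy argument near $b$. First I will pass to a log resolution $\pi\colon\wti{Y}\to\ol{Y}$ with $\wti{Y}$ smooth and $\wti{E}:=\pi^{-1}(\pa Y)$ a simple normal crossings divisor, and lift the path $\alpha$ from $b_0$ to $b$ to a path $\wti\alpha$ in $\wti{Y}$ with endpoints $\wti{b}_0\in\wti{Y}\setminus\wti{E}$ and $\wti{b}\in\wti{E}$, meeting $\wti{E}$ only at $\wti{b}$. This places me in the familiar setting of ``small loops around a normal crossings divisor'' in a smooth ambient space.

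Next, near $b\in\ol{\calM}_g$ I choose orbifold coordinates $(z_1,\dots,z_N)$ in which the $k$ boundary divisors $B_1,\dots,B_k$ passing through $b$ (corresponding to smoothing the $k$ nodes of $C_b$, hence to the shrinking loops $\gamma_1,\dots,\gamma_k$) are $B_j=\{z_j=0\}$. By Picard--Lefschetz the monodromy of the universal curve around $B_j$ in $\calM_g$ is exactly the Dehn twist $\mathrm{Tw}_{\gamma_j}$, and these twists commute in $\MCG_g$ since the $\gamma_j$ are pairwise disjoint. Near $\wti{b}\in\wti{Y}$ I pick coordinates $(w_1,\dots,w_M)$ with $\wti{E}=\{w_1\cdots w_l=0\}$ through $\wti{b}$; then along $\wti{Y}\to\ol{\calM}_g$ one has the local factorization
\[
\wti{z}_j \;=\; u_j\prod_{i=1}^{l} w_i^{a_{ij}},\qquad u_j\in\calO^\times_{\wti{Y},\wti{b}},\ a_{ij}\in\ZZ_{\geq 0}.
\]
Since $Y=\ol{Y}\cap\calM_g$ is Zariski dense in $\ol{Y}$, no irreducible component of $\ol{Y}$ is contained in $B_j$, so $\wti{z}_j$ is not identically zero; combined with $\wti{z}_j(\wti{b})=z_j(b)=0$ and $u_j(\wti{b})\neq0$, this forces at least one exponent $a_{ij}$ to be strictly positive, for each $j\in\{1,\dots,k\}$.

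Then I take, for each component $\wti{E}_i$ of $\wti{E}$ through $\wti{b}$, a small loop $\sigma_i$ in $\wti{Y}\setminus\wti{E}$ around $\wti{E}_i$ based at a point near $\wti{b}$. By the monomial factorization above, the image of $\sigma_i$ in $\pi_1(\calM_g)\cong\MCG_g$ is $\prod_{j=1}^{k}\mathrm{Tw}_{\gamma_j}^{a_{ij}}$, where $\gamma_j$ is identified with its parallel transport along $\wti\alpha$. Setting $\sigma=\sigma_1\cdots\sigma_l$ and $a_j=\sum_{i=1}^{l}a_{ij}\geq 1$, the conjugate $\wti\alpha\cdot\sigma\cdot\wti\alpha^{-1}$ lies in $\pi_1(\wti{Y}\setminus\wti{E},\wti{b}_0)$ and pushes down to a loop in $Y_{\sm}$ based at $b_0$ whose image in $\MCG(C_{b_0})$ is the asserted product $\mathrm{Tw}_{\gamma_1}^{a_1}\cdots\mathrm{Tw}_{\gamma_k}^{a_k}$. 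Passing to the symplectic representation gives the corresponding element of $\Sp(H_1(C_{b_0}))$, acting as $v\mapsto v+\sum_j a_j\Omega(v,[\gamma_j])[\gamma_j]$, which is the ``transvection associated to $[a_1\gamma_1+\dots+a_k\gamma_k]$'' of the statement (the classes $[\gamma_j]$ being mutually $\Omega$-orthogonal since the $\gamma_j$ are disjoint).

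\textbf{Main obstacle.} The technical heart is the positivity $\sum_i a_{ij}\geq 1$ for each shrinking divisor $B_j$: it is what guarantees that every $\gamma_j$ actually appears with a positive power. Everything else (existence of the log resolution, Picard--Lefschetz monodromy, conjugation transport) is standard, but this positivity step is where one must combine the local equation $\wti{z}_j=u_j\prod w_i^{a_{ij}}$ with the two facts that $\wti{z}_j$ does not vanish identically on $\wti{Y}$ (since $\ol{Y}\not\subset B_j$) and that $\wti{z}_j(\wti{b})=0$ (since $b\in B_j$).
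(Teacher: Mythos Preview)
Your argument is correct in spirit and reaches the right conclusion, but it is considerably heavier than the paper's. The paper bypasses the log resolution entirely: it simply chooses a holomorphic map $f\colon\Delta\to\ol Y$ from the unit disk with $f(0)=b$ and $f(\Delta^*)\subset Y_{\sm}$, takes $b_0=f(\tilde b_0)$ for some $\tilde b_0\in\Delta^*$ (transporting to the given basepoint via the connectedness of $Y_{\sm}$), and observes that if $z_j\circ f$ vanishes to order $a_j\geq 1$ at the origin then the monodromy of the generator of $\pi_1(\Delta^*,\tilde b_0)$ in $\MCG(C_{b_0})$ is $\prod_j\mathrm{Tw}_{\gamma_j}^{a_j}$ by Picard--Lefschetz. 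This one-dimensional probe delivers the positive exponents and the loop in $Y_{\sm}$ simultaneously, with no further machinery.

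Your route via a log resolution works, but there is a small wrinkle you elide: with $\wti E=\pi^{-1}(\pa Y)$ as you define it, the complement $\wti Y\setminus\wti E=\pi^{-1}(Y)$ maps onto $Y$, not onto $Y_{\sm}$, so the loop $\wti\alpha\cdot\sigma\cdot\wti\alpha^{-1}$ pushes down to $Y$ and may a priori meet $Y_{\sing}$. The fix is routine---enlarge $\wti E$ to $\pi^{-1}(\pa Y\cup\ol{Y_{\sing}})$ and resolve that pair instead; the extra components of $\wti E$ through $\wti b$ only add further nonnegative exponents $a_{ij}$, and your positivity argument $\sum_i a_{ij}\geq 1$ goes through unchanged since $\wti z_j^{-1}(0)\subset\pi^{-1}(\pa Y)\subset\wti E$ still holds. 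What your approach buys is a clean separation of the contributions of the individual local branches of the boundary in $\wti Y$; what the paper's buys is brevity and no appeal to resolution of singularities.
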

\begin{proof}
Pick a holomorphic map $f:\Delta\rar \ol{Y}$ such that $f(0)=b$ and
$f(\Delta^*)$ is contained in $Y_{\sm}$.
Since $Y_{\sm}$ is connected, we can choose $b_0$ as $b_0=f(\tilde{b}_0)$ for some $0\neq\tilde{b}_0\in\Delta$.
Every branch of $\pa\calM_g$ that contains $\delta$ corresponds to a loop $\gamma_i$ on $C_{b_0}$.
If $f$ has multiplicity $a_i\geq 1$ along the $i$-th branch at $\tilde{b}_0$, the image of $\pi_1(\Delta^*,\tilde{b}_0)\rar\mathrm{MCG}(C_{b_0})$
is generated by $\prod_{i=1}^k\mathrm{Tw}_{\gamma_i}^{a_i}$, and the conclusion follows.
\end{proof}

The claimed result is an easy consequence of the above lemma.

\begin{proof}[Proof of Proposition \ref{prop:two-twists}]
Let $b_2$ be a point in the boundary of $\ol{Y}\cap\delta^{\IRR}_h$.
Pick a contractible neighbourhood $U$ of $b_2$ inside $\ol{\calM}_g$
such that $U\cap\ol{\delta}^{\IRR}_h$ and $U\cap \ol{Y}$ are contractible too.
Let $b_0\in U\cap Y_{\sm}$ and $b_1\in U\cap \delta^{\IRR}_h$.
Connect $b_0$ to $b_1$ and to $b_2$ through paths contained in $Y_{\sm}$
(except at $b_1,b_2$). Call $\beta_1,\dots,\beta_k$ the disjoint loops
in $C_{b_0}$ that are shrunk by the map $C_{b_0}\rar C_{b_1}$.
Up to isotopy, we can assume that the map $C_{b_0}\rar C_{b_2}$ shrinks
$\beta_1,\dots,\beta_k$ and the other loops $\beta_{k+1},\dots,\beta_m$ to nodes.
By Lemma \ref{lemma:twist} the image of $\pi_1(Y_{\sm})\rar\Gamma_g$ contains
the commuting transvections $T_v,T_w$, where $v=a_1\beta_1+\cdots+a_k\beta_k$
and $w=c_1\beta_1+\dots+c_m\beta_m$ with $a_i,c_i\geq 1$.
Since $h\geq 1$, at least one loop $\beta_1,\dots,\beta_k$ is non-disconnecting and so the vector $v$ is non-zero.
Since $b_2\in\ol{\delta}^{\IRR}_{h+1}$, 
there exists a loop $\beta_i$ with $k+1\leq i\leq m$ such that
$C_{b_0}\setminus(\beta_1\cup\dots\cup\beta_k)$
and $C_{b_0}\setminus(\beta_1\cup\dots\cup\beta_k\cup\beta_i)$
have the same number of connected components.
It follows that $w$ is not a multiple of $v$, and so $v,w$ are linearly independent.
\end{proof}

We now want to use Proposition \ref{prop:two-twists}
to obtain a similar statement but for subvarieties of $\calA_g(\Gamma)$
instead of $\calM_g$, where $\Gamma$ is any finite-index subgroup of $\Gamma_g$.

We begin with an elementary lemma.

\begin{lm}\label{lemma:commensurable}
Let $\iota:Z\rar\calA_g$ the inclusion of a subvariety and
$N,M$ be subgroups of $\pi_1(Z)$ such that $[N:M\cap N]<+\infty$.
Denote by $\iota_*:\pi_1(Z)\rar \pi_1(\calA_g)=\Gamma_g$ the homomorphism induced by $\iota$.
If $\iota_*(N)$ contains a pair of linearly independent, commuting transvections, then $\iota_*(M)$ does.
\end{lm}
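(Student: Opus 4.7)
The plan is to replace the given transvections in $\iota_*(N)$ by suitable powers that land in $\iota_*(M \cap N) \subseteq \iota_*(M)$. The only subtle arithmetic point is that a power $T_w^k$ is not in general of the form $T_{w'}$ for an integer vector $w'$; however, when $k = m^2$ is a perfect square we have the identity
\[
T_w^{m^2}(v) = v + m^2\,\Omega(v,w)\,w = v + \Omega(v, mw)(mw) = T_{mw}(v),
\]
so $T_w^{m^2} = T_{mw}$ is again a transvection (associated to the integer vector $mw$). This observation is the technical core of the proof.

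First I would choose lifts $n_u, n_v \in N$ with $\iota_*(n_u) = T_u$ and $\iota_*(n_v) = T_v$. Next, set $d := [N : M \cap N] < \infty$. The left-multiplication action of $N$ on the finite coset space $N/(M \cap N)$ gives a homomorphism $N \to \mathrm{Sym}(d)$; the image of $n_u$ has order dividing $d!$, so $n_u^{d!}$ acts trivially on the coset space, hence fixes the trivial coset, i.e.\ $n_u^{d!} \in M \cap N$. The same argument gives $n_v^{d!} \in M \cap N$.

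Finally, set $k := (d!)^2$, a perfect square. Since $M \cap N$ is a subgroup, $n_u^k = (n_u^{d!})^{d!} \in M \cap N \subseteq M$, and similarly $n_v^k \in M$. Applying $\iota_*$ and using the identity from the first paragraph,
\[
\iota_*(n_u^k) = T_u^k = T_{d!\, u}, \qquad \iota_*(n_v^k) = T_v^k = T_{d!\, v}.
\]
These are transvections associated to the integer vectors $d!\,u$ and $d!\,v$, which are $\QQ$-linearly independent since $u, v$ are, and they commute because $T_u$ and $T_v$ do (and therefore so do any of their powers). Hence $\iota_*(M)$ contains the required pair of linearly independent, commuting transvections. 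The only step that requires care is the perfect-square trick, since without it one only obtains the non-transvection elements $T_u^{d!}, T_v^{d!}$; once one observes $T_w^{m^2} = T_{mw}$, the rest is routine group theory.
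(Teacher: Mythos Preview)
Your proof is correct and follows essentially the same route as the paper: lift the two transvections to elements of $N$, raise to a power that lands in $M\cap N$, and push forward via $\iota_*$. Your perfect-square refinement (taking $k=(d!)^2$) is in fact more careful than the paper's own argument, which writes $\iota_*(\alpha^m)=T_{mv}$; strictly speaking $T_v^m(x)=x+m\,\Omega(x,v)v$ equals $T_{cv}(x)=x+c^2\Omega(x,v)v$ only when $m=c^2$ is a perfect square, so your extra squaring is exactly what is needed to make the resulting elements transvections in the precise sense $T_{w'}$.
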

\begin{proof}
Let $\alpha,\beta\in N$ such that
$T_v=\iota_*(\alpha),T_w=\iota_*(\beta)\in \iota_*(N)$ are two linearly independent, commuting transvections.
There exists an integer $m\geq 1$
such that $\alpha^m,\beta^m\in M\cap N$.
It follows that $T_{mv}=\iota_*(\alpha^m),T_{mw}=\iota_*(\beta^m)$ is the wished pair of linearly independent, commuting
transvections in $\iota_*(M)$.
\end{proof}

In the below lemma we prove the wished statement
for subvarieties of $\calA_g$.

\begin{lm}[Finding a pair of commuting transvections II]\label{lemma:two-twists}
Let $Z\subset\calA_g$ be an irreducible subvariety that meets the Jacobian locus,
and suppose that $\ol{Z}\cap\ol{\calJ}_g$
meets the locally-closed boundary stratum $\calA_{g-h}$ in a non-compact subset.
Then the image of $\pi_1(Z_{\sm})\rar  \Gamma_g$ contains
two linearly independent, commuting transvections.
\end{lm}
\begin{proof}
%
Since $\ol{Z}\cap\ol{\calJ}_g$ inside $\ol{\calA}_g$ meets the boundary stratum $\calA_{g-h}$ in a non-compact
subset,
the closure of $t^{-1}(Z)$ inside $\ol{\calM}_g$ meets 
the locally-closed boundary stratum $\delta^{\IRR}_h$ in a non-compact subset.
Thus, an irreducible component $Y$ of $t^{-1}(Z)$ satisfies hypotheses
(a) and (b) in Proposition \ref{prop:two-twists},
and so the image of $\pi_1(Y_{\sm})\rar \Gamma_g$ contains two linearly independent, commuting transvections.

Call $H$ the image of $\pi_1(Y_{\sm})\rar\pi_1(Z)$ and $K$ the image of $\pi_1(Z_{\sm})\rar\pi_1(Z)$.
Since $Y\rar Z$ is a finite map and $Y,Z$ are irreducible,
Corollary \ref{cor:image-smooth}(i) implies that $[H:K\cap H]<+\infty$.
By Lemma \ref{lemma:commensurable} applied to $N=H$ and $M=K$,
the image of $\pi_1(Z_{\sm})\rar  \Gamma_g$
contains two linearly independent, commuting transvections.
\end{proof}

As a consequence, we obtain our criterion for subvarieties
of $\calA_g(\Gamma)$.

\begin{cor}[Finding a pair of commuting transvections III]\label{cor2:two-twists}
Let $\Gamma$ be a finite-index subgroup of $\Gamma_g$,
let $p:\calA_g(\Gamma)\rar\calA_g$ be the natural projection
and let $Z\subset\calA_g(\Gamma)$ be an irreducible subvariety.
Suppose that the image $p(Z)$ meets the Jacobian locus inside $\calA_g$, and that
$\ol{p(Z)}\cap\ol{\calJ}_g$ intersects the boundary stratum
$\calA_{g-h}$ in a non-compact subset.
Then the image of $\pi_1(p(Z)_{\sm})\rar \Gamma_g$ contains
two linearly independent, commuting transvections.
\end{cor}

\begin{proof}
Observe that $p(Z)$ is irreducible, and
let $\iota:p(Z)\rar\calA_g$ be the inclusion and $\iota_*:\pi_1(p(Z))\rar\Gamma_g$ the induced homomorphism.
Moreover, denote by $H$ the image of $\pi_1(Z_{\sm})\rar\pi_1(p(Z))$ and
by $K$ the image of $\pi_1(p(Z)_{\sm})\rar\pi_1(p(Z))$.

By Lemma \ref{lemma:two-twists}, 
the subgroup $\iota_*(K)$
contains two linearly independent, commuting transvections.
Since $Z\rar p(Z)$ has finite fibers,
$H$ is a finite-index subgroup of $K$
by Corollary \ref{cor:image-smooth}(ii) in the case $Y=Z$.
Hence, $\iota_*(H)$ is a finite-index subgroup of $\iota_*(K)$,
and so it contains two linearly independent, commuting transvections
by Lemma \ref{lemma:commensurable} applied to $N=K$ and $M=H$.
\end{proof}




\bibliographystyle{amsalpha}

\end{document}